\newtheorem{proposition}{Proposition}
\newtheorem{lemma}{Lemma}
\newtheorem{definition}{Definition}
\newtheorem{remark}{Remark}[section]
\newcommand{\tcb}{\textcolor{blue}}
\newcommand{\bd}{\bm}
\newcommand{\mc}{\mathcal}
\renewcommand{\P}{\mathcal{P}}
\newcommand{\ul}{\underline}
\newcommand{\ol}{\overline}
\newcommand{\xx}{\bm{x}}
\newcommand{\rr}{\mathbb{R}}
\newcommand{\eqd}{:=}
\newcommand{\SW}{\text{S\hspace{-1pt}W }}
\newcommand{\nm}{_{nm}}
\newcommand{\mn}{_{mn}}
\newcommand{\N}{\mathcal{N}}
\newcommand{\oG}{\ol{G}}
\newcommand{\uG}{\ul{G}}
\newcommand{\GNE}{\text{GNE}}
\renewcommand{\L}{\mathcal{L}}
\title{Peer-to-Peer Electricity Market Analysis: From Variational to Generalized Nash Equilibrium}
\author{H\'el\`ene Le Cadre\thanks{VITO/EnergyVille research center, Thor scientific park, Genk, Belgium, email: helene.lecadre@energyville.be} \and Paulin Jacquot\thanks{EDF R\&D, OSIRIS, Inria, CMAP Ecole polytechnique, CNRS, France, email: paulin.jacquot@polytechnique.edu} \and Cheng Wan\thanks{EDF R\&D, OSIRIS, France \& RIIS, Shanghai University of Finance and Economics, email: cheng.wan.2005@polytechnique.org} \and Cl\'emence Alasseur\thanks{EDF R\&D and FIME, Energy Markets research center, France, email: clemence.alasseur@edf.fr, the author's research is part of the ANR project PACMAN (ANR-16-CE05-0027)}}
\date{\today}
\tikzset{
    subnodesIEEE/.pic={
       
\draw [name path= line1, line width=1mm, black ] (1.75,-8.5)-- (3.75,-8.5)  node[below ,midway] (anchor1) {} 
node[above ,left=0.8cm] (anchor1v2) {}
node[above ,left=0.4cm] (anchor1v5) {}
node[above ,left] (anchor1v7) {}
node[above ,right=0.2cm]  {1};

\draw [name path= line2, line width=1mm, black ] (8.1,-15.2) node[below , right=0.1cm] (anchor2) {}-- (11.7,-15.2) 
    node[above ,right=0.2cm]  {2}
   node[below ,left=1cm] (anchor2v1) {} 
      node[below ,left=0.2cm] (anchor2v3) {} 
            node[below ,midway] (anchor2v6) {} 
            node[below ,left=0.4cm] (anchor2v4) {}  ;

\draw [name path= line3, line width=1mm, black ] (13.9,-15.2) node[below] (anchor3) {} -- (16.75,-15.2) node[above ,right=0.2cm]  {3}
node[below ,left=1cm] (anchor3v2) {} 
node[below ,midway] (anchor3v4) {};

\draw [name path= line4, line width=1mm, black ] (14.1,-11.5)-- (16.4,-11.5) 
node[above ,right=0.2cm]  {4}
node[above ,left=0.7cm] (anchor4v2) {} 
node[above ,left=0.5cm] (anchor4v3) {}
node[above ,left=0.9cm] (anchor4v5) {}
node[above ,left=0.25cm] (anchor4v7) {}
node[above, left=0.2cm] (anchor4) {};

\draw [name path= line5, line width=1mm, black ] (8.8,-11.5)-- (10.8,-11.5)
node[above ,right=0.2cm]  {5} 
node[above ,left=0.7cm] (anchor5v1) {}
node[above ,left=0.4cm] (anchor5v2) {}
node[above ,left=0.2cm] (anchor5v4) {};

\draw [name path= line6, line width=1mm, black ] (8.8,-6.4)-- (10.8,-6.4)
node[above ,right=0.2cm]  {6}
node [above, left=0.15cm] (anchor6) {}
node[above, left=0.9cm] (anchor6v1) {} 
node[above, left=0.4cm] (anchor6v2) {}
node[above, left=0.3cm] (anchor6v11) {}
node[above, left=0.6cm] (anchor6v13) {};

\draw [name path= line7, line width=1mm, black ] (14.8,-8.2)-- (16.2,-8.2) 
node[above ,right=0.2cm]  {7}
node[above, left=0.2cm] (anchor7v4) {} 
node[above, left=0.5cm] (anchor7v1) {}
node[above, left=0.1cm] (anchor7v8) {};

\draw [name path= line8, line width=1mm, black ] (17.4,-7.8)-- (17.4,-6.3) 
node[above =0.2cm,]  {8}
node[above, midway] (anchor8v7) {}
node[above, left] (anchor8) {} ;

\draw [name path= line9, line width=1mm, black ] (14.1,-6)-- (16.1,-6) 
node[above ,right=0.2cm]  {9}
node[above, left=0.6cm] (anchor9){}
node[above, left=0.8cm] (anchor9v4) {}
node[above, left=0.5cm] (anchor9v7) {}
node[above, left=0.2cm] (anchor9v14) {}  ;

\draw [name path= line10, line width=1mm, black ] (13.3,-3.9)-- (14.7,-3.9)
node[above ,right=0.0cm]  {10}
node[above, left=0.6cm] (anchor10){}
node[above, left=0.2cm] (anchor10v9){}
node[above, midway] (anchor10v11) {};

\draw [name path= line11, line width=1mm, black ] (9.9,-3.9)-- (11.2,-3.9) 
node[above ,right=0.2cm]  {11}
node[above, left= 0.2cm] (anchor11v10) {}

;
\draw [name path= line12, line width=1mm, black ] (6.3,-3.9)-- (7.6,-3.9) 
node[above ,right=0.2cm]  {12}
node [above, midway] (anchor12) {} 
node [above, left=0.2cm] (anchor12v13) {}  ;

\draw [name path= line13, line width=1mm, black ] (8.6,-2.5)-- (10.6,-2.5)
node[above ,right=0.2cm]  {13}
node[above, midway] (anchor13){}
node[above, left=0.4cm] (anchor13v6) {} 
node[above, left=0.2cm] (anchor13v14) {} 
node[above, left=0.8cm] (anchor13v12) {} ;

\draw [name path= line14, line width=1mm, black ] (13.9,-2.5)-- (15.9,-2.5)
node[above ,right=0.2cm]  {14}
node[above, midway] (anchor14){}
node[above, left=0.2cm] (anchor14v9) {}
node[above, left=0.7cm] (anchor14v13) {} ;

\def\nd{0.3}
\def \idn{0.15} 
\node[draw,circle,fill=none,font=\small,line width=0.5pt, above= \nd cm of anchor1, node distance= 0.2cm] (G0) {G};

\draw (G0.south)-| (anchor1) ;

\node[draw,circle,fill=none,line width=0.5pt, below= \nd cm of anchor2] (c2) {C};

\draw (c2)-| (anchor2) ;
\node[draw,circle,fill=none,line width=0.5pt, right= \idn cm of c2] (g2) {G};
\draw (g2)--++(0,\nd cm) |- (anchor2v1) ;


\node[draw,circle,fill=none,line width=0.5pt, right= \idn cm of g2] (g4) {G};
\draw (g4)--++(0,\nd cm) |- (anchor2v1) ;

\node[draw,circle,fill=none,line width=0.5pt, below= \nd cm of anchor3] (c3) {C};
\draw (c3)-| (anchor3) ;
\node[draw,circle,fill=none,line width=0.5pt, right= \idn cm of c3] (g6) {G};
\draw (g6)--++(0,\nd cm) |- (anchor3v4) ;

\node[draw,circle,fill=none,line width=0.5pt, right= \idn cm of g6] (g7) {G};
\draw (g7)--++(0,\nd cm) |- (anchor3v4) ;

\node[draw,circle,fill=none,line width=0.5pt, below= \nd cm of anchor4] (c4) {C};
\draw (c4)--++(0,\nd cm) |- (anchor4) ;

\node[draw,circle,fill=none,line width=0.5pt, above= \nd cm of anchor5v1] (c5) {C};
\draw (c5.south)--++(0,\nd cm) |- (anchor5v1) ;

\node[draw,circle,fill=none,line width=0.5pt, right= \nd cm of anchor8] (g10) {G};
\draw (g10)--   (anchor8) ;

\node[draw,circle,fill=none,line width=0.5pt, below= \nd cm of g10] (g11) {G};

\draw (g11)-| (anchor8) ;

\node[draw,circle,fill=none,line width=0.5pt, above = \nd cm of anchor9] (c9) {C};
\draw (c9)--   (anchor9) ;
\node[draw,circle,fill=none,line width=0.5pt, above = \nd cm of anchor14] (c14) {C};
\draw (c14)--   (anchor14) ;
\node[draw,circle,fill=none,line width=0.5pt, below = \nd cm of anchor10] (c10) {C};
\draw (c10)--   (anchor10) ;

\node[draw,circle,fill=none,line width=0.5pt, below= \nd cm of anchor6] (g15) {G};

\draw (g15)-| (anchor6) ;
\node[draw,circle,fill=none,line width=0.5pt, left= \idn cm of g15] (c6) {C};
\draw (c6)-- ++ (0, 1) |- (anchor6) ;
\node[draw,circle,fill=none,line width=0.5pt, below= \nd cm of anchor11v10] (c11) {C};
\draw (c11)-| (anchor11v10) ;
\node[draw,circle,fill=none,line width=0.5pt, above= \nd cm of anchor12] (c12) {C};
\draw (c12.south)-| (anchor12) ;
\node[draw,circle,fill=none,line width=0.5pt, above= \nd cm of anchor13] (c13) {C};
\draw (c13.south)-| (anchor13) ;
    }}
\begin{document}

\maketitle  

\begin{abstract}
We consider a network of prosumers involved in peer-to-peer energy exchanges, with differentiation price preferences on the trades with their neighbors, and we analyze two market designs: \textbf{(i)} a centralized market, used as a benchmark, where a global market operator optimizes the flows (trades) between the nodes, local demand and flexibility activation to maximize the system overall social welfare; \textbf{(ii)} a distributed peer-to-peer market design where prosumers in local energy communities optimize selfishly their trades, demand, and flexibility activation.
We first characterize the solution of the peer-to-peer market as a Variational Equilibrium  and prove that the set of Variational Equilibria coincides with the set of social welfare optimal solutions of market design \textbf{(i)}. 
We give several results that help understanding the structure of the trades at an equilibrium or at the optimum. We characterize the impact of preferences on the network line congestion and renewable energy waste under both designs.
We provide a reduced example for which we give the set of all possible generalized equilibria, which enables to give an approximation of the price of anarchy. We provide a more realistic example which  relies on the IEEE 14-bus network, for which we can simulate the trades under different  preference prices. Our analysis shows in particular that the  preferences have a large impact on the structure of the trades, but that one equilibrium (variational) is optimal.
\end{abstract}

Keywords: OR in Energy, Peer-to-Peer Energy Trading, Preferences, Variational Equilibrium, Generalized Nash Equilibrium.

\section{Introduction} \label{sec:intro}

New regulations are restructuring electricity markets in order to build the grid of the future. Instead of a centralized market design where all the operations have been managed by a global  central market operator \cite{madani, schweppe, stoft}, 
new \emph{decentralized}  models emerge. These models  involve local energy communities which can trade energy, either  by the intermediate of a global market operator \cite{lecadreCEJOR2018}, or in a peer-to-peer setting \cite{parag, sousa}.
Peer-to-peer energy trading allows flexible energy trades between peers, where, for instance, local prosumers exchange between them energy surplus from multiple small-scale distributed energy resources (DERs)  \cite{liu2017, long2017}.

Significant value is brought to the power system by coordinating local renewable energy source (RES)-based generators and DERs to satisfy the demand of local energy communities, since it decreases the need for investment in conventional generations and transmission networks. Also, thanks to the decreasing feed-in-tariffs, using RES-based generations on site (e.g., at household level, within the microgrid) is more attractive than feeding it into the grid, because of the difference between electricity selling and buying prices \cite{long2017}. Peer-to-peer energy trading encourages the use of surplus energy within local energy communities, resulting in significant cost savings even for communities with moderate penetration of RES \cite{long2017}. 

In practice, the radial structure of the distribution grid calls for hierarchical market designs, involving transmission and distribution network operators \cite{lecadreEJOR2019}. Nevertheless, various degrees of coordination can be envisaged: full coordination organized by a global market operator (transmission network operator), bilateral contract networks \cite{morstyn2018}, fully decentralized market designs allowing peer-to-peer energy trading between the prosumers in a distributed fashion \cite{moret2018, sorin2018} or, still, within and between coalitions of prosumers, called community or hybrid peer-to-peer \cite{moret}. A community-based organization involves a community manager which organizes trades among the community and is in charge of the interactions with the rest of the market. A distributed market structure exists when the decentralized elements explicitly share, in a peer-to-peer fashion, local information, resulting in a system in which all the elements may not have access to the same level of information. This information asymmetry might create differences in valuations of the traded resource (e.g., price arbitrage) and result in market imperfections, implying that the prices associated with the bilateral trading of resource allocation between couples of agents do not coincide. This price gap can be interpreted as a bid-ask spread due to a lack of liquidity in the market \cite{oggioni}.

Energy exchange between production units and  local demand of energy communities are formulated as a symmetric assignment problem. Its solution relies on two main streams in the literature. The first stream deals with matching models which put in relation RES-based generators and consumers by the intermediate of a platform, with various consumers classes and different possible objective functions for the platform operator \cite{liu2017}. The second stream combines multi-agent modeling, as well as classical distributed optimization algorithms which are applied to solve the assignment problem in real-time \cite{moret2018, morstyn2018_multiclass, sorin2018}. Auctions theory can be used, in addition, to schedule the DER commitment in day-ahead. 

\subsection{Matching Models for Peer-to-Peer Energy Trading} 

In the energy sector, peer-to-peer energy trading is a novel paradigm of power system operation. There, prosumers provide their own energy from solar panels, storage technologies, demand response mechanisms, and they exchange energy with one another in a distributed fashion. Zhang et al. provide in \cite{zhang2016} an exhaustive list of projects and trails all around the world, which build on new innovative approaches for peer-to-peer energy trading. A large part of these projects rely on platforms, understood as two-sided markets, that match RES-based generators and consumers according to their preferences and locality aspects (e.g. Piclo in the UK, TransActive Grid in Brooklyn, US, Vandebron in the Netherlands, etc.). In the same vein, cloud-based virtual market places, which deal with excess generation within microgrids, are developed by PeerEnergyCloud and Smart Watts in Germany. Some other projects rely on local community
building for investment sharing in batteries, solar PV panels, etc., in exchange for bill reduction or a certain level of autonomy with respect to the global grid (e.g. Yeloha and Mosaic in the US, SonnenCommunity in Germany, etc.). How other components of the platform's design can influence the nature and the preference of the prosumers involved is also studied in the literature. Typical elements of the platform's design are: the impact of pricing mechanism (e.g. setting one common market price versus individual prices per transaction set -- for instance through auction design -- or per class of prosumers), the platform's objective (e.g. maximizing the social welfare versus maximizing the platform's benefit), the influence of the platform's commission per transaction. For example, in \cite{benjaafar}, the authors study the impact of the price of the goods exchanged on the level of collaboration and also on the level of ownership among participants. In \cite{fang}, the impact of different platform's objective functions is analyzed considering  a set of heterogeneous renters and  owners. Dynamic pricing for operations of the platform based on supply and demand ratio of shared RES-based generation is investigated in \cite{liu2017}. Peer-to-peer organizations are also a way to enable small and flexible actors to enter markets by lowering the entrance barrier  \cite{einav2015}. 

Platform design constitutes an active area of research in the literature on two-sided markets
\cite{einav2015, fang}. Three needs are identified for platform deployment. Firstly, it should help buyers and sellers find each other, while taking into account the heterogeneity in their preferences. This requires the platform to find a trade-off between low entry cost
and information retrieval from big, heterogeneous and dynamic information
flows. Buyers' and sellers' search can be performed in a \emph{centralized} (e.g. Amazon,
Uber), effective \emph{decentralized} (e.g. Airbnb, eBay),
or even \emph{fully distributed} (OpenBazaar, Arcade City) manner. Secondly, the platform must
set prices that balance demand and supply, and ensure that prices are set competitively
in a decentralized fashion. Finally, the platform ought to maintain trust in
the market, relying on reputation and feedback mechanisms \cite{foti2018}. Sometimes, supply
might be insufficient so that subsidies needs to be designed to encourage sharing on
the platform \cite{fang}.

\subsection{Distributed Optimization Approaches} 

Computational and communication bottlenecks have largely been alleviated by recent work on distributed and peer-to-peer optimization of large-scale optimal power 
flow \cite{engels2016, kraning2014, peng2014}. Mechanisms for the optimization of a common objective function by a decentralized system are known as decomposition-coordination methods \cite{palomar}. In such methods, a centralized (large-scale) optimization problem is typically split into small-size local optimization problems whose outputs are coordinated dynamically by a central agent (called ``master") so that the overall objective of the system becomes aligned (after a certain number of iterations) with the (large-scale) centralized optimization problem outcome. Following this stream, a consensus-based Alternating Direction Method of Multipliers (ADMM) algorithm is implemented in \cite{morstyn2018_multiclass, sorin2018, smartest} to approximate the optimal solution which maximizes the prosumers social welfare, in a peer-to-peer electricity market. Similar approaches relying on dual decomposition, which 
iteratively solves the problem in a distributed manner with limited information exchange, were implemented for energy trading between islanded microgrids in \cite{gregoratti2016, matamoros2012}. Two main drawbacks of these algorithmic approaches are listed in \cite{sorin2018}: first, they do not take into account the strategic behaviors of the prosumers; second, they are computationally limited, which might constitute a blocking point when studying large-scale peer-to-peer networks. The latter issue is overcome in \cite{moret2018} with an improved consensus algorithm. 

In addition, these distributed-optimization approaches enable incorporating heterogeneous energy preferences of individual prosumers in network management. The added value of multi-class prosumer energy management is evaluated in \cite{morstyn2018_multiclass} for a distribution network that has a ``green prosumer", a ``philanthropic prosumer" and a ``low-income household". Three energy classes are introduced to account for the prosumers' preferences: ``green energy", ``subsidized
energy" and ``grid energy". A platform agent is introduced to act as an auctioneer, allowing energy trading between the prosumers and the wholesale electricity market. The platform agent sets the price of each energy class in the distribution network. The tool of receding-horizon model predictive control is used to provide a real-time
implementation. Consumer preferences are also introduced in \cite{sorin2018} in the form of product differentiation prices. They can either be pushed centrally as dynamic and specific tax payments, or be used to better describe the utility of the consumers who are willing to pay for certain characteristics of trades.

\subsection{Privacy Issues} 

From the perspective of information and communication technology (ICT) , a fully decentralized market design provides a robust framework since, if one node in a local market is attacked or
in case of failures, the whole architecture should remain in place, while information could find other paths to circulate from one point to another, avoiding malicious nodes and corrupted paths. 

From an algorithmic point of view, the implementation of a fully distributed market design might be challenging, since it has to deal with far more complex communication mechanisms than the centralized market design. Efficient communication will allow collaboration among prosumers, so that energy produced by one can be utilized by another in the network. Multiple peer-to-peer communication architectures exist in the literature,
including structured, unstructured and hybrid ones. They are all based on common standards for the communication network operation, which are measured through latency, throughput, reliability and security \cite{jogunola2018}. In addition to the large size of the communication problem, privacy issues may also directly impact the market outcome. Indeed, if prosumers are allowed to keep some private information, then they might not have access to the same level of information, i.e. information asymmetry appears. Since the prosumers' make decisions based on the information at their disposal, such asymmetry can introduce bias in the market outcome. To avoid or, at least, to limit bias introduced in the market outcome while guaranteeing the optimum of the social welfare, various algorithms that preserve local market agents' privacy have been discussed in the literature. For example, the algorithms can require the agents to update no more than their dual variables -- e.g., local prices \cite{engels2016, sorin2018}. Of course, the efficiency of these algorithms depends on the level of privacy defined by the agents as well as which private information could be inferred from the released values.

\subsection{Contributions} 

The peer-to-peer structure  adopted in this paper is different from the approaches involving decomposition-coordination methods. Indeed, we assume that there is no central authority coordinating the exchanges (in quantity, price and information) between the nodes. Within this framework, strategic communication mechanisms can appear, and nodes have the possibility to self-organize into coalitions or local energy communities, as reviewed in \cite{tushar}. With such strategic behaviors, the equilibrium of the peer-to-peer market design might not coincide with the social welfare global optimum achieved with full coordination of the nodes by a ``master"  controlling all the information and decisions, as in \cite{wang2014} where the authors consider a noncooperative game involving storage units. 

In this paper, we first characterize the solution of a peer-to-peer electricity market as a Variational Equilibrium, assuming that all the agents have equal valuation of the price associated with the traded resource. We prove that the set of Variational Equilibria coincides with the set of social welfare optima. However, in a fully-distributed setting, it is very unlikely that each couple of agents coordinate on their valuations of the trading price. As a result, imperfections appear in the market, which we capture by considering Generalized Nash Equilibrium solutions as possible outcomes. 
We characterize analytically the impact of preferences on the network line congestion and energy waste, both under centralized and peer-to-peer market designs. Our results are illustrated in two test cases (a three node network with arbitrage opportunity and the standard IEEE-14 bus network). 
We evaluate the loss of efficiency caused by peer-to-peer market imperfections in the three nodes network, with the Price of Anarchy as a performance measure.
We also evaluate numerically the impact of the differentiation prices by computing the equilibria of our 14 nodes network under different price configurations.
Last, we quantify the impact of privacy on the social welfare at equilibrium by providing an analytic upper bound and evaluating it in our three nodes example. 

\vspace{0.5cm}
The paper is organized as follows. In Section~\ref{sec:model}, we introduce the model of the generalized noncooperative game we consider in this work, and we give our main assumptions.
In Section~\ref{sec:centralized_md}, the centralized market design \textbf{(i)} is formulated and its solutions characterized. We introduce the peer-to-peer market design \textbf{(ii)} in Section~\ref{sec:p2p_md}; its solutions are characterized in terms of Variational Equilibrium and Generalized Nash Equilibria in the presence of market incompleteness. Congestion analysis and performance measure based on the Price of Anarchy are also introduced. These solutions concepts are then applied to two test cases in Section~\ref{sec:test_cases}: a three node toy network and the IEEE 14-bus network. The impact of privacy is quantified in Section~\ref{sec:privacy}, and illustrated on the three node toy network. 

\section*{Notations}

We summarize the main notations used throughout the paper. Vectors and matrices are denoted by \textbf{bold} letters. 

\vspace{0.3cm}

\textbf{Sets}

\vspace{0.2cm}

\begin{tabular}{|c||c|}\hline
$\mathcal{N}$ & Set of $N$ nodes, each one of them being made of an agent (prosumer) \\\hline

$\Omega_n$ & Set of neighbors of $n$\\\hline

$\mathcal{D}_n$ & Agent $n$'s demand set \\\hline

$\mathcal{G}_n$ & Agent $n$'s flexibility activation set \\\hline

$\text{SOL}^{\text{GNEP}}$ & Set of GNE solutions of the peer-to-peer non-cooperative game \\\hline
\end{tabular}

\vspace{0.3cm}

\textbf{Variables}

\vspace{0.2cm}

\begin{tabular}{|c||c|}\hline
$D_n$ & Agent $n$'s demand \\\hline

$G_n$ & Agent $n$'s flexibility activation (micro-CHP, storage facilities, etc.) \\\hline

$\Delta G_n$ & Agent $n$'s random self-generation obtained from RES (solar PV panels) \\\hline

\end{tabular}

\begin{tabular}{|c||c|}\hline

$q_{mn}$ & Quantity exchanged between $n$ and $m$ in the direction from $m$ to $n$ \\\hline

$Q_n$ & Net import at node $n$ \\\hline

$\zeta_{nm}$ & Bilateral trade price between agent $n$ and $m$ \\\hline

$\lambda_n$ & Nodal price at node $n$ \\\hline

$\xi_{nm}$ & Congestion price between nodes $n$ and $m$ \\\hline

$\underline{\mu}_n,\overline{\mu}_n$ & Demand capacity constraint dual variables at node $n$ \\\hline

$\underline{\nu}_n,\overline{\nu}_n$ & Flexibility activation capacity constraint dual variables at node $n$ \\\hline

$\epsilon_{nm}^D, \epsilon_{nm}^G$ & Agent $n$'s biases in the estimation of $m$ demand and RES-based generation \\\hline
\end{tabular}

\vspace{0.3cm}

\textbf{Parameters}

\vspace{0.2cm}

\begin{tabular}{|c||c|}\hline
$\underline{D}_n$ & Lower-bound on demand capacity \\\hline

$\overline{D}_n$ & Upper-bound on demand capacity \\\hline

$\underline{G}_n$ & Lower-bound on flexibility activation capacity \\\hline

$\overline{G}_n$ & Upper-bound on flexibility activation capacity \\\hline

$D_n^{\star}$ & Agent $n$'s target demand \\\hline

$\kappa_{nm}$ & Equivalent interconnection capacity between node $n$ and node $m$ \\\hline

$a_n, b_n, d_n$ & Flexibility activation cost parameters \\\hline

$\tilde{a}_n, \tilde{b}_n$ & Consumer utility parameters \\\hline

$c_{mn}$ & Product differentiation price capturing agent $n$'s trading cost preferences \\\hline

$\delta_{nm}$ & Agent $m$ valuation of $\zeta_{nm}$ \\\hline

$\sigma_{nm}^D, \sigma_{nm}^G$ & Standard deviation of agent $n$'s error in demand and RES forecasts \\\hline
\end{tabular}

\vspace{0.3cm}

\textbf{Functions}

\vspace{0.2cm}

\begin{tabular}{|c||c|}\hline
$C_n$ & Agent $n$'s flexibility activation (production) cost \\\hline 

$U_n$ & Agent $n$'s usage benefit \\\hline
    
$\tilde{C}_n$ & Agent $n$'s total trading cost \\\hline
    
$\Pi_n$ & Agent $n$'s utility function \\\hline
    
$SW$ & Social welfare \\\hline

$F_n$ & Agent $n$'s forecast \\\hline
\end{tabular}

\section{Prosumers and Local Communities} \label{sec:model}

In this section, we define the generic framework of agent (prosumer) interactions, and a stylized representation of the underlying (distribution) graph. We formulate the local supply and demand balancing constraint that holds in each node. To formalize the two market designs \textbf{(i)} and \textbf{(ii)}, we introduce the costs, utility functions, social welfare, private information and main assumptions on which our model relies.

\subsection{Generic Framework}

Let $\mathcal{N}$ be a set of $N$ nodes, each of them representing an agent (prosumer), except the root node $0$ which is assumed to contain only conventional generation. The root node belongs to the set $\mathcal{N}$. It can trade energy with any other node in $\mathcal{N}$. Under this assumption, the distribution network is a radial graph, with the root node being the interface between the local energy communities and the transmission network. Figure~\ref{fig:graphExample}  illustrates such a graph structure.

\begin{figure}[htbp]
\begin{center}
\includegraphics[scale=0.45]{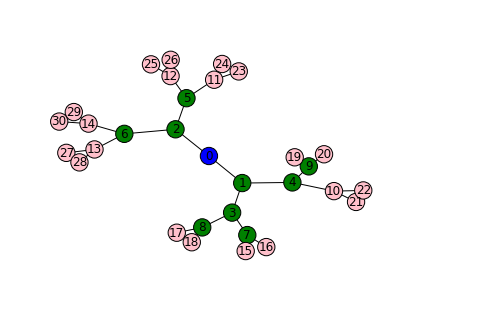}
\caption{\small{Example of a radial network. The root node at the interface of the distribution and transmission networks, can trade energy with any other node in the distribution network. In the distribution network, prosumer nodes organize in local energy communities, trading energy with neighbors inside their local community.}}
\label{fig:graphExample}
\end{center}
\end{figure}

Let $\Omega_n$ be the set of neighbors of $n$, with the structure of a communication network (local energy community). It does not necessarily reflect the grid constraints. As usual, we assume that $n \in \Omega_n$, for all $n \in \mathcal{N}$. In particular, $\Omega_0 := \mathcal{N}\setminus \{0\}$.

In each node $n$, we introduce $\mathcal{D}_n := \{ D_n \in \mathbb{R}_+ | \underline{D}_n \leq D_n \leq \overline{D}_n\}$ as agent $n$'s demand set, with $\underline{D}_n$ and $\overline{D}_n$ being the lower and upper-bounds on demand capacity.

In parallel to the demand-side, we define the self-generation-side by letting $\mathcal{G}_n := \{G_n \in \mathbb{R}_+ | \underline{G}_n \leq G_n \leq \overline{G}_n \}$ be agent $n$'s flexibility activation set, where $\underline{G}_n$ and $\overline{G}_n$ are the lower and upper-bounds on flexibility activation capacity.

The decision variables of each prosumer $n$ are her demand $D_n$, flexibility activation $G_n$, and the quantity exchanged between $n$ and $m$ in the direction from $m$ to $n$, $q_{mn}$, for all $m\in \Omega_n\setminus \{n\}$. If $q_{mn} \geq 0$, then $n$ buys $q_{mn}$ from $m$, otherwise ($q_{mn}<0$) $n$ sells $-q_{mn}$ to $m$. We impose an inequality on the trading reciprocity $q_{mn} \leq -q_{nm}$, meaning that the quantity that $n$ buys from $m$ is smaller than or equal to the quantity that $m$ sells to $n$. 

The difference between the sum of imports and the sum of exports in node $n$ is defined as the net import in that node: $Q_n : = \sum_{m \in \Omega_n} q_{mn}$. Furthermore, each line is constrained in capacity. Let $\kappa_{nm}\in [0 , +\infty[$ be the equivalent interconnection capacity between node $n$ and node $m$, such that $q_{nm} \leq \kappa_{nm}$, $\kappa_{nm} = \kappa_{mn}$ and $\overline{G}_n \leq \kappa_{nm}$. 

RES-based (solar PV panels) self-generation at each node $n$ is modeled as a random variable $\Delta G_n$. Its realization is exogenous to our model.

\subsection{Local Supply and Demand Balancing}\label{subsec:local_balancing}

Local supply and demand equilibrium leads to the following equality in each node $n$ in $\mathcal{N}$:
\begin{eqnarray}\label{eq:supply_demand_eq}
D_n &=& G_n + \Delta G_n + \sum_{m \in \Omega_n}  q_{mn}, \nonumber \\
&=& G_n + \Delta G_n + Q_n. 
\end{eqnarray}

Assuming perfect competition, a Market Operator (MO) maximizes the system social welfare,  defined as the sum of the utilities of all the agents in the system, under a set of operational and power-flow constraints, while checking that supply and demand balance each other at each node of the network. In nodal markets, allocative market efficiency can be achieved by setting (locational marginal) nodal price, $\lambda_n$, equal to the dual variable of the local supply and demand balancing equation \cite{sherali}.

In this paper, we consider a 
\emph{innovative decentralized} marker clearing, 
by comparison with the classical centralized approach, which is used for example in nodal markets. For that purpose, we introduce decentralization in agents' decision-making. This decentralization results firstly from the fact that demands, flexibility activation and trades are defined selfishly by each prosumer in the nodes; secondly from the fact that all the information regarding preferences and private information on target demands and RES-based generations is not available to all the nodes. The decentralized market clearing relies on a peer-to-peer market design, where each agent $n$ computes the Lagrangian variable associated with her (local) supply and demand balancing equation, using the information at her disposal. Dual variables $\lambda_n$ are kept private to agent $n$ and used to compute her bilateral trading prices.  

\subsection{Cost and Usage Benefit Functions}

Flexibility activation (production) cost in node $n$ is modeled as a quadratic function of local activated flexibility, using three positive parameters $a_n$, $b_n$ and $d_n$:
\begin{equation}\label{eq:prod_cost}
C_n(G_n) = \frac{1}{2} a_n G_n^2 + b_n G_n +d_n,
\end{equation}
with $-\frac{b_n}{a_n}\geq \underline{G}_n$.

We make the standard assumption  that self-generation occurs at zero marginal cost.

The usage benefit perceived by agent $n$ is modeled as a strictly concave function of node $n$ demand  \cite{fang}, using two positive parameters $\tilde{a}_n$, $\tilde{b}_n$ and a target demand defined exogenously by agent $n$:
\begin{equation}\label{eq:consumption_cost}
U_n(D_n) = -\tilde{a}_n(D_n-D_n^{\star})^2+\tilde{b}_n.
\end{equation}
 The quantity $-U_n(.)$ can also be considered as the consumption cost of agent $n$ \cite{sorin2018}. 
 As $U_n(.)$ captures a usage benefit, which is interpreted as the comfort perceived by agent $n$, we impose that it always remains non-negative, i.e., 
$\overline{D}_n - \sqrt{\frac{\tilde{b}_n}{\tilde{a}_n}} \leq D_n^{\star} \leq \underline{D}_n + \sqrt{\frac{\tilde{b}_n}{\tilde{a}_n}}$. 

In Section~\ref{sec:privacy}  only, we will assume that the maximum usage benefit is homogeneous among all agents (meaning that the prosumers differentiate only in the distribution of their usage benefit perception around her target demand value). In general, it is different between two prosumers, but will be assumed to be shared publicly. 

We consider that usage benefit vanishes in case of zero demand, i.e., $U_n(0)=0 \Leftrightarrow \tilde{a}_n = \frac{\tilde{b}_n}{(D_n^{\star})^2}, \forall n \in \mathcal{N}$. A fortiori, if the target demand $D_n^{\star}$ is known, coefficient $\tilde{a}_n$ can be inferred.   
 
In this work, we consider that prosumers have preferences on the possible trades with their neighbors. The preferences are modeled with  (product) differentiation prices \cite{sorin2018}: each   agent $n$ has a positive price $c_{nm}>0$ to buy energy to an agent $m$ in her neighborhood $\Omega_n$. The total trading cost function of agent $n$ is denoted by:
\begin{equation}\label{eq:cost_preference}
\tilde{C}_n(\bm{q}_n) = \sum_{m \in \Omega_n,m\neq n} c_{nm} q_{mn}.
\end{equation}
Parameters $c_{nm}$ can model taxes to encourage/refrain the development of certain technologies (micro-CHPs, storage, solar panels) in some nodes. They can also capture agents' preferences to pay regarding certain characteristics of trades (RES-based generation, location of the prosumer, transport distance, size of the prosumer, etc.). If $q_{mn}  >0$ (i.e., $n$ buys $q_{mn}$ to $n$) then $n$ has to pay the cost $c_{nm}q_{mn} >0$. Thus, the higher $c_{nm}$ is, the less interesting it is for $n$ to buy energy from $m$ but the more interesting it is for $n$ to sell energy to $m$. 
On the other side, if $q\mn <0$, then $n$ sends the energy $-q\mn$ and receives the value $-c\nm q\mn >0$ even if $m$ does not accept all this energy (i.e. $q\nm + q\mn < 0$). In that case the remaining power is injected in the network or wasted.

\subsection{Utility Function and Social Welfare}

Agent $n$'s utility function is defined as the difference between the usage benefit resulting from the consumption of $D_n$ energy unit and the sum of the flexibility activation and trading costs. Formally, it takes the form:
\begin{equation} \label{eq:profit}
\Pi_n(D_n, G_n,\bm{q}_n) = U_n(D_n) - C_n(G_n) - \tilde{C}_n(\bm{q}_n),
\end{equation}
where $\bm{q}_n=(q_{mn})_{m \in \Omega_n, m \neq n}$.

We introduce the social welfare as the sum of the utility functions of all the agents in $\mathcal{N}$:
\begin{equation} \label{eq:social_welfare}
SW(\bm{D}, \bm{G},\bm{q}) = \sum_{n \in \mathcal{N}} \Pi_n(D_n, G_n,\bm{q}_n).
\end{equation}

\subsection{Private Information and Assumptions}

There is private information at each node $n$ that can be associated with:
\begin{itemize}
\item $\Delta G_n$, local RES-based generation ;
\item $D_n^{\star}$, target demand ;
\item $C_n(.)$, flexibility activation cost function, more specifically parameters $a_n$, $b_n$, $d_n$ ;
\item $U_n(.)$, usage benefit function, more specifically parameters $\tilde{a}_n$, $\tilde{b}_n$ ;
\item $\tilde{C}_n(.)$, bilateral trade cost function, more specifically parameters $(c_{nm})_{m \in \mathcal{N}\setminus \{n\}}$ .
\end{itemize}

Throughout this article, we will make some assumptions regarding the information available to each agent. We summarize them below:

\begin{description}
\item[Assumption~1] Since the technologies (conventional units, solar PV panels, micro-CHPs, storage facilities, etc.) used by the agents are standardized, we assume that each agent $n$'s production cost $C_n(.)$ and parameters $a_n$, $b_n$, $d_n$ are publicly known by all the agents $m \in \mathcal{N}\setminus \{n\}$.

\item[Assumption~2] The product differentiation prices $(c_{n0})_{n}$, $(c_{0n})_{n}$ for any $n \in \mathcal{N} \setminus \{0\}$ are publicly known by all the agents. In case of taxes, they might be designed by the regulator to impact the energy mix.

\item[Assumption~3] The congestion prices $(\xi_{n0})_{n}$, $(\xi_{0n})_{n}$ for any node $n \in \mathcal{N} \setminus \{0\}$ on the interface lines between transmission network and distribution networks are determined by a dedicated market mechanism and publicly revealed to all the agents.

\end{description}

In a centralized market design, all the private information is reported to the Market Operator (MO). This means that the local target demands $(D_n^{\star})_{n \in \mathcal{N}}$ and RES-based generations $(\Delta G_n)_{n \in \mathcal{N}}$, are known by the MO. In contrast, in a peer-to-peer market design, $D_n^{\star}$ and $\Delta G_n$ are known only by agent $n$.

\section{Centralized Market Design}\label{sec:centralized_md}

The centralized market design is inspired from the existing pool-based markets. The global Market Operator (MO) maximizes the social welfare defined in Equation~(\ref{eq:social_welfare}) under demand capacity constraints (\ref{eq:CM1}) and flexibility activation capacity constraints (\ref{eq:CM2}) in each node, capacity trading flow constraints for each couple of nodes (\ref{eq:CM3}), trading reciprocity constraint (\ref{eq:CM4}) and supply-demand balancing (\ref{eq:CM5}) in each node:
\begin{subequations}
\begin{align}
\max_{\bm{D}, \bm{G}, \bm{q}} \hspace{1cm} &  SW(\bm{D}, \bm{G}, \bm{q}), \nonumber \\
s.t. \hspace{1cm} & \underline{D}_n \leq D_n \leq \overline{D}_n, \forall n \in \mathcal{N}, \hspace{4.5cm} &(\tcb{\underline{\mu}_n}, \tcb{\overline{\mu}_n}) \label{eq:CM1} \\
&\underline{G}_n \leq G_n \leq \overline{G}_n, \forall n \in \mathcal{N}, \hspace{4.5cm} & (\tcb{\underline{\nu}_n}, \tcb{\overline{\nu}_n}) \label{eq:CM2} \\
& q_{mn} \leq \kappa_{mn}, \forall m \in \Omega_n, m \neq n, \forall n \in \mathcal{N}, \hspace{2.6cm}  & (\tcb{\xi_{nm}}) \label{eq:CM3} \\
& q_{mn} \leq - q_{nm}, \forall m \in \Omega_n, m > n, \forall n \in \mathcal{N},& \hspace{2.4cm} (\tcb{{\zeta}_{nm}}) \label{eq:CM4} \\
& D_n = G_n + \Delta G_n + Q_n, \forall n \in \mathcal{N}. \hspace{3.2cm} & (\tcb{\lambda_n}) \label{eq:CM5}
\end{align}
\end{subequations}
\begin{remark}
The constraint \eqref{eq:CM4} is indexed by $m>n $ so that the constraint is considered only once.
\end{remark}

Dual variables are denoted in blue font between brackets at the right of the corresponding constraints. Some of the dual variables can be interpreted as shadow prices, with classical interpretations in the energy economics literature. In the remainder, $\xi_{nm}$ will be interpreted as the shadow price (congestion price) associated with capacity trading flow constraint \eqref{eq:CM3} between nodes $n$ and $m$; $\zeta_{nm}$ will be understood as the bilateral trade price offered by $n$ to $m$ associated with the trading reciprocity constraint \eqref{eq:CM4}; while $\lambda_n$ is the nodal price associated with the supply and demand balancing constraint in node $n$ \eqref{eq:CM5}, as discussed in Subsection~\ref{subsec:local_balancing}.  

The Social Welfare function is concave as the sum of concave functions defined on a convex feasibility set. Indeed, the feasibility set is obtained as Cartesian product of convex sets. We can compute the Lagrangian function associated with the standard constrained optimization problem of social welfare maximization under constraints (\ref{eq:CM1})-(\ref{eq:CM5}):
\begin{equation}
\begin{split}
&\mathcal{L}(\bd{D},\bd{G},\bd{Q},\bd{\mu},\bd{\nu},\bd{\xi},\bd{\zeta},\bd{\lambda}) = -SW(\bd{D},\bd{G},\bd{q})+\sum_{n\in\mathcal{N}}\underline{\mu}_n(\underline{D}_n-D_n) \\
+&\sum_{n\in\mathcal{N}}\overline{\mu}_n(D_n-\overline{D}_n) +\sum_{n\in \mathcal{N}}\underline{\nu}_n(\underline{G}_n-G_n)+\sum_{n\in \mathcal{N}}\overline{\nu}_n(G_n-\overline{G}_n)  \\
+&\sum_{n\in\mathcal{N}}\sum_{m\in\Omega_n,m\neq n}\xi_{nm} (q_{mn}-\kappa_{mn}) +\sum_{n\in\mathcal{N}}\sum_{m \in \Omega_n, m > n}\zeta_{nm}(q_{mn}+q_{nm}) \\
+&\sum_{n\in\mathcal{N}}\lambda_n\Big(D_n-G_n-\Delta G_n-Q_n\Big). \label{eq:CMLagrangian}
\end{split}
\end{equation}

To determine the solution of the centralized market design optimization problem, we compute KKT conditions associated with Lagrangian function (\ref{eq:CMLagrangian}). Taking the derivative of the Lagrangian function (\ref{eq:CMLagrangian}) with respect to $D_n$, $G_n$, $q_{mn}$, for all $n$ in $\mathcal{N}$ and all $m\in\Omega_n, m\neq n$, the stationarity conditions write down as follows:
\begin{subequations}
\begin{align}
\dfrac{\partial \L}{\partial D_n}=0 \Leftrightarrow \ &2 \tilde{a}_n(D_n-D_n^{\star})-\underline{\mu}_n+\overline{\mu}_n+\lambda_n = 0\, , \;\quad \forall n \in \mathcal{N}\, , \label{eq:KKT1} \\
\dfrac{\partial \L}{\partial G_n}=0 \Leftrightarrow \ &a_n G_n + b_n - \underline{\nu}_n+\overline{\nu}_n-\lambda_n = 0\, , \,\quad \quad\quad  \forall n \in \mathcal{N}\, , \label{eq:KKT2} \\
\dfrac{\partial \L}{\partial q\mn}=0 \Leftrightarrow \ & c_{nm}+\xi_{nm}+\zeta_{nm}-\lambda_n = 0,\quad\qquad \qquad \forall m \in \Omega_n, m \neq n, \forall n \in \mathcal{N}\, , \label{eq:KKT3}
\end{align}
\end{subequations}
where, for $m < n$, $\zeta_{nm}$ is defined as equal to $ \zeta_{mn}$.

From Equation~(\ref{eq:KKT3}), we infer that the nodal price at $n$ can be expressed analytically as the sum of the node product differentiation prices regarding the other prosumers in her neighborhood, the congestion constraint dual variable from Equation~(\ref{eq:CM3}) and the bilateral trade prices:
\begin{equation}
\lambda_n = c_{nm} + \xi_{nm} + \zeta_{nm} , \quad \forall m \in \Omega_n, m \neq n,\quad  \forall n \in \mathcal{N}\, . \label{eq:zetaopt}
\end{equation}

The complementarity constraints\footnote{A complementarity constraint enforces that two variables are complementary to each other, i.e., for two scalar variables $x,y$: $xy=0$, $x\geq 0$, $y \geq 0$. This condition is often expressed more compactly as: $0\leq x \perp y \geq 0$.} take the following form:
\begin{subequations}
\begin{align}
0 \leq \underline{\mu}_n &\perp D_n-\underline{D}_n \geq 0, \quad \forall n \in \mathcal{N}\, , \label{eq:cc1} \\
0 \leq \overline{\mu}_n &\perp \overline{D}_n-D_n  \geq 0, \quad \forall n \in \mathcal{N}\, , \label{eq:cc2} \\
0 \leq \underline{\nu}_n &\perp G_n-\underline{G}_n \geq 0, \quad \forall n \in \mathcal{N}\, , \label{eq:cc3} \\
0 \leq \overline{\nu}_n &\perp \overline{G}_n-G_n\geq 0, \quad \forall n \in \mathcal{N}\, , \label{eq:cc4} \\
0 \leq \xi_{nm} &\perp \kappa_{mn} - q_{mn}\geq 0, \quad \forall m \in \Omega_n, m \neq n, \forall n \in \mathcal{N}\, , \label{eq:cc5} \\
0 \leq \zeta_{nm} &\perp -q_{mn}-q_{nm}\geq 0, \quad \forall m \in \Omega_n, m>n, \forall n \in \mc{N}\, . \label{eq:cc6}
\end{align}
\end{subequations}

From Equation~(\ref{eq:KKT3}), we infer, for any couple of nodes $n \in \mathcal{N}, m \in \Omega_n, m > n$, that: 
\begin{equation} \label{eq:zetanm} 
\zeta_{nm} = \lambda_n - c_{nm} - \xi_{nm} = \lambda_m - c_{mn} - \xi_{mn}\, ,
\end{equation}
Subtracting those two last members in (\ref{eq:zetanm}), we infer that:
\begin{equation}
c_{nm} - c_{mn} + \xi_{nm} - \xi_{mn} = \lambda_n - \lambda_m, \forall m \in \Omega_n, m \neq n, \forall n \in \mathcal{N}\, .\label{eq:diffc}
\end{equation}

From Equations~(\ref{eq:KKT1}) and (\ref{eq:KKT2}), we infer that, at the optimum, for each node $n$:
\begin{align}
D_n =& D_n^{\star} - \frac{1}{2\tilde{a}_n}\Big( \lambda_n + (\overline{\mu}_n - \underline{\mu}_n)\Big)\, ,  \label{eq:Dopt}\\
G_n =& -\frac{b_n}{a_n}+ \frac{1}{a_n}\Big(\lambda_n - (\overline{\nu}_n-\underline{\nu}_n)\Big)\, . \label{eq:Gopt}
\end{align}

Substituting Equations~(\ref{eq:Dopt}) and (\ref{eq:Gopt}) in the local demand and supply balance Equation~(\ref{eq:CM5}), we infer that the net import at node $n$ can be expressed as a linear function of the nodal price:
\begin{equation}\label{eq:Qn}
Q_n = \Big( D_n^{\star}-\frac{1}{2 \tilde{a}_n}(\overline{\mu}_n-\underline{\mu}_n)+\frac{b_n}{a_n}+\frac{1}{a_n}(\overline{\nu}_n-\underline{\nu}_n)\Big) - \left(\frac{1}{2\tilde{a}_n}+\frac{1}{a_n}\right)\lambda_n - \Delta G_n\, . 
\end{equation}

The results are summarized in the following proposition.

\begin{proposition}\label{prop:optimum}
At the optimum, the demand, flexibility activation and net import at each node $n$ can be expressed as linear functions of the nodal price at that node, given by Equations \eqref{eq:Dopt}, \eqref{eq:Gopt}, and \eqref{eq:Qn}.
\end{proposition}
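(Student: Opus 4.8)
The plan is to obtain the three displayed formulas directly from the Karush--Kuhn--Tucker (KKT) optimality conditions of the social welfare maximization problem \eqref{eq:CM1}--\eqref{eq:CM5}, which have essentially already been written down in \eqref{eq:KKT1}--\eqref{eq:KKT3} and \eqref{eq:cc1}--\eqref{eq:cc6}. First I would record the structural facts that make these conditions applicable: the objective $SW$ is concave (it is a sum of the strictly concave $U_n$, the concave $-C_n$, and the linear $-\tilde C_n$), and every constraint in \eqref{eq:CM1}--\eqref{eq:CM5} is affine, so the feasible set is a nonempty polyhedron. Hence the affine constraint qualification holds, and a feasible triple $(\bd D,\bd G,\bd q)$ is a global maximizer if and only if there exist multipliers $\ul\mu_n,\ol\mu_n,\ul\nu_n,\ol\nu_n,\xi_{nm}\geq 0$ and free $\zeta_{nm},\lambda_n$ satisfying the stationarity conditions coming from the Lagrangian \eqref{eq:CMLagrangian} together with the complementarity conditions. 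This reduces the proposition to solving the stationarity equations.

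Second, I would differentiate \eqref{eq:CMLagrangian} with respect to $D_n$: the only $D_n$-dependent terms are $-U_n(D_n)$ and the penalty terms carrying $\ul\mu_n,\ol\mu_n,\lambda_n$, and since $U_n'(D_n)=-2\tilde a_n(D_n-D_n^{\star})$ this yields exactly \eqref{eq:KKT1}; solving that linear equation for $D_n$ gives \eqref{eq:Dopt}. The step for $G_n$ is identical in structure: $-(-C_n(G_n))$ contributes $C_n'(G_n)=a_nG_n+b_n$, the remaining $G_n$-terms carry $\ul\nu_n,\ol\nu_n,\lambda_n$, so stationarity is \eqref{eq:KKT2}, which solved for $G_n$ gives \eqref{eq:Gopt}.

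Third, I would substitute \eqref{eq:Dopt} and \eqref{eq:Gopt} into the local supply--demand balance \eqref{eq:CM5} rewritten as $Q_n=D_n-G_n-\Delta G_n$, then group the $\lambda_n$-free terms into the constant part and collect the coefficient of $\lambda_n$ as $-\left(\tfrac{1}{2\tilde a_n}+\tfrac{1}{a_n}\right)$; this produces \eqref{eq:Qn} and shows $Q_n$ is affine in $\lambda_n$, completing the proof.

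There is no genuinely hard step here; the calculation is routine linear algebra. The one point that deserves explicit care is the logical status of the KKT system, i.e. verifying the constraint qualification so that \eqref{eq:KKT1}--\eqref{eq:KKT3} hold at \emph{every} optimum (not merely at KKT stationary points) and, conversely, that any solution of the KKT system is a global optimum by concavity. A secondary remark worth stating is that the three identities express $D_n,G_n,Q_n$ as affine functions of $\lambda_n$ with the box-constraint multiplier differences $\ol\mu_n-\ul\mu_n$ and $\ol\nu_n-\ul\nu_n$ treated as parameters; in the regime where the demand and flexibility bounds in \eqref{eq:CM1}--\eqref{eq:CM2} are slack, these differences vanish by \eqref{eq:cc1}--\eqref{eq:cc4}, so the dependence on $\lambda_n$ is purely affine with no residual parameters.
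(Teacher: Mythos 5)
Your proposal is correct and follows essentially the same route as the paper, which derives \eqref{eq:Dopt}--\eqref{eq:Gopt} by solving the stationarity conditions \eqref{eq:KKT1}--\eqref{eq:KKT2} of the Lagrangian \eqref{eq:CMLagrangian} and obtains \eqref{eq:Qn} by substitution into the balance constraint \eqref{eq:CM5}; the proposition merely summarizes that computation. Your added remarks on the affine constraint qualification (so that the KKT system is both necessary and sufficient at every optimum) and on the role of the multiplier differences $\ol{\mu}_n-\ul{\mu}_n$, $\ol{\nu}_n-\ul{\nu}_n$ as parameters are sound refinements that the paper leaves implicit.
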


The total sum of the net imports at all nodes should be negative or null, i.e., $\sum_{n\in\mc{N}}Q_n\leq 0$. From the supply-demand balancing \eqref{eq:CM5}, this is equivalent to $\sum_{n\in\mc{N}}(D_n-G_n)\leq\sum_{n\in\mc{N}}\Delta G_n$. A strict inequality would lead to a situation where a part of the energy generation is wasted. That is not acceptable. To avoid that situation, the RES-based generation should be limited and the demand capacities large enough. Note that the sizing of the prosumers' capacities and RES-based generation possible clipping strategies are out of the scope of this work. This result is formalized in the proposition below.   

\begin{proposition}\label{prop:no_waste}
A necessary condition for no energy waste is that there is at least one prosumer $n$ in $\mc{N}$ whose capacities and RES-based generation are such that $\overline{D}_n-\underline{G}_n\geq\Delta G_n$.
\end{proposition}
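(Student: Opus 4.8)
The plan is to prove the statement by contraposition, exploiting only the feasibility box constraints and the supply--demand balance. As already observed in the paragraph preceding the statement, ``no energy waste'' must be read as $\sum_{n\in\mc{N}}Q_n=0$ (a strict inequality $\sum_{n\in\mc{N}}Q_n<0$ being precisely what ``waste'' means), and, using the nodal balance $Q_n=D_n-G_n-\Delta G_n$ from \eqref{eq:CM5}, this is equivalent to the identity $\sum_{n\in\mc{N}}(D_n-G_n)=\sum_{n\in\mc{N}}\Delta G_n$. So I would take as the starting point a feasible allocation $(\bm D,\bm G,\bm q)$ satisfying this equality.

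Next I would bound the left-hand side termwise. At any feasible point the constraints \eqref{eq:CM1} and \eqref{eq:CM2} give $D_n\leq\overline{D}_n$ and $G_n\geq\underline{G}_n$, hence $D_n-G_n\leq\overline{D}_n-\underline{G}_n$ for every $n\in\mc{N}$. Summing over the finite (and nonempty, since it contains the root node $0$) set $\mc{N}$ yields $\sum_{n\in\mc{N}}\Delta G_n=\sum_{n\in\mc{N}}(D_n-G_n)\leq\sum_{n\in\mc{N}}(\overline{D}_n-\underline{G}_n)$.

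Now suppose, for contradiction, that the necessary condition fails, i.e. that $\overline{D}_n-\underline{G}_n<\Delta G_n$ holds for \emph{every} $n\in\mc{N}$. Summing these strict inequalities over the nonempty finite set $\mc{N}$ gives $\sum_{n\in\mc{N}}(\overline{D}_n-\underline{G}_n)<\sum_{n\in\mc{N}}\Delta G_n$, which contradicts the displayed inequality just obtained. Therefore there must exist at least one node $n\in\mc{N}$ with $\overline{D}_n-\underline{G}_n\geq\Delta G_n$, which is the claimed necessary condition. Equivalently and more directly: if $\overline{D}_n-\underline{G}_n<\Delta G_n$ at every node, then $Q_n=D_n-G_n-\Delta G_n\leq\overline{D}_n-\underline{G}_n-\Delta G_n<0$ at every node, so $\sum_{n\in\mc{N}}Q_n<0$ and waste is unavoidable.

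I do not expect any real obstacle here; the argument is a one-line application of the box constraints and the conservation identity. The only points requiring a little care are conceptual rather than technical: fixing the precise meaning of ``no waste'' as the equality $\sum_{n\in\mc{N}}Q_n=0$, and noting that $\mc{N}\neq\emptyset$ so that a sum of strict inequalities over $\mc{N}$ remains strict.
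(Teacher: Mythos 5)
Your proposal is correct and follows essentially the same route as the paper: both use the box constraints \eqref{eq:CM1}--\eqref{eq:CM2} to bound $Q_n\leq\overline{D}_n-\underline{G}_n-\Delta G_n$, observe that failure of the condition at every node forces $\sum_{n\in\mc{N}}Q_n<0$, and conclude by contraposition against the no-waste identity $\sum_{n\in\mc{N}}Q_n=0$. The summed-inequality variant you give first is just a repackaging of the same argument, so there is nothing to add.
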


\begin{proof}
By combining \eqref{eq:CM1} and \eqref{eq:CM2}, we obtain $\underline{D}_n-\overline{G}_n \leq D_n - G_n \leq \overline{D}_n-\underline{G}_n$. Subtracting $\Delta G_n$ in each part of the inequalities and applying \eqref{eq:CM5}, we get $\underline{D}_n-\overline{G}_n-\Delta G_n \leq Q_n \leq \overline{D}_n-\underline{G}_n-\Delta G_n$. Then, $\overline{D}_n-\underline{G}_n-\Delta G_n<0$ implies that $Q_n<0$, i.e., there are more exports than imports from $n$. If $\overline{D}_n-\underline{G}_n-\Delta G_n<0$, for all $n \in \mc{N}$ then, $\sum_{n\in\mc{N}}Q_n<0$. No energy waste is equivalent to $\sum_{n\in\mc{N}}Q_n=0$. For this equality to hold, it is necessary that there exists at least one prosumer $n$ in $\mc{N}$ such that $\overline{D}_n-\underline{G}_n\geq \Delta G_n$.
\end{proof}

In practice, this means that the prosumer should size their capacities such that the difference between their upper-bound on demand capacity and lower-bound on flexibility activation capacity is larger than their RES-based generation. However, the previous proposition is a necessary condition.

The following proposition gives a sufficient condition on the locational marginal prices $(\lambda_n)_n$ for having  no waste at optimality:

\begin{proposition}\label{prop:noWasteMarginalPrices}
At the optimum, if for any prosumer node $m$, for any  node $n_0$ such that there exists a non congested path  $(n_0,n_1,\dots,n_p=m)$ from $n_0$ to $m$ such that $\lambda_m > c_{n_0,m_0}+  \sum_{k=0}^{p-1} c_{n_k}-c_{n_{k+1}}$, where $m_0 \in \Omega_{n_0}$, then there is no energy waste at $n_0$ in the trade with $m_0$ (that is: $q_{n_0,m_0}+q_{m_0,n_0}=0$). 
In particular:
\begin{itemize}
    \item if users have symmetric preferences $c\nm=c\mn$, there is no congestion and there exists $m$ such that $\lambda_{m} > c_{n_0,m_0}$, then there is no energy waste at $n_0$ in the trade with $m_0$ ;
    \item for $m= n_0$, if $\lambda_{n_0} > c_{n_0,m_0}$, then there is no waste at $n_0$ in the trade with $m_0$, which can be directly inferred by the complementarity condition  \eqref{eq:cc6} and \eqref{eq:KKT3}.
\end{itemize}
\end{proposition}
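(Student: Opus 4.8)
The plan is to reduce the no-waste statement $q_{n_0,m_0}+q_{m_0,n_0}=0$ to the strict positivity of the bilateral-trade multiplier $\zeta_{n_0 m_0}$, and then to extract that positivity from the stationarity condition \eqref{eq:KKT3} after telescoping the price-gap identity \eqref{eq:diffc} along the non-congested path. First I would invoke the complementarity condition \eqref{eq:cc6} for the pair $\{n_0,m_0\}$: since $0\le\zeta_{n_0 m_0}\perp -q_{n_0,m_0}-q_{m_0,n_0}\ge 0$, it is enough to show $\zeta_{n_0 m_0}>0$. Rewriting \eqref{eq:KKT3} for the variable $q_{m_0 n_0}$ gives $\zeta_{n_0 m_0}=\lambda_{n_0}-c_{n_0 m_0}-\xi_{n_0 m_0}$, so the whole proof reduces to establishing $\lambda_{n_0}>c_{n_0 m_0}+\xi_{n_0 m_0}$.

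The second step is the propagation of the nodal price along the path. On each edge $(n_k,n_{k+1})$ of the path, non-congestion means $\xi_{n_k n_{k+1}}=\xi_{n_{k+1}n_k}=0$ by \eqref{eq:cc5}, so \eqref{eq:diffc} collapses to $\lambda_{n_k}-\lambda_{n_{k+1}}=c_{n_k n_{k+1}}-c_{n_{k+1}n_k}$; summing over $k=0,\dots,p-1$ (with $n_p=m$) telescopes to $\lambda_{n_0}=\lambda_m+\sum_{k=0}^{p-1}\big(c_{n_k n_{k+1}}-c_{n_{k+1}n_k}\big)$. Substituting the hypothesis on $\lambda_m$ then yields $\lambda_{n_0}>c_{n_0 m_0}$ (matching the sign of the stated sum to \eqref{eq:diffc}), and together with $\xi_{n_0 m_0}\ge 0$ — which is an equality whenever the edge $(n_0,m_0)$ itself carries no congestion — we get $\zeta_{n_0 m_0}>0$, hence no waste on that trade.

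For the two bullets: if preferences are symmetric and no line is congested, \eqref{eq:diffc} forces $\lambda$ to be constant along the connected path, so the requirement becomes $\lambda_m>c_{n_0 m_0}$ with $\lambda_m=\lambda_{n_0}$ and the argument above applies; and for $m=n_0$ the path is trivial, the sum is empty, and $\lambda_{n_0}>c_{n_0 m_0}$ feeds directly into \eqref{eq:KKT3}–\eqref{eq:cc6}. I expect the delicate points to be purely bookkeeping: keeping the sign convention of the telescoped differentiation-price sum consistent with \eqref{eq:diffc}, and — in the general (asymmetric, possibly partially congested) statement — handling the congestion dual $\xi_{n_0 m_0}$ on the very edge whose waste is being bounded, which the clean implication $\lambda_{n_0}>c_{n_0 m_0}\Rightarrow\zeta_{n_0 m_0}>0$ silently takes to vanish (as it does in both special cases).
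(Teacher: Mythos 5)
Your proof is correct, but it takes a genuinely different route from the paper's. The paper argues in the primal: assuming waste at $n_0$ (in the form $q_{m_0,n_0}<0$, i.e.\ $n_0$ rejects energy), it constructs an explicit feasible perturbation that pushes $\varepsilon$ units from $m_0$ into $n_0$ and routes them along the non-congested path to $m$, absorbing them there by decreasing $G_m$ (or, if $G_m=\ul{G}_m$, increasing $D_m$); the resulting first-order change in $SW$ is $\varepsilon\big(\lambda_m-c_{n_0,m_0}+\sum_{i}(c_{n_i,n_{i+1}}-c_{n_{i+1},n_i})\big)>0$, contradicting optimality. You instead work in the dual: complementary slackness \eqref{eq:cc6} reduces everything to $\zeta_{n_0 m_0}>0$, and the telescoping of \eqref{eq:diffc} along the non-congested path shows that the paper's improvement margin is exactly $\lambda_{n_0}-c_{n_0,m_0}=\zeta_{n_0 m_0}+\xi_{n_0 m_0}$ — so the two proofs are certifying positivity of the same quantity, yours via the multipliers, the paper's via an improving direction. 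Your route is in fact the natural extension of the argument the paper itself invokes for the $m=n_0$ bullet, and it buys you two things: you avoid the paper's case split on $G_m>\ul{G}_m$ versus $G_m=\ul{G}_m$ (the value of $\lambda_m$ enters only through \eqref{eq:diffc}, not through a physical absorption of the perturbed flow at $m$), and you sidestep the feasibility bookkeeping of the $\varepsilon$-rerouting. Your resolution of the sign ambiguity in the stated sum is the one consistent with both \eqref{eq:diffc} and the paper's own computation.

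The one loose end you flag — the congestion dual $\xi_{n_0 m_0}$ on the traded edge — deserves to be closed rather than left as a caveat, and the fix is one line: argue by contradiction, so that waste gives $\zeta_{n_0 m_0}=0$ and hence, by \eqref{eq:KKT3} and your inequality $\lambda_{n_0}>c_{n_0,m_0}$, forces $\xi_{n_0 m_0}>0$ and thus $q_{m_0,n_0}=\kappa_{m_0 n_0}\geq 0$ by \eqref{eq:cc5}; in the waste configuration the paper actually treats (and implicitly defines), namely $q_{m_0,n_0}<0$, this is immediately contradictory, so $\xi_{n_0 m_0}=0$ there and your implication closes. The residual configuration ($q_{m_0,n_0}=\kappa_{m_0 n_0}\geq0$ with $q_{n_0,m_0}<-\kappa_{m_0 n_0}$) is excluded by neither your argument nor the paper's, so you are not losing any generality relative to the published proof.
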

\begin{proof}
Suppose on the contrary that there is some energy waste at $n_0$: there exists some $m_0$ such that $q_{n_0,m_0}+q_{m_0,n_0}<0$ and $q_{m_0,n_0}<0$ (i.e. $n_0$ rejects 
energy). In the case where $G_m > \ul{G}_m$,  Consider the infinitesimal transformation to the trades and production:
\begin{equation} \label{eq:changeTradesWaste} 
\begin{split}
    & q_{n_i,n_{i+1}} \leftarrow q_{n_i,n_{i+1}}+\varepsilon  , \quad \quad  q_{n_{i+1},n_i} \leftarrow q_{n_{i+1},n_i}-\varepsilon, \ \ \forall i \in \{0,\dots, p-1 \},  \\
     & \ q_{m_0,n_0} \leftarrow q_{m_0,n_0}+ \varepsilon \ , \ \ \qquad G_m \leftarrow G_m - \varepsilon \ .
     \end{split}
\end{equation}
Then, for $\varepsilon$ small enough, all constraints are still satisfied and the variations in $\SW$ has the same sign as:
\begin{equation*}
    \lambda_m -c_{n_0,m_0} + \sum_{i=0}^{p-1} ( c_{n_i,n_{i+1}}-c_{n_{i+1},n_i}) >0 \ .
\end{equation*}
Hence, we can strictly increase $\SW$, which contradicts the optimality. In the case where $G_m = \ul{G}_m$, then we necessarily have $D_m <\ol{D}_m$ (otherwise $\lambda_n = -2 \tilde{a}_n (\ol{D}_n-D^*_n)-\ol{\mu}_n <0 $ which is impossible from \eqref{eq:KKT3}), and we can strictly increase $D_m$ instead of decreasing $G_m$  in \eqref{eq:changeTradesWaste}, leading to the same contradiction.
\end{proof}
\begin{remark}
From the previous proposition, we see that even if there is no excess in the renewable production, i.e. $\sum_n \Delta G_n < \sum_n D^*_n$, we can still have some energy waste if the trades preference prices $(c\nm)$ are large enough.
\end{remark}

Hence, assuming no energy waste, the total sum of the net imports in all nodes should vanish, which implies the following relation: 
\begin{align}
&\sum_{n\in\mathcal{N}}Q_n = 0 \nonumber \\
&\Leftrightarrow \sum_{n \in \mathcal{N}}\left(\frac{1}{2\tilde{a}_n}+\frac{1}{a_n}\right)\lambda_n  = \sum_{n \in \mathcal{N}}\Big(D_n^{\star}-\frac{1}{2\tilde{a}_n}(\overline{\mu}_n-\underline{\mu}_n)+\frac{b_n}{a_n}+\frac{1}{a_n}(\overline{\nu}_n-\underline{\nu}_n)-\Delta G_n\Big)\, , \label{eq:lambda}
\end{align}
using Equation~(\ref{eq:Qn}). 

From Equation~(\ref{eq:diffc}), we infer that the nodal price at node $n$ is a linear function of the nodal price at the root node, product differentiation and congestion prices with all the other nodes in $\mathcal{N}$:
\begin{equation}\label{eq:lambdan}
\lambda_n = c_{n0} - c_{0n} + \xi_{n0}-\xi_{0n} + \lambda_0,\quad \forall n \in \Omega_0\, .
\end{equation}

Substituting Equation~(\ref{eq:lambdan}) in Equation~(\ref{eq:lambda}), we infer the closed form expression of the nodal price at the root node:
\begin{align}
\lambda_0 \sum_{n \in \mathcal{N}}\left(\frac{1}{2\tilde{a}_n}+\frac{1}{a_n}\right) =& \sum_{n \in \mathcal{N}}\Big( D_n^{\star}-\frac{1}{2\tilde{a}_n}(\overline{\mu}_n - \underline{\mu}_n)+\frac{b_n}{a_n}+\frac{1}{a_n}(\overline{\nu}_n-\underline{\nu}_n)-\Delta G_n\Big) \nonumber \\
-&\sum_{n \in \Omega_0}\left(\frac{1}{2\tilde{a}_n}+\frac{1}{a_n}\right)\big( c_{n0}-c_{0n}+\xi_{n0}-\xi_{0n}\big)\ .\label{eq:lambda0}
\end{align}

From Equations~(\ref{eq:lambdan}) and (\ref{eq:lambda0}), assuming that $(c_{n0})_{n}$, $(c_{0n})_{n}$, $(\xi_{n0})_{n}$, $(\xi_{0n})_{n}$ are known, the MO can iteratively compute all the $(\lambda_n)_{n \in \mathcal{N}}$. Note that $\underline{\bm{\mu}},\overline{\bm{\mu}}$ and $\underline{\bm{\nu}},\overline{\bm{\nu}}$ are determined by the MO when optimizing $\bm{D}$ and $\bm{G}$. Once computed by the MO, the nodal prices are announced to all the agents $n \in \mathcal{N}$. Then, to determine the optimal bilateral trading prices, each agent $n$ has to refer to Equation~(\ref{eq:zetanm}), which gives the bilateral trading prices as linear functions of the nodal price and congestion price. The results are summarized in the following proposition:

\begin{proposition}\label{prop:optimumPrice}
Assuming no energy waste and knowing $(c_{n0})_{n}$, $(c_{0n})_{n}$, $(\xi_{n0})_{n}$, $(\xi_{0n})_{n}$, the MO computes the nodal price at the root node by Equation~\eqref{eq:lambda0}. The nodal prices in all the other nodes of the distribution network can be inferred from $\lambda_0$ according to Equation~\eqref{eq:lambdan}. Then, for each node $n \in \mathcal{N}$, bilateral trading prices can be computed for any node $m \in \Omega_n, n \neq m$ by Equation~\eqref{eq:zetanm} provided congestion price $(\xi_{nm})_{m>n,m\in\Omega_n}$ is known\footnote{Two assumptions can be made on the determination of the congestion prices: first, they are determined exogenously while checking the complementarity constraint \eqref{eq:cc5}; second, they are determined through a market for (distribution) capacity line transmission. This second assumption enables the MO to complete the market. It will be discussed later in the paper.}.
\end{proposition}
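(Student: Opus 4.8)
The plan is to treat Proposition~\ref{prop:optimumPrice} for what it is: a constructive corollary that re-packages the KKT system already derived (equations~\eqref{eq:KKT1}--\eqref{eq:KKT3}, \eqref{eq:zetanm}, \eqref{eq:lambdan}, \eqref{eq:lambda0}) into an explicit recovery scheme for the prices. So the proof has two parts: first argue that an optimal primal--dual point exists and satisfies those relations, then check that the chain of substitutions producing $\lambda_0$, then $(\lambda_n)_n$, then $(\zeta_{nm})$ is well defined (in particular invertible at the step where $\lambda_0$ is isolated).

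First I would record existence and necessity of the KKT conditions. The feasible set of \eqref{eq:CM1}--\eqref{eq:CM5} is a polytope: it is nonempty under the sizing hypotheses, and it is bounded because $D_n\in[\underline D_n,\overline D_n]$, $G_n\in[\underline G_n,\overline G_n]$, while \eqref{eq:CM3} together with \eqref{eq:CM4} force $-\kappa_{nm}\le q_{mn}\le\kappa_{mn}$. Since $SW$ is concave (strictly concave in $\bm D$), a maximizer exists; and since all constraints are affine, the KKT stationarity conditions \eqref{eq:KKT1}--\eqref{eq:KKT3} and the complementarity conditions \eqref{eq:cc1}--\eqref{eq:cc6} are necessary at that maximizer. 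Fix such a maximizer together with its multipliers $\underline{\bm\mu},\overline{\bm\mu},\underline{\bm\nu},\overline{\bm\nu},\bm\xi,\bm\zeta,\bm\lambda$.

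Then I would simply run the substitutions exactly in the order of the text preceding the statement. Writing \eqref{eq:KKT3} for the ordered pairs $(0,n)$ and $(n,0)$ with $n\in\Omega_0=\mathcal N\setminus\{0\}$ gives \eqref{eq:zetanm}, and subtracting the two expressions gives \eqref{eq:lambdan}. Solving \eqref{eq:KKT1} and \eqref{eq:KKT2} for $D_n$ and $G_n$ yields \eqref{eq:Dopt} and \eqref{eq:Gopt}; inserting these into the balance \eqref{eq:CM5} yields \eqref{eq:Qn}, i.e. $Q_n$ affine in $\lambda_n$. The no-waste hypothesis is, by definition, $\sum_{n\in\mathcal N}Q_n=0$, which upon using \eqref{eq:Qn} is \eqref{eq:lambda}; substituting \eqref{eq:lambdan} for each $n\in\Omega_0$ into \eqref{eq:lambda} collapses it to a single scalar equation in $\lambda_0$ whose coefficient $\sum_{n\in\mathcal N}\bigl(\tfrac{1}{2\tilde a_n}+\tfrac{1}{a_n}\bigr)$ is strictly positive because $a_n,\tilde a_n>0$; hence it is uniquely solvable, giving \eqref{eq:lambda0}. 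Equation~\eqref{eq:lambdan} then recovers every $\lambda_n$, and \eqref{eq:zetanm} every $\zeta_{nm}$, provided the interior congestion prices $\xi_{nm}$ are available. I would also spell out the data actually consumed by this scheme: the publicly known $(c_{n0})_n,(c_{0n})_n,(\xi_{n0})_n,(\xi_{0n})_n$ (Assumptions~2--3), the cost and utility parameters, the target demands $(D_n^\star)_n$ and RES realizations $(\Delta G_n)_n$ (reported to the MO in design~\textbf{(i)}), and the box multipliers $\underline{\bm\mu},\overline{\bm\mu},\underline{\bm\nu},\overline{\bm\nu}$, which the MO obtains when it solves for $\bm D,\bm G$.

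There is no genuine mathematical difficulty here — the content is entirely in the assembly — but two points deserve to be handled cleanly rather than glossed over, and these are where I would spend the words. The first is the mild circularity of \eqref{eq:lambda0}: its right-hand side still contains $\overline\mu_n-\underline\mu_n$ and $\overline\nu_n-\underline\nu_n$, which are themselves part of the optimal solution, so the statement should be read as a consistency characterization and an iterative recovery procedure (compute the box multipliers while optimizing $\bm D,\bm G$, then back out $\lambda_0$, then $(\lambda_n)_n$, then $(\zeta_{nm})$) rather than a self-contained closed form. The second is the status of the interior congestion prices $\xi_{nm}$ for $m\in\Omega_n\setminus\{0,n\}$: on uncongested lines \eqref{eq:cc5} forces $\xi_{nm}=0$, but on congested lines \eqref{eq:cc5} leaves $\xi_{nm}$ free, so for \eqref{eq:zetanm} to determine $\zeta_{nm}$ one must either restrict to the uncongested case or fix these prices through the auxiliary line-capacity market referred to in the footnote; I would state this explicitly so that the conclusion is unambiguous.
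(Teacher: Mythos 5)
Your proposal is correct and follows essentially the same route as the paper, which offers no separate proof but simply summarizes the chain of substitutions (KKT stationarity \eqref{eq:KKT1}--\eqref{eq:KKT3} $\to$ \eqref{eq:Dopt}, \eqref{eq:Gopt}, \eqref{eq:Qn} $\to$ no-waste condition \eqref{eq:lambda} $\to$ \eqref{eq:lambdan}, \eqref{eq:lambda0} $\to$ \eqref{eq:zetanm}) derived in the text preceding the statement. Your two added caveats (the box multipliers appearing on the right of \eqref{eq:lambda0}, and the need to fix the interior congestion prices $\xi_{nm}$) are consistent with what the paper itself acknowledges in the surrounding text and footnote; the only nitpick is that boundedness of the trades does not follow from \eqref{eq:CM3}--\eqref{eq:CM4} alone but from combining the upper bounds $q_{mn}\leq\kappa_{mn}$ with the balance constraint \eqref{eq:CM5}, which is immaterial to the conclusion.
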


If all agents reveal their product differentiation prices $(c_{n0})_n$ to the MO and all the congestion prices $(\xi_{n0})_n$, $(\xi_{0n})_n$ in the lines involving the root node are known (or rationally anticipated), then the MO can compute all the nodal prices $(\lambda_n)_{n\in\mc{N}}$ from $\lambda_0$.
\bigskip

We now want to make the link between the market and the state of the distribution grid. In the following proposition, we show that the distribution grid lines become congested if there are ``cycles'' in the preferences as explained below.
\newcommand{\tC}{\tilde{C}}
\newcommand{\tq}{\tilde{q}}
\begin{proposition} \label{prop:fullCapaTrades}
Suppose that the matrix $\tC := (c\nm-c\mn)\nm $ has a strictly negative cycle of length $k>2$, i.e. there is a sequence of distinct indices $(n_i)_{1\leq i\leq k}$ such that $   \sum_{1\leq i \leq k} \tC_{n_i,n_{i+1}} < 0 $, where $n_{k+1}:=n_1$.
Then, at an optimal centralized solution, there is a trade opposed to the cycle made at full capacity, i.e. there exists $i\in \{1,\dots,k\}$ such that $q_{n_{i+1},n_i} = \kappa_{n_{i+1},n_i}$.

Symmetrically, if there is a strictly positive cycle  $(n_i)_{1\leq i\leq k}$ such that $   \sum_{1\leq i \leq k} \tC_{n_i,n_{i+1}} > 0 $, then at an optimal centralized solution, there is a trade in the direction of the cycle made at full capacity, i.e. there exists $i\in \{1,\dots,k\}$ such that $q_{n_i,n_{i+1}} = \kappa_{n_i,n_{i+1}} $.
\end{proposition}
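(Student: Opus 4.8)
The plan is to argue by contradiction, using only the stationarity/complementarity part of the KKT system, specifically relation \eqref{eq:diffc} together with the congestion complementarity condition \eqref{eq:cc5}. Since the social welfare maximization problem is convex, every optimal centralized solution satisfies \eqref{eq:KKT1}--\eqref{eq:cc6}, so \eqref{eq:diffc} is available; written for two consecutive nodes of the cycle it gives
\[
\tilde{C}_{n_i,n_{i+1}} = c_{n_i,n_{i+1}} - c_{n_{i+1},n_i} = (\lambda_{n_i} - \lambda_{n_{i+1}}) - \xi_{n_i,n_{i+1}} + \xi_{n_{i+1},n_i}.
\]
Here the cycle $(n_i)_{1\le i\le k}$ is understood to be formed of edges of the communication graph, so that the trades $q_{n_i,n_{i+1}}$, $q_{n_{i+1},n_i}$ and the associated congestion duals are well defined; the hypothesis $k>2$ simply discards the trivial length-two ``cycles'', for which the left-hand side sums to $0$ identically.

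First I would suppose, for contradiction, that no trade opposed to the cycle is saturated, i.e. $q_{n_{i+1},n_i} < \kappa_{n_{i+1},n_i}$ for every $i\in\{1,\dots,k\}$. Reading \eqref{eq:cc5} with $(n,m)$ replaced by $(n_i,n_{i+1})$ gives $0\le\xi_{n_i,n_{i+1}}\perp \kappa_{n_{i+1},n_i}-q_{n_{i+1},n_i}\ge 0$, so strict slackness forces $\xi_{n_i,n_{i+1}}=0$ for all $i$. Substituting into the displayed identity yields $\tilde{C}_{n_i,n_{i+1}} = (\lambda_{n_i}-\lambda_{n_{i+1}}) + \xi_{n_{i+1},n_i}$ for each $i$. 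Summing over $i=1,\dots,k$, the nodal-price differences telescope around the closed walk $n_1\to n_2\to\cdots\to n_k\to n_1$ and cancel, leaving $\sum_{i=1}^k \tilde{C}_{n_i,n_{i+1}} = \sum_{i=1}^k \xi_{n_{i+1},n_i}\ge 0$ by nonnegativity of congestion prices — contradicting $\sum_{i=1}^k \tilde{C}_{n_i,n_{i+1}}<0$. The symmetric statement is identical up to orientation: assuming $q_{n_i,n_{i+1}}<\kappa_{n_i,n_{i+1}}$ for all $i$ forces (via \eqref{eq:cc5} with $(n,m)=(n_{i+1},n_i)$) $\xi_{n_{i+1},n_i}=0$ for all $i$, whence $\sum_i \tilde{C}_{n_i,n_{i+1}} = -\sum_i \xi_{n_i,n_{i+1}}\le 0$, contradicting strict positivity.

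I do not expect a genuine obstacle here: the argument is short and self-contained. The only point requiring care is the index bookkeeping — correctly matching each congestion dual $\xi_{nm}$ to the capacity constraint $q_{mn}\le\kappa_{mn}$ it comes from in \eqref{eq:CM3}/\eqref{eq:cc5}, and keeping track of which direction of each edge is ``along'' versus ``opposed to'' the cycle so that the telescoping and the sign of the final inequality come out right. Notably the result uses neither the no-waste assumption nor Propositions~\ref{prop:optimum}--\ref{prop:noWasteMarginalPrices}; it is a direct consequence of the KKT stationarity and complementarity conditions.
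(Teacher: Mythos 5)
Your argument is correct, but it proves the statement by a genuinely different route than the paper. The paper's proof is purely primal: assuming all trades opposed to the cycle are at least $\epsilon$ below capacity, it explicitly shifts $\epsilon$ units of trade around the cycle ($q_{n_{i+1},n_i}\mapsto q_{n_{i+1},n_i}+\epsilon$, $q_{n_i,n_{i+1}}\mapsto q_{n_i,n_{i+1}}-\epsilon$), checks that all constraints and net imports $Q_n$ are preserved, and computes that the social welfare increases by $-\epsilon\sum_i \tilde{C}_{n_i,n_{i+1}}>0$, contradicting optimality. You instead work on the dual side: you invoke \eqref{eq:diffc}, kill the relevant congestion multipliers via complementary slackness \eqref{eq:cc5}, and telescope the nodal prices around the cycle to get $\sum_i \tilde{C}_{n_i,n_{i+1}}=\sum_i \xi_{n_{i+1},n_i}\ge 0$. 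Both are valid (the constraints are affine, so KKT multipliers exist at every optimum and \eqref{eq:diffc} is available), and your index bookkeeping for $\xi_{nm}\perp(\kappa_{mn}-q_{mn})$ is right in both directions. The trade-offs: your dual argument is shorter and makes the ``no arbitrage at an interior optimum'' interpretation crisp, but it leans on the specifically centralized identity \eqref{eq:diffc}, which requires $\zeta_{nm}=\zeta_{mn}$ and therefore does not transfer to the game-theoretic analogue (\Cref{prop:fullCapaTradesGame}), where the two valuations of a trade constraint may differ; the paper's perturbation argument exhibits the improving cyclic exchange explicitly and is the template that adapts to that later proposition. One presentational point: you should state (as the paper implicitly does) that consecutive nodes of the cycle are neighbors, i.e.\ $n_{i+1}\in\Omega_{n_i}$, since otherwise neither the trades nor the multipliers in \eqref{eq:diffc} are defined — you do flag this, which is good.
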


\begin{proof}[Proof of \Cref{prop:fullCapaTrades}]
We prove the first part of the proposition as the second is symmetric.

Consider the trades $(q\nm)\nm$ at an optimal solution and suppose on the contrary that there is $\epsilon >0$ such that, for each $i\in \{1,\dots,k\}$, we have $q_{n_{i+1},n_i} \leq \kappa_{n_{i+1},n_i} - \epsilon$.

Then consider the same solution with trades $(\tq\nm)\nm$ defined as follows: for each  $i\in \{1,\dots,k\}$, let $\tq_{n_{i+1},n_i}:= q_{n_{i+1},n_i}+ \epsilon$ and $\tq_{n_i,n_{i+1}}:= q_{n_i,n_{i+1}}- \epsilon$, while $\tq\nm=q\nm$ otherwise. 
Then all constraints are still feasible because, for each $i$, $\sum_{m \neq n_i} q_{m,n_i} =Q_n - \epsilon + \epsilon =Q_n$. Besides, by definition of $\tq$, we still have $\tq\mn=-\tq\nm$ for any $m >n$. Moreover, if we denote by $\SW$ the social welfare of the previous solution $(q\nm)\nm$, the social welfare of this new solution is:
\begin{align*}
\widetilde{\SW} & =\SW + \sum_n \sum_{m \neq n} c\nm (q\mn-\tq\mn) \\
&= \SW + \sum_{1\leq i \leq k}  \Big( c_{n_i,n_{i+1}} (q_{n_{i+1},n_i}-\tq_{n_{i+1},n_i}) +c_{n_i,n_{i-1}} (q_{n_{i-1},n_i}-\tq_{n_{i-1},n_i})  \Big) \\
&= \SW + \sum_{1\leq i \leq k} \epsilon \left( c_{n_i,n_{i-1}} - c_{n_i,n_{i+1}} \right) =  \SW  - \epsilon \sum_{1\leq i \leq k}  \tC_{n_i,n_{i+1}}  > \SW \ ,
\end{align*}
which contradicts the fact that $\SW$ is maximal.
\end{proof}

\begin{remark} The  property  stated  by  \Cref{prop:fullCapaTrades}  shows  that  the  lines  become  congested  if  there  is a strictly positive or negative cycle in the matrix $\tC$. In practice, a central MO should  try  to  avoid such an outcome, since the  congested  lines  are  unavailable  in  case  of unplanned real  need (outages, peak demand).  The  existence  of  a positive  cycle  in $\tC$  means  that  there  is  an  ``arbitrage''  opportunity  in  the  network. In other words, one  can  strictly increase the social welfare by doing an exchange of power quantities.
We can make the assumption that this kind of opportunities do not exist in practice, since they should vanish quickly in a liquid market.

From the  point  of  view  from  mechanism  design,  we  might  also  prevent  this  kind  of  cycling  behavior
by  adding  a  transaction  fee  (e.g.
 $\tau \times | q\mn |$ with $\tau >0 $)  on  the  trades,  regardless  they  are  positive  or negative.
 \end{remark}
 \Cref{subsec:3nodeEx} shows an example where there is a cycling trade that is purely due to arbitrage opportunities because of the preferences.

\section{Peer-to-Peer Market Design}\label{sec:p2p_md}

The centralized market design is used, in this section, as a benchmark against which we test the performance of a fully distributed approach relying on peer-to-peer energy trading. We first start by defining in Subsection~\ref{sec:def} the solution concepts that we will use to analyze the outcome of the fully distributed market design. Then, various results are introduced to characterize the relations between these sets of solutions. Congestion issues and performance measures are discussed in Subsection~\ref{sec:perf}. 

\newcommand{\ta}{\tilde{a} } 
\newcommand{\nmi}{_{nm} } 
\newcommand{\mni}{_{mn} } 
\newcommand{\omu}{\overline{\mu}}
\newcommand{\umu}{\underline{\mu}}

\subsection{General Nash Equilibrium and Variational Equilibrium}\label{sec:def}
In the peer-to-peer setting, each agent $n \in \mathcal{N}$ determines, by herself, her demand, flexibility activation and bilateral trades with other agents in her local energy community under constraints on demand, flexibility activation and transmission capacity so as to maximize her utility. 
A trade between two agents in a local energy community supposes that these two have decided on a certain quantity to be sent from one side and received by the other side. Therefore, there must be an ``agreement'' or trade constraint between each pair of agents in a local community, which couples their respective decisions. As a result, although the utility of a prosumer depends only on her own decisions, some of these decisions, such as the quantity she agrees to trade with all the other prosumers in her neighborhood, have an impact on the set of feasible actions of her neighbors. In the same way, her feasible actions are determined by the actions of her neighbors. 

Formally, each agent in node $n \in \mathcal{N}$ solves the following optimization problem:
\begin{subequations}
\label{problemUsern}
\begin{align}
\max_{D_n, G_n, (q_{mn})_{m \in \Omega_n, m\neq n}} \hspace{1cm} & \Pi_n\Big(D_n, G_n, \bm{q}_n\Big), \label{eq:p2p_objectif} \\
s.t. \hspace{1cm} & \underline{D}_n \leq D_n \leq \overline{D}_n, \hspace{3.9cm} & (\tcb{\underline{\mu}_n}, \tcb{\overline{\mu}_n}) \label{cd:capaD} \\
& \underline{G}_n \leq G_n \leq \overline{G}_n, \hspace{4.cm} & (\tcb{\underline{\nu}_n}, \tcb{\overline{\nu}_n}) \label{cd:capaG} \\
 & q_{mn} \leq \kappa_{mn}, \forall m \in \Omega_n, m \neq n, \hspace{1.7cm} & (\tcb{\xi_{nm}}) \label{cd:capacity}\\
& q_{mn} \leq - q_{nm}, \forall m \in \Omega_n, m \neq n, \hspace{1.7cm} & (\tcb{\zeta_{nm}}) \label{cd:transaction}\\
& D_{n} = G_{n} + \Delta G_n + Q_{n} , \hspace{2.4cm} & (\tcb{\lambda_n}) \label{cd:game_D}
\end{align}
\end{subequations}
where $\bm{q}_n= (q_{mn})_{m\in\Omega_n}$ are the trading decisions of agent $n$.

Hence, the peer-to-peer setting leads to $N$ optimization problems, one for each agent $n \in \mathcal{N}$,  with \emph{individual constraints} on demand 
\eqref{cd:capaD}, flexibility activation \eqref{cd:capaG}, trade capacity \eqref{cd:capacity}, supply and demand balancing \eqref{cd:game_D}; as well as \emph{coupling constraints} \eqref{cd:transaction} that ensure the reciprocity of the trades. 

The Lagrangian function associated with optimization problem \eqref{eq:p2p_objectif} under constraints \eqref{cd:capaD}-\eqref{cd:game_D}, writes down as follows:
\begin{align*}
\mathcal{L}_n&(D_n, G_n,\bd{q}_n,\bm{\mu}_n,\bm{\nu}_n,\bm{\xi}_{n},\bm{\zeta}_{n}, \lambda_n)\\
=& \tilde{a}_n(D_n  -D^*_n)^2-\tilde{b}_n+\tfrac{1}{2}a_n G^2_n+b_n G_n +d_n + \sum_{m \in \Omega_n, m \neq n}c_{nm}q_{mn} \\
& +\underline{\mu}_n (\underline{D}_n-D_n) +\overline{\mu}_n (D_n  - \overline{D}_n) + \underline{\nu}_n(\underline{G}_n-G_n)+\overline{\nu}_n(G_n-\overline{G}_n)+\sum_{m \in \Omega_n, m \neq n} \xi_{nm}(q_{mn}-\kappa_{mn})\\
& +\sum_{m \in \Omega_n, m \neq n}\zeta_{nm}(q_{mn}+q_{nm})
+\lambda_n (D_n - G_n - \Delta G_n - \sum_{m\in\Omega_n, m\neq n} q_{mn}).
\end{align*}

For each agent $n$, the first order stationarity conditions are the same as \eqref{eq:KKT1}-\eqref{eq:KKT3}, and  the complementarity constraints are the same as \eqref{eq:cc1}-\eqref{eq:cc6}, except that \eqref{eq:cc6} is indexed by all $(m,n)$ with $m \neq n$ and that $\zeta\nm$ is not necessarily equal to $\zeta\mn$. Let this condition system be denoted by $K\!K\!T_n$ for each $n\in\mathcal{N}$. 

As the problem given by \eqref{problemUsern} is convex, $K\!K\!T_n$ are necessary and sufficient conditions for a vector $(D_n,G_n,\bm{q}_n)$ to be an optimal solution of \eqref{problemUsern}.

\begin{remark} \label{rm:gameWithoutGamingObjectives}
In Equation~\eqref{problemUsern}, $\Pi_n$ depends on the variables of player $n$ only, and not on the variables of the other players. A consequence is that the social welfare function is decomposable: $SW(D,G,\bm{q}) = \sum_n \Pi_n\big(D_n, G_n, \bm{q}_n\big)$. Therefore, without the existence of the coupling transaction constraint \eqref{cd:transaction}, the minimization of SW is equivalent to the minimization of each individual objective function $\Pi_n$. 
We will see that this equivalence between social optimizer and equilibria also happens for the so-called Variational Equilibria. 
\end{remark}

A common adopted equilibrium notion that generalizes Nash Equilibria in the presence of coupling constraints is the notion of Generalized Nash Equilibrium (GNE)  \cite{harker1991}

\begin{definition}[Generalized Nash Equilibrium \cite{facchinei2007}] \label{def:GNE}
A Generalized Nash Equilibrium of the game defined by the maximization problems \eqref{problemUsern} with coupling constraints, is a vector $(D_n,G_n,\bm{q}_n)_n$ that solves the maximization problems \eqref{problemUsern} or, equivalently, a vector $(D_n,G_n,\bm{q}_n)_n$ such that $(D_n,G_n,\bm{q}_n)$ solves the system $K\!K\!T_n$ for each $n$.
\end{definition}

The constraint \eqref{cd:transaction}, $q_{mn}\leq -q_{nm}$, written both in the problem of $n$ and in that of $m \neq n$ leads to the same inequality, but is associated to the multiplier $\zeta_{nm}$ in the problem of $n$ and to $\zeta_{mn}$ in the problem of $m$. 
In this paper, we consider two scenarios for the allocation of the resources represented in this coupling constraints:
\begin{itemize}[leftmargin=0.5cm,itemindent=2cm]
\item[\textbf{Scenario (i)}]  A market allocates the resources associated with \eqref{cd:transaction} through a single price system, therefore leading to the determination of one price for each constraint: $\zeta_{nm} = \zeta_{mn}$.

\item[\textbf{Scenario (ii)}] There does not exist any market to determine the price system associated with \eqref{cd:transaction}. Hence, two prosumers $n$, $m$ might attribute different evaluations of the same transaction $q_{mn}\leq -q_{nm}$ or, equivalently, the same dual variables to the trade constraint \eqref{cd:transaction} between $n$ and $m$. This \emph{can} lead to different prices $\zeta_{nm} \neq \zeta_{mn}$ for agents $n$ and $m$.  
\end{itemize}

The two scenarios have implications on the market organization. Let us discuss them one after another.

\smallskip

\textbf{Scenario (i)} corresponds to a complete market, where the common resources are shared in an efficient way. It suggests that all constraints are traded at a single price, which reflects the common valuation of each product from all agents.  The associated solution concept is that of Variational Equilibrium \cite{harker1991}, a refinement of Generalized Nash Equilibrium, where we ask for more symmetry: the Lagrangian multipliers associated to a constraint shared by several players have to be equal from one player to another. Note that a natural way to complete the market would be to introduce a market for (distribution) capacity line transmission, enabling the determination of congestion prices $(\xi_{nm})_{n,m}$. A similar idea was proposed by Oggioni et al. in \cite{oggioni} at the transmission level for a subproblem of market coupling. 

\begin{definition}[Variational Equilibrium \cite{facchinei2007}]
A Variational Equilibrium  of the game defined by \eqref{problemUsern} is a solution $(D_n,G_n,\bm{q}_n)_n$ that solves the maximization problems \eqref{problemUsern} or, equivalently, a vector $(D_n,G_n,\bm{q}_n)_n$ such that $(D_n,G_n,\bm{q}_n)$ solves the system $K\!K\!T_n$ for each $n$ and, in addition, such that the Lagrangian multipliers associated to the coupling constraints \eqref{cd:transaction} are equal, i.e.:
\begin{equation}
\zeta_{nm}=\zeta_{mn}, \ \forall n \in \mc{N}, \forall m \in \Omega_n, m \neq n \ .
\end{equation}
\end{definition}

 The term ``variational'' refers to the variational inequality problem associated to such an equilibrium: indeed, if we define the set of admissible solutions as:
 \begin{equation}
 \mc{R} \eqd \{ \xx= (D_n,G_n,\bm{q}_n)_n \ | \eqref{cd:capaD}-\eqref{cd:game_D} \text{ hold for each } n \in \mc{N}  \} \ . 
 \end{equation}
 then $\hat{\xx} \in \mc{R}$ is a Variational Equilibrium if, and only if, it is a solution of (cf. \cite{facchinei2007}):
 \begin{equation}
 \left\langle \sum_n \nabla  \Pi_n (\hat{\xx}_n), \ \xx - \hat{\xx}  \right\rangle \leq 0 , \ \forall \xx \in \mc{R} \ . \label{eq:VIofVE}
 \end{equation}
 A remarkable fact is that  Variational Equilibria exist under mild conditions \cite{harker1991,rosen1965}, even if the additional equality conditions on the multipliers seem restrictive.

We can observe, following Remark~\ref{rm:gameWithoutGamingObjectives}, that Variational Equilibria are defined by exactly the same KKT system than the social welfare maximizer (or equivalently as the solution of the same variational inequality \eqref{eq:VIofVE}). Therefore, we obtain the following result:

\begin{proposition}\label{prop:GNE_opt}
The set of Variational Equilibria (such that $\zeta_{nm}=\zeta_{mn}$ for all $n \in \mathcal{N}$ and all $m\neq n \in \Omega_n$) coincides with the set of social welfare optima.
\end{proposition}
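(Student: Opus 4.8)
The plan is to show that the two sets are characterized by one and the same first-order system, and then to invoke convexity to conclude that this system exactly describes both of them. First I would record that the centralized problem of Section~\ref{sec:centralized_md} is a concave maximization over the polyhedron defined by \eqref{eq:CM1}--\eqref{eq:CM5}; since all the constraints are affine, a constraint qualification holds automatically, so the KKT system \eqref{eq:KKT1}--\eqref{eq:KKT3} together with the complementarity conditions \eqref{eq:cc1}--\eqref{eq:cc6} is both necessary and sufficient for a point to be a social welfare optimum. On the other side, by the definition of a Variational Equilibrium and the remark already made in the text, a feasible point is a VE if and only if it solves $K\!K\!T_n$ for every $n$ and, in addition, the coupling multipliers satisfy $\zeta_{nm}=\zeta_{mn}$ for all $n$ and all $m\in\Omega_n$, $m\neq n$.

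Next I would compare the two systems term by term. Because $\Pi_n$ depends only on $(D_n,G_n,\bm{q}_n)$, one has $\nabla_{(D_n,G_n,\bm{q}_n)}SW=\nabla\Pi_n$, so the stationarity equations of the centralized problem are literally the union over $n\in\mathcal{N}$ of the stationarity equations of the $N$ player problems \eqref{problemUsern}. Likewise, the individual constraints \eqref{cd:capaD}--\eqref{cd:capacity}, \eqref{cd:game_D} and their associated complementarity conditions coincide with \eqref{eq:CM1}--\eqref{eq:CM3}, \eqref{eq:CM5}. The only point requiring care is the reciprocity constraint: in the centralized problem \eqref{eq:CM4} is written once per unordered pair (index $m>n$) with a single multiplier $\zeta_{nm}$, whereas in the game it appears in both the problem of $n$ and the problem of $m$, a priori with distinct multipliers $\zeta_{nm}$ and $\zeta_{mn}$. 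Imposing $\zeta_{nm}=\zeta_{mn}$ identifies these two copies, and the aggregated stationarity/complementarity conditions of the pair $\{n,m\}$ then reduce to exactly the centralized ones — this is precisely why the convention ``$\zeta_{nm}:=\zeta_{mn}$ for $m<n$'' was adopted right after \eqref{eq:KKT3}. Hence the VE system and the centralized KKT system are the very same system of equations and inequalities.

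Finally, chaining the two characterizations: $\hat{\xx}$ is a social welfare optimum, if and only if $\hat{\xx}$ solves the centralized KKT system, if and only if $\hat{\xx}$ solves $K\!K\!T_n$ for each $n$ with $\zeta_{nm}=\zeta_{mn}$, if and only if $\hat{\xx}$ is a Variational Equilibrium; this is the claimed equality of sets. A fully equivalent packaging of the same argument, which sidesteps the multiplier bookkeeping, goes through the variational inequality \eqref{eq:VIofVE}: first-order optimality for the concave program $\max_{\xx\in\mc{R}}SW(\xx)$ reads $\langle\nabla SW(\hat{\xx}),\xx-\hat{\xx}\rangle\leq 0$ for all $\xx\in\mc{R}$, and decomposability of $SW$ (Remark~\ref{rm:gameWithoutGamingObjectives}) turns $\nabla SW(\hat{\xx})$ into $\sum_n\nabla\Pi_n(\hat{\xx}_n)$, which is exactly the VI \eqref{eq:VIofVE} defining a Variational Equilibrium. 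The only (and very mild) obstacle in the whole argument is the careful matching of the reciprocity multipliers $\zeta_{nm}$ and $\zeta_{mn}$ across the two players' problems; once that identification is made, everything else is a direct recognition of identical expressions.
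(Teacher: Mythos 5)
Your proposal is correct and follows essentially the same route as the paper, which justifies Proposition~\ref{prop:GNE_opt} by observing (after Remark~\ref{rm:gameWithoutGamingObjectives}) that the Variational Equilibria and the social welfare maximizers are characterized by the identical KKT system, equivalently by the same variational inequality \eqref{eq:VIofVE}. Your more detailed bookkeeping of the reciprocity multipliers $\zeta_{nm}=\zeta_{mn}$ simply makes explicit what the paper leaves implicit via the convention $\zeta_{nm}:=\zeta_{mn}$ for $m<n$.
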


\medskip

\textbf{Scenario (ii)} corresponds to the case of partial price coordination or a completely missing market for some products. Agents with different willingness to pay for a certain resource face a price gap due to the lack of arbitrage opportunities that prevent price convergence. This imperfect coordination among agents relates to the notion of Generalized Nash Equilibrium (GNE), where nothing prevents the multipliers $\zeta_{nm}$ and $\zeta_{mn}$ to be different.

\begin{remark}
A particular class of GNE is called restricted GNE \cite{fukushima}. It assumes that the dual variables of the shared constraint \eqref{cd:transaction} belongs to a non empty cone of $\mathbb{R}^{N(N-1)}$. 

A particular class of restricted GNE is called normalized equilibrium, introduced by Rosen \cite{rosen1965}. There, the dual variables of the shared constraint \eqref{cd:transaction} are equal up to a constant endogenously given factor
$r_n$ that depends on prosumer $n$, but not on constraints. Mathematically, it means  $r_n \zeta_{nm} = r_m \zeta_{mn}$, for all $n \in \mathcal{N}$ and all $m \in \Omega_n, m \neq n$.
\end{remark}

From $K\!K\!T_n$, we see that, as in the centralized case, $\lambda_n = \zeta_{nm} + c_{nm} + \xi_{nm}$, i.e., the per-unit nodal price at $n$ is the sum of the transaction price, the preference price and the congestion price, all for getting one unit from $m$ to $n$, for each neighbor of $m$. 

Besides,
\begin{equation}\label{eq:zeta_p2p}
\zeta_{nm} = \lambda_n - c_{nm} - \xi_{nm}, \forall m \in \Omega_n, m \neq n\ ,
\end{equation}
which gives the transaction price for agent $n$ or, in other words, her evaluation of the trade $q_{mn}$. 

\bigskip

In order to derive some results on GNE and simplify notations, let us introduce the coefficient $r_n$ as:
\begin{equation}
    \zeta_{0n} r_n = \zeta_{n0}, \quad \forall n \in \mathcal{N} \ .
\end{equation}

\begin{remark}
We interpret this situation as one where there is an imperfect market for determining the bilateral trade prices obtained as dual variables of the shared constraint \eqref{cd:transaction}. Between any couple of prosumer nodes, bilateral trade prices do tend to equalize (i.e., $r_n$ is close to $1$ for any $n \in \Omega_0$ --- meaning that the GNE approaches the Variational Equilibrium), but there remains a gap due to insufficient liquidity or differences in the price bids for the asked quantity \cite{oggioni}. To some extent, $r_n$ can be interpreted as a measure of the efficiency loss introduced by the GNE in comparison with the Variational Equilibrium.  \end{remark}

Using Equation \eqref{eq:zeta_p2p} for the node 0 and an arbitrary node $n\in  \Omega_0$ and for an arbitrary node $n \in \Omega_0$ and the node 0, and summing up both relations, we get:
\begin{align} \label{eq:lambdan_GNE}
&  \lambda_n = r_n \lambda_0+\big( c_{n0}-r_n c_{0n} \big) + \big( \xi_{n0}  - r_n \xi_{0n}\big),  \ \forall n \in \Omega_0.
\end{align}

Similarly to the centralized market design case, since the total sum of the net imports in all nodes should vanish under no RES-based generation waste, i.e., $\sum_{n\in\mathcal{N}}Q_n=0$, we infer the closed form expression of the nodal price at the root node, similar to the centralized case:
\begin{align}
 \lambda_0  \sum_{n \in \mathcal{N}}\left(\frac{1}{2\tilde{a}_n}+\frac{1}{a_n}\right)r_n  =&  \sum_{n \in \mathcal{N}} \Big(  D_n^{\star} -\frac{1}{2\tilde{a}_n}(\overline{\mu}_n-\underline{\mu}_n)+\frac{b_n}{a_n}+\frac{1}{a_n}(\overline{\nu}_n-\underline{\nu}_n) 
-\Delta G_n \Big) \nonumber \\
-& \sum_{n\in\Omega_0}\left(\frac{1}{2\tilde{a}_n}+\frac{1}{a_n}\right) \Big[ (c_{n0}-c_{0n} r_n )+\big(  \xi_{n0}  - r_n \xi_{0n}\big)\Big]. \label{eq:lambda0_GNE}
\end{align}

We introduce $\text{SOL}^{\text{GNEP}}$ as the set of GNE solutions of the peer-to-peer non-cooperative game.  

As opposed to the Variational Equilibrium, GNEs are not unique in general. It is relevant to study how efficient those different outcomes can be in comparison  to the Variational Equilibrium outcome (where the bilateral trades would be settled down by a MO).

\newcommand{\Pgne}{\P^{\GNE}_{\bm{\omega}}}
To that purpose, we apply the parameterized variational inequality approach \cite{nabetani2009,oggioni}, which enables to evaluate the set of GNEs. In our specific case, this leads to  the following  optimization problem $\Pgne$, parameterized by the coefficients $\omega\nm>0$ corresponding to an additional value for user $n$ for its trading constraint with $m$:
\begin{subequations}
\label{eq:pbAuxiliaryGNE}
\begin{align} 
 \Pgne \quad \quad \max_{\bm{D},\bm{G},\bm{q}} & \sum_{n\in\mathcal{N}} \left[ \Pi_n(D_n,G_n,\bm{q}_n) -  \sum_{m\in\Omega_n,m\neq n} \omega\nm q\mn \right], \label{eq:paraVI1} \\ 
s.t. \ \ & \underline{D}_n \leq D_n \leq \overline{D}_n, \forall n \in \mathcal{N},   & (\tcb{\underline{\mu}_n},\tcb{\overline{\mu}_n})  \label{eq:paraVI2} \\
&  \underline{G}_n \leq G_n \leq \overline{G}_n, \forall n \in \mathcal{N},  &(\tcb{\underline{\nu}_n},\tcb{\overline{\nu}_n})  \label{eq:paraVI3} \\
& q_{nm}\leq \kappa_{nm}  & (\tcb{\xi_{nm}}) \label{eq:paraVI4} \\
&  q_{nm}+q_{mn} \leq 0, \forall m \in\Omega_n, m > n, \forall n \in \mathcal{N}.  & (\tcb{\zeta_{nm}}) \label{eq:paraVI5} \\
&  D_n = G_n +\Delta G_n + Q_n, \forall n \in \mathcal{N},  & (\tcb{\lambda_n}) \label{eq:paraVI6} \ .
\end{align}
\end{subequations}
Indeed, from \cite[Cor 3.1]{nabetani2009} and \cite[Thm.~3.3]{nabetani2009}, we have the following results:
\begin{proposition} \label{prop:GNEparam}
\begin{enumerate}[label=(\roman*)]
    \item All GNEs can be found from problem \eqref{eq:pbAuxiliaryGNE}, that is:
    \begin{equation*}
\text{SOL}^{\text{GNEP}} \subset \bigcup_{(\omega\nm) \in \rr^{*N(N-1)}_{+}} \text{SOL}\left(\Pgne\right) \ ;
\end{equation*}
    \item reciprocally, if $(\bm{D},\bm{G},\bm{q}, \bm{\zeta})$ is a solution of $\Pgne$ (where  $\bm{\zeta}$ are multipliers associated to \eqref{eq:paraVI5}), then
    \begin{equation}
(\bm{D},\bm{G},\bm{q}, \bm{\zeta}) \text{ is a GNE} \ \Longleftrightarrow \ \omega\nm (q\nm+q\mn)=0, \  \forall n\neq m \ , 
\end{equation}
and in that case the multipliers associated to \eqref{cd:transaction} in the GNE problem are defined by $\hat{\zeta}\nm= \zeta\nm+ \omega\nm$ .
\end{enumerate}
\end{proposition}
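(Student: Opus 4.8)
The plan to prove \Cref{prop:GNEparam} is to recognize it as the specialization to our game of the parameterized variational-inequality framework of \cite{nabetani2009} --- part (i) is \cite[Cor 3.1]{nabetani2009} and part (ii) is \cite[Thm.~3.3]{nabetani2009} --- and then to give a short direct verification, which is possible here because the objectives $\Pi_n$ are decoupled and all the constraints are affine, so that the variational inequality \eqref{eq:VIofVE} reduces to the concave program $\Pgne$, for which the KKT conditions are both necessary (affine constraint qualification) and sufficient, just as the collection of the $K\!K\!T_n$ characterizes optimality of the player problems \eqref{problemUsern}. First I would write these two KKT systems side by side and observe that they differ \emph{only} through the coupling constraint $q\nm+q\mn\le 0$: in $\Pgne$ it is imposed once per unordered pair with a single multiplier $\zeta\nm$, whereas in the game it sits in both player $n$'s and player $m$'s problem with a priori distinct multipliers $\zeta\nm\neq\zeta\mn$; and in $\Pgne$ the stationarity in $q\mn$ carries an extra term $\omega\nm$, reading $c\nm+\omega\nm+\xi\nm+\zeta\nm-\lambda_n=0$ instead of \eqref{eq:KKT3}, while the stationarities in $D_n,G_n$ and every feasibility or complementarity condition not touching \eqref{cd:transaction} are unaffected by $\bm\omega$.

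For part (i), given a GNE $(\bm D,\bm G,\bm q)$ with coupling multipliers $(\hat\zeta\nm)_{n\neq m}$ and the remaining multipliers $\lambda_n,\xi\nm,\underline\mu_n,\ldots$, I would set $\omega\nm\eqd\hat\zeta\nm$ for every ordered pair, take the coupling multiplier of $\Pgne$ to be $\zeta\nm\eqd 0$, and keep all the other multipliers as in the GNE. Then primal feasibility is the same set; dual feasibility holds ($\zeta\nm=0$, the others inherited); the coupling complementarity $0\le\zeta\nm\perp-(q\nm+q\mn)\ge 0$ holds trivially since $\zeta\nm=0$ and the point is feasible for \eqref{cd:transaction}; the shifted stationarity w.r.t. $q\mn$ becomes $c\nm+\omega\nm+\xi\nm+\zeta\nm-\lambda_n=c\nm+\hat\zeta\nm+\xi\nm-\lambda_n=0$, i.e. the trade condition of $K\!K\!T_n$ for that ordered pair; and the $D_n,G_n$ stationarities and the $\bm\mu,\bm\nu,\bm\xi$ complementarities carry over verbatim. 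Hence the GNE point is a KKT point --- therefore an optimal solution --- of the concave program $\Pgne$, which gives the inclusion. (I would take $\bm\omega\ge\bm 0$; the strict positivity written in the statement is inessential for the inclusion and is in any case met for every pair whose coupling constraint is active at the GNE with a positive multiplier, the only case relevant for recovering binding equilibria.)

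For part (ii), let $(\bm D,\bm G,\bm q,\bm\zeta)$ be a solution of $\Pgne$ with $\bm\zeta$ the multipliers of \eqref{eq:paraVI5}, and set $\hat\zeta\nm\eqd\zeta\nm+\omega\nm$ for every ordered pair (with the convention $\zeta\mn\eqd\zeta\nm$ for the shared constraint). If $\omega\nm(q\nm+q\mn)=0$ for all $n\neq m$, I would check that $(\bm D,\bm G,\bm q)$ together with the multipliers $(\hat\zeta\nm)$ solves $K\!K\!T_n$ for each $n$: the trade stationarity $c\nm+\xi\nm+\hat\zeta\nm-\lambda_n=c\nm+\xi\nm+\zeta\nm+\omega\nm-\lambda_n=0$ is exactly the shifted stationarity of $\Pgne$; the $D_n,G_n$ stationarities, the feasibility constraints and the $\bm\mu,\bm\nu,\bm\xi$ complementarities are inherited; dual feasibility $\hat\zeta\nm=\zeta\nm+\omega\nm\ge 0$ holds; and the coupling complementarity $\hat\zeta\nm(-q\nm-q\mn)=\zeta\nm(-q\nm-q\mn)+\omega\nm(-q\nm-q\mn)=0$ follows from the $\Pgne$ complementarity and the hypothesis. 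Conversely, if the reconstruction $\hat\zeta\nm=\zeta\nm+\omega\nm$ already yields multipliers solving $K\!K\!T_n$ (so that $(\bm D,\bm G,\bm q,\bm\zeta)$ ``is a GNE''), then its coupling complementarity gives $(\zeta\nm+\omega\nm)(-q\nm-q\mn)=0$, and subtracting $\zeta\nm(-q\nm-q\mn)=0$ (which holds by the $\Pgne$ KKT) leaves $\omega\nm(q\nm+q\mn)=0$; this proves the equivalence and, simultaneously, the multiplier formula.

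I expect the only real difficulty to be combinatorial rather than computational: keeping straight the bookkeeping of the bilateral coupling constraint --- one inequality per unordered pair, two multipliers $\hat\zeta\nm,\hat\zeta\mn$ for it in the game, one multiplier $\zeta\nm$ plus the two parameters $\omega\nm,\omega\mn$ in $\Pgne$, with $\zeta\mn=\zeta\nm$ in the latter --- and verifying that perturbing the objective of $\Pgne$ by $-\sum_n\sum_{m\in\Omega_n,m\neq n}\omega\nm q\mn$ shifts precisely the trade-stationarity equations, by precisely $\omega\nm$, and nothing else, so that the game's asymmetric $\hat\zeta\nm,\hat\zeta\mn$ are exactly absorbed by the symmetric $\zeta\nm$ together with the pair $(\omega\nm,\omega\mn)$. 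Once the index conventions of \eqref{cd:transaction}, \eqref{eq:paraVI5} and of the capacity constraints \eqref{cd:capacity} are aligned, the rest is the elementary complementarity algebra above plus the convexity facts already recorded (KKT $\Leftrightarrow$ optimality for $\Pgne$, and $K\!K\!T_n$ $\Leftrightarrow$ optimality for each player's problem).
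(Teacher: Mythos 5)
Your proof is correct and follows essentially the same route as the paper: part (i) is exactly the paper's argument (set $\bm{\omega}:=\hat{\bm{\zeta}}$ and check that the GNE point satisfies the KKT system of $\Pgne$), and part (ii) unpacks into explicit complementarity algebra what the paper obtains by citing \cite[Thm.~3.3]{nabetani2009} directly. The only point worth making explicit is that your converse in (ii) tacitly identifies the GNE multipliers with the reconstructed ones $\hat{\zeta}\nm=\zeta\nm+\omega\nm$, which requires uniqueness of the multipliers; this is guaranteed by the linear independence of the constraints, precisely the hypothesis the paper invokes when applying Nabetani et al.
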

\begin{proof}
For (i), writing the KKT conditions verified by a solution $(\bm{D},\bm{G},\bm{q})$ of the GNE problem \eqref{problemUsern} with Lagrangian multipliers $(\hat{\zeta}\nm)_{n\neq m}$, it is easy to verify that $(\bm{D},\bm{G},\bm{q})$ verifies the KKT conditions of \eqref{eq:pbAuxiliaryGNE} $\P^{\GNE}_{\bm{\hat{\zeta}}}$, where the parameters are taken to $\bm{\omega}:=\bm{\hat{\zeta}}$.

For (ii), we use the fact that problem \eqref{eq:pbAuxiliaryGNE} has linearly independent constraints, and apply \cite[Thm.~3.3]{nabetani2009} directly.
\end{proof}

\subsection{Dealing with Congestion}\label{sec:perf}

Let us first explicit the following fact on congested lines:
\begin{lemma}\label{lemma:congestion_unilateral}
For any couple of nodes $n \in \mc{N}, m \in \Omega_n, m \neq n$, such that $\kappa_{nm}>0$, $\kappa_{mn}>0$, $q_{nm}=\kappa_{nm}$ and $q_{mn}=\kappa_{mn}$ cannot hold simultaneously.
\end{lemma}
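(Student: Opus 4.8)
The plan is to derive a contradiction from the supposition that both trade flows are simultaneously saturated, $q_{nm}=\kappa_{nm}$ and $q_{mn}=\kappa_{mn}$, using only the trading reciprocity constraint \eqref{cd:transaction} (equivalently \eqref{eq:CM4}) together with the sign hypothesis $\kappa_{nm}=\kappa_{mn}>0$. First I would recall that the reciprocity constraint couples the two directed flows by $q_{mn}\leq -q_{nm}$, and, since $\kappa_{nm}=\kappa_{mn}$, symmetrically $q_{nm}\leq -q_{mn}$. Substituting the assumed equalities gives $\kappa_{mn}\leq -\kappa_{nm}$, i.e. $\kappa_{nm}+\kappa_{mn}\leq 0$, which together with $\kappa_{nm}=\kappa_{mn}>0$ forces $2\kappa_{nm}\leq 0$, contradicting $\kappa_{nm}>0$.

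The one subtlety to handle cleanly is that \eqref{cd:transaction} is, as stated in the centralized problem, indexed only for $m>n$ (see the Remark following the centralized program), so strictly speaking only one of the two inequalities $q_{mn}\leq -q_{nm}$ and $q_{nm}\leq -q_{mn}$ appears explicitly in the model; but they are the same inequality once we note the convention is that the constraint between the unordered pair $\{n,m\}$ is written once. Concretely, whichever of $n,m$ is smaller, the single constraint reads $q_{mn}+q_{nm}\leq 0$ (this is exactly the form appearing in \eqref{eq:cc6} and in \eqref{eq:paraVI5}), so I would phrase the argument directly in terms of $q_{nm}+q_{mn}\leq 0$: plugging in $q_{nm}=\kappa_{nm}$ and $q_{mn}=\kappa_{mn}=\kappa_{nm}$ yields $2\kappa_{nm}\leq 0$, contradicting $\kappa_{nm}>0$. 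This way no ordering case distinction is needed at all.

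The proof is therefore essentially immediate and there is no real obstacle; the only thing to be careful about is invoking the correct already-established fact, namely that at any feasible point (hence at any centralized optimum, any GNE, or any variational equilibrium) the reciprocity constraint $q_{nm}+q_{mn}\leq 0$ holds for every pair, so the lemma is a purely feasibility-level statement. I would state it in one or two lines: suppose for contradiction $q_{nm}=\kappa_{nm}$ and $q_{mn}=\kappa_{mn}$; then feasibility of \eqref{cd:transaction} gives $0\geq q_{nm}+q_{mn}=\kappa_{nm}+\kappa_{mn}=2\kappa_{nm}>0$, a contradiction. Hence the two saturations cannot occur together.
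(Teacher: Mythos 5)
Your argument is correct and is exactly the paper's (unwritten) one-line proof: the paper simply says the result is ``direct from the capacity and transaction constraints,'' and your chain $0\geq q_{nm}+q_{mn}=\kappa_{nm}+\kappa_{mn}>0$ is precisely that observation, with the indexing subtlety for \eqref{cd:transaction} handled appropriately. (Minor remark: you do not even need $\kappa_{nm}=\kappa_{mn}$, since $\kappa_{nm}+\kappa_{mn}\leq 0$ already contradicts the hypothesis that both capacities are strictly positive.)
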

The proof is direct from the capacity and transaction constraints. Then, we obtain the following sufficient condition for a line to be saturated:
\begin{proposition}\label{prop:preference_asymmetry}
Suppose $\xi_{n0}=\xi_{0n}=0, \forall n \in \Omega_0$, i.e., there are large line capacities from and to node $0$, $c_{n0} = c_{m0}$, i.e., the nodes have the same preferences for node $0$, and the node $0$ has the same preferences for any node, i.e., $c_{0n}=c_{0m}$. For any couple of nodes $n \in \mc{N}, m \in \Omega_n, m\neq n$, asymmetric preferences (such as $c_{mn}>c_{nm}$ or $c_{mn}<c_{nm}$) imply that the node with the smaller preference for the other saturates the line. 
\end{proposition}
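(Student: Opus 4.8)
The starting point is Proposition~\ref{prop:optimum} and, more specifically, the relation~\eqref{eq:diffc}, which reads $c_{nm}-c_{mn}+\xi_{nm}-\xi_{mn}=\lambda_n-\lambda_m$ for any couple of neighbors. Under the hypotheses, I first pin down the nodal prices: since $\xi_{n0}=\xi_{0n}=0$, $c_{n0}=c_{m0}$ and $c_{0n}=c_{0m}$ for all $n$, formula~\eqref{eq:lambdan} (resp.\ its peer-to-peer analogue~\eqref{eq:lambdan_GNE} with $r_n=1$, since Scenario (i) is the relevant one here via Proposition~\ref{prop:GNE_opt}) gives $\lambda_n = c_{n0}-c_{0n}+\lambda_0$, a value that is the \emph{same} for every prosumer node $n\in\Omega_0$. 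Call this common value $\bar\lambda$. Then for the couple $n,m$ in question, \eqref{eq:diffc} collapses to $c_{nm}-c_{mn}+\xi_{nm}-\xi_{mn}=0$, i.e. $\xi_{nm}-\xi_{mn}=c_{mn}-c_{nm}$.

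Now suppose, say, $c_{mn}>c_{nm}$ (so that $n$ has the smaller preference for $m$). Then $\xi_{nm}-\xi_{mn}=c_{mn}-c_{nm}>0$, hence $\xi_{nm}>0$. By the complementarity condition~\eqref{eq:cc5} applied at $n$ (i.e. $0\le\xi_{nm}\perp\kappa_{mn}-q_{mn}\ge0$), $\xi_{nm}>0$ forces $q_{mn}=\kappa_{mn}$: the line from $m$ to $n$ is saturated. Since $n$ is the node with the smaller preference for the other, this is exactly the claim. The case $c_{mn}<c_{nm}$ is symmetric: then $\xi_{mn}>0$ and complementarity at $m$ gives $q_{nm}=\kappa_{nm}$, so again the node with the smaller preference ($m$ here) saturates its incoming line. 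Lemma~\ref{lemma:congestion_unilateral} guarantees these two saturations are not required simultaneously, which is consistent with the strict inequality between the two preferences.

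The only subtlety — and the step I would be most careful about — is the reduction ``$\lambda_n$ is the same for all prosumer nodes''. This uses~\eqref{eq:lambdan}, which was derived for $n\in\Omega_0$, i.e. assumes $n$ and $m$ are both neighbors of the root and that the relevant chain of~\eqref{eq:diffc} relations can be telescoped through node $0$ with vanishing congestion and equal preferences toward/from $0$. In the peer-to-peer (GNE) reading one must moreover invoke Scenario (i)/Variational Equilibrium (or note $r_n=r_m=1$ under the stated symmetry) so that~\eqref{eq:lambdan_GNE} reduces correctly; I would state explicitly which equilibrium concept is in force. Once $\lambda_n=\lambda_m$ is in hand, the rest is a one-line sign argument from~\eqref{eq:diffc} plus a complementarity slackness read-off from~\eqref{eq:cc5}, with Lemma~\ref{lemma:congestion_unilateral} ensuring internal consistency.
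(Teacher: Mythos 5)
Your proposal is correct and follows essentially the same route as the paper: the paper sums three instances of \eqref{eq:diffc} over the couples $(n,0)$, $(0,m)$, $(m,n)$ to get $\xi_{nm}-\xi_{mn}=c_{mn}-c_{nm}$ under the stated symmetry, which is algebraically identical to your step of first deducing $\lambda_n=\lambda_m$ from \eqref{eq:lambdan} and then applying \eqref{eq:diffc} once to the pair $(m,n)$. The concluding sign argument via Lemma~\ref{lemma:congestion_unilateral} and the complementarity condition \eqref{eq:cc5} matches the paper's.
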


\begin{proof}
For any $n \in \mc{N}, m \in \Omega_n, m \neq n$, applying Equation~\eqref{eq:diffc} for the three couples of nodes: $(n,0)$, $(0,m)$, $(m,n)$, we obtain:
\begin{align*}
c_{n0}-c_{0n}+\xi_{n0}-\xi_{0n}=&\lambda_n-\lambda_0, \\
c_{0m}-c_{m0}+\xi_{0m}-\xi_{m0}=&\lambda_0-\lambda_m, \\
c_{mn}-c_{nm}+\xi_{mn}-\xi_{nm}=&\lambda_m-\lambda_n.
\end{align*}

Summing up the three equations, we get:
\begin{equation*}
\xi_{nm}-\xi_{mn} = (c_{0m}-c_{0n}) + (c_{n0} - c_{m0}) + (\xi_{n0} - \xi_{0n}) + (\xi_{0m} - \xi_{m0}) + (c_{mn} - c_{nm}). 
\end{equation*}

Under the assumptions of the proposition, the equation can be simplified to give:
\begin{equation*}
    \xi_{nm}-\xi_{mn} = c_{mn}-c_{nm}.
\end{equation*}

Then, two cases arise depending on the order of $(n,m)$ preferences:
\begin{itemize}
    \item[] \textbf{(i)} If $c_{mn}>c_{nm}$ (meaning that $m$ wants(buys) to sell to $n$ more(less) than $n$ wants to sell(buy) to $m$), $\xi_{nm}-\xi_{mn}>0$, which implies from Lemma~\ref{lemma:congestion_unilateral} that $\xi_{nm}>0$. Then, for the complementarity constraint \eqref{eq:cc5} to hold we need to have $q_{nm}=\kappa_{nm}$, i.e., $m$ saturates the line from $m$ to $n$;
    \item[] \textbf{(ii)} If $c_{nm}>c_{mn}$ (meaning that $n$ wants(buys) to sell to $m$ more(less) than $m$ wants to sell(buy) to $n$), $\xi_{nm}-\xi_{mn}<0$, which implies from Lemma~\ref{lemma:congestion_unilateral} that $\xi_{mn}>0$. Then, for the complementarity constraint \eqref{eq:cc5} to hold we need to have $q_{mn}=\kappa_{mn}$, i.e., $n$ saturates the line from $n$ to $m$.
\end{itemize}
\end{proof}

The following proposition gives a sufficient condition for the distribution grid lines become congested along a cycle, analog to \Cref{prop:fullCapaTrades}. The proof is similar and is omitted.
\begin{proposition} \label{prop:fullCapaTradesGame}
Suppose that there is a sequence of distinct indices $(n_i)_{1\leq i\leq k}$ such that $ \tC_{n_i,n_{i+1}}-\tC_{n_i,n_{i-1}} < 0 $ for all $i= 1, \ldots, k$, where $n_{k+1}:=n_1$.
Then, at an equilibrium, there is a trade opposed to the cycle made at full capacity i.e. there exists $i\in \{1,\dots,k\}$ such that $q_{n_{i+1},n_i} = \kappa_{n_{i+1},n_i} $.
\end{proposition}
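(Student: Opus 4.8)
The plan is to mimic the argument of \Cref{prop:fullCapaTrades}, but using the stationarity conditions of the peer-to-peer game rather than a direct social-welfare perturbation, since at a GNE each player optimizes only her own utility and there is no single objective to increase. First I would write out, for each node $n_i$ in the cycle and each of its two cycle-neighbors, the stationarity condition \eqref{eq:KKT3} coming from $K\!K\!T_{n_i}$, namely $c_{n_i,n_{i+1}}+\xi_{n_i,n_{i+1}}+\zeta_{n_i,n_{i+1}}-\lambda_{n_i}=0$ and $c_{n_i,n_{i-1}}+\xi_{n_i,n_{i-1}}+\zeta_{n_i,n_{i-1}}-\lambda_{n_i}=0$. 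Subtracting these gives, for each $i$,
\begin{equation*}
(c_{n_i,n_{i+1}}-c_{n_i,n_{i-1}}) + (\xi_{n_i,n_{i+1}}-\xi_{n_i,n_{i-1}}) + (\zeta_{n_i,n_{i+1}}-\zeta_{n_i,n_{i-1}}) = 0 .
\end{equation*}
At a GNE there is no symmetry constraint on the $\zeta$'s across players, so I cannot telescope the $\zeta$ terms directly around the cycle the way one does with $\lambda$ in \eqref{eq:diffc}; this is where the hypothesis $\tC_{n_i,n_{i+1}}-\tC_{n_i,n_{i-1}}<0$ must be combined with the reciprocity/complementarity structure instead.

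The cleaner route, which the paper already hints at by saying ``the proof is similar and is omitted,'' is the contradiction/perturbation argument carried out player-by-player. Suppose for contradiction that at the equilibrium $\bm{x}=(D_n,G_n,\bm q_n)_n$ we have $q_{n_{i+1},n_i}<\kappa_{n_{i+1},n_i}$ strictly for every $i\in\{1,\dots,k\}$, so there is a common $\epsilon>0$ with $q_{n_{i+1},n_i}\le \kappa_{n_{i+1},n_i}-\epsilon$ for all $i$. Define the modified trades $\tilde q_{n_{i+1},n_i}:=q_{n_{i+1},n_i}+\epsilon$ and $\tilde q_{n_i,n_{i+1}}:=q_{n_i,n_{i+1}}-\epsilon$ around the cycle, all other coordinates unchanged. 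As in \Cref{prop:fullCapaTrades}, one checks that every individual constraint \eqref{cd:capaD}--\eqref{cd:game_D} of each player $n_i$ is still satisfied: the demand, flexibility and balance constraints are untouched because the net import $Q_{n_i}=\sum_m q_{m,n_i}$ is unchanged (it gains $+\epsilon$ from the edge to $n_{i-1}$ and loses $-\epsilon$ on the edge to $n_{i+1}$), the capacity constraint \eqref{cd:capacity} for the increased trades holds by the strict-inequality assumption, the capacity constraint for the decreased trades holds trivially, and the reciprocity constraints \eqref{cd:transaction} are preserved since $\tilde q_{n_i,n_{i+1}}$ and $\tilde q_{n_{i+1},n_i}$ move by $-\epsilon$ and $+\epsilon$ so their sum is unchanged.

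Now the key point: for the equilibrium player $n_i$, her utility $\Pi_{n_i}$ is affected only through $\tilde C_{n_i}(\bm q_{n_i})$, and the change in her trading cost is $c_{n_i,n_{i-1}}(q_{n_{i-1},n_i}-\tilde q_{n_{i-1},n_i}) + c_{n_i,n_{i+1}}(q_{n_{i+1},n_i}-\tilde q_{n_{i+1},n_i})$. With the orientation above, $\tilde q_{n_{i-1},n_i}=q_{n_{i-1},n_i}-\epsilon$ (edge from $n_{i-1}$ to $n_i$ carries the ``decrease'' for player $n_i$ since $n_i=(n_{i-1})_{+1}$) and $\tilde q_{n_{i+1},n_i}=q_{n_{i+1},n_i}+\epsilon$, so the net change in $\Pi_{n_i}$ has the same sign as $\epsilon\big(c_{n_i,n_{i-1}}-c_{n_i,n_{i+1}}\big) = -\epsilon\,(\tC_{n_i,n_{i+1}}-\tC_{n_i,n_{i-1}})>0$ by hypothesis. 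Hence the modified point is feasible for player $n_i$'s problem \eqref{problemUsern} and strictly improves her objective, contradicting the assumption that $\bm x$ solves $K\!K\!T_{n_i}$ (equivalently, solves \eqref{problemUsern}, by convexity). Therefore the strict inequality must fail for some $i$, i.e. $q_{n_{i+1},n_i}=\kappa_{n_{i+1},n_i}$ for that $i$, which is the claim.

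The main obstacle to watch is bookkeeping of the edge orientations and of which multiplier/cost term belongs to which player: because $\tC$ is antisymmetric and the cost term $c_{nm}q_{mn}$ is borne by $n$, one must be careful that the perturbation simultaneously looks like ``$+\epsilon$ on one incident cycle-edge and $-\epsilon$ on the other'' from the viewpoint of \emph{every} cycle node $n_i$, which is exactly what the cyclic structure guarantees. A secondary subtlety, as in \Cref{prop:fullCapaTrades}, is the possible degenerate case $\kappa_{n_{i+1},n_i}=0$ for some edge; but then $q_{n_{i+1},n_i}\le 0$ and $q_{n_{i+1},n_i}=\kappa_{n_{i+1},n_i}=0$ would force, via reciprocity, that edge to carry no trade, and one should either exclude such edges from ``strict slack'' at the outset or note that the conclusion $q_{n_{i+1},n_i}=\kappa_{n_{i+1},n_i}$ holds vacuously there. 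Everything else is the same feasibility check and sign computation as in the centralized proof, which is why it is legitimate to say the proof is analogous.
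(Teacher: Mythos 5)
Your overall plan (transplanting the perturbation argument of \Cref{prop:fullCapaTrades} to the game) is the natural reading of the paper's ``the proof is similar'' remark, and your bookkeeping of the joint perturbation's feasibility is careful, but the proof as written has two genuine gaps.

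First, the sign computation does not match the hypothesis. Player $n_i$'s utility changes only through $\tilde{C}_{n_i}(\bm{q}_{n_i})=\sum_{m} c_{n_i,m}\,q_{m,n_i}$, and under your perturbation ($q_{n_{i-1},n_i}$ down by $\epsilon$, $q_{n_{i+1},n_i}$ up by $\epsilon$) the change in $\Pi_{n_i}$ equals $\epsilon\,(c_{n_i,n_{i-1}}-c_{n_i,n_{i+1}})$. You assert this equals $-\epsilon\,(\tilde{C}_{n_i,n_{i+1}}-\tilde{C}_{n_i,n_{i-1}})$, but since $\tilde{C}_{nm}=c_{nm}-c_{mn}$ the two expressions differ by $\epsilon\,(c_{n_{i+1},n_i}-c_{n_{i-1},n_i})$, a term involving the \emph{other} players' preferences that the hypothesis does not control. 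So the stated assumption $\tilde{C}_{n_i,n_{i+1}}-\tilde{C}_{n_i,n_{i-1}}<0$ does not give $\Delta\Pi_{n_i}>0$; what your argument actually needs is $c_{n_i,n_{i+1}}<c_{n_i,n_{i-1}}$. (In the centralized proof this discrepancy is invisible because the cross terms telescope when summed over the cycle; player by player they do not.)

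Second, and more fundamentally, the contradiction you invoke is not a contradiction with the equilibrium property. Your perturbation moves the variables of \emph{every} cycle player simultaneously, whereas a GNE only excludes profitable \emph{unilateral} deviations evaluated with the other players' strategies frozen at their equilibrium values. For player $n_i$ alone, increasing $q_{n_{i+1},n_i}$ by $\epsilon$ must respect the reciprocity constraint $q_{n_{i+1},n_i}+\epsilon\le -q_{n_i,n_{i+1}}$ with the \emph{equilibrium} value of $q_{n_i,n_{i+1}}$ (a decision of player $n_{i+1}$), not with the perturbed value $q_{n_i,n_{i+1}}-\epsilon$ that your feasibility check uses. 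Whenever reciprocity is tight on that edge --- exactly the situation one expects when the trade is actually executed --- player $n_i$'s piece of the deviation is infeasible in her own problem \eqref{problemUsern}, and no contradiction with $K\!K\!T_{n_i}$ arises. This distinction is precisely why GNEs can be strictly worse than the social optimum (cf.\ the three-node example): joint improving directions can coexist with equilibrium. To close the argument you would need to show that for at least one $i$ it is the capacity constraint, not the reciprocity constraint, that blocks the improvement --- for instance by combining the stationarity relations \eqref{eq:KKT3} of each $K\!K\!T_{n_i}$ with the complementarity conditions on $\xi_{n_i,n_{i+1}}$ and $\zeta_{n_i,n_{i+1}}$ --- and that step is missing.
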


\begin{remark}
Classically, the Price of Anarchy (PoA) is introduced as a performance measure to assess the performance of the peer-to-peer market design by comparison to the centralized market design. The PoA is defined as the ratio of the social welfare evaluated in the social welfare optimum to the social welfare evaluated in the worst GNE in the set $\textrm{SOL}^{\textrm{GNEP}}$. Formally, it is defined as follows:
\begin{equation}\label{eq:PoA}
PoA := \frac{\max_{\mathbf{D},\mathbf{G},\mathbf{q}} SW(\mathbf{D},\mathbf{G},\mathbf{q})}{\min_{\mathbf{D},\mathbf{G},\mathbf{q} \in \text{SOL}^{\text{GNEP}}} SW(\mathbf{D},\mathbf{G},\mathbf{q})}.
\end{equation}
From Proposition~\ref{prop:GNE_opt}, in a Variational Equilibrium, $PoA=1$, because a Variational Equilibrium coincides with the optimum of the centralized social welfare optimization problem. However, the GNE set might contain equilibria that do not coincide with the (social welfare) optimum solution of the centralized optimization problem.
\end{remark}

\section{Test Cases}\label{sec:test_cases}

\subsection{A Three Nodes Network with Arbitrage Opportunity}\label{ex:ex1}
\label{subsec:3nodeEx}
In this section, we first present a toy model with only three nodes indexed by $\{0,1,2\}$, as  illustrated in Figure~\ref{fig:scheme_3Nodes}. The root node $0$ has only conventional generation ($\Delta G_0=0$) with cost $(a_0,b_0)=(4,30)$ and $(\uG,\oG)=(0,10)$. Nodes $1$ and $2$ are prosumers with RES-based generators ($\oG_n=\uG_n=0$ and $\Delta G_n>0$ for $n\in \{1,2\}$). Each node is a consumer (with $(\ul{D},\ol{D})=(0,10)$) and  generator (RES or conventional), therefore producing energy that can be consumed locally to meet demand $D_n$ and exported to the other nodes to meet the unsatisfied  demand.

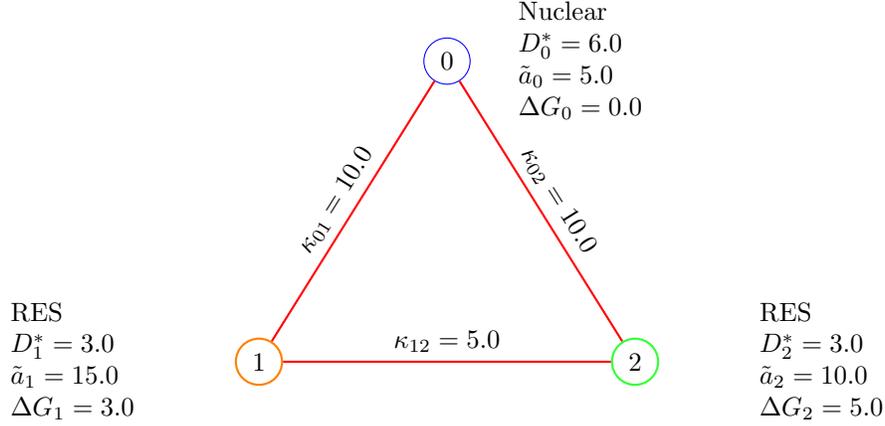
\begin{figure}[!ht]
\begin{center}
\begin{tikzpicture}
\node [draw,circle,blue,fill=white,text=black] (n0) at (0,4) {0};
\node [draw,thick,circle, orange,text=black] (n1) at (-2.5,0) {1};
\node [draw,thick,circle, green!80,text=black] (n2) at (2.5,0) {2};
\node [ right= 0.3cm of n0] (n0p) {\begin{tabular}{l} 
Nuclear\\ 
 $D^*_0= 6.0$ \\ 
$ \tilde{a}_0= 5.0$ \\ 
$\Delta G_0= 0.0 $ \end{tabular} };
\node [ left= of n1] (n1p) {\begin{tabular}{l} 
RES\\ 
 $D^*_1= 3.0$ \\ 
$ \tilde{a}_1= 15.0$ \\ 
$\Delta G_1= 3.0 $ \end{tabular} };
\node [right= of n2] (n2p) {\begin{tabular}{l} 
RES\\ 
 $D^*_2= 3.0$ \\ 
$ \tilde{a}_2= 10.0$ \\ 
$\Delta G_2= 5.0 $ \end{tabular} };
\draw[ thick, red,text=black,sloped] (n0) -- node [above] {$\kappa_{01}= 10.0$} (n1)  (n0) -- node [above] {$\kappa_{02}= 10.0$} (n2)  (n1) -- node [above] {$\kappa_{12}= 5.0$} (n2)  ;

\end{tikzpicture}

\caption{\small{Three node network example.}}
\label{fig:scheme_3Nodes}
\end{center}
\end{figure}

 Regarding the preferences $(c\nm)\nm$, nodes $1$ and $2$ both prefer to buy local and to RES-based generators. 
 Node $0$ is assumed to be indifferent between buying energy from node $1$ or node $2$. 
Capacities are also defined larger from the source node $0$ ($\kappa_{0n}=10$) than between the prosumers nodes ($\kappa_{nm}=5$). 

\begin{table}[!ht]
\begin{center}
\begin{tabular}{|c|c c c|}\hline
$c\nm$  & $0$ & $1$ & $2$ \\\hline
0 & -- & 1.0 & 1.0\\
1& 3.0 & -- & 1.0\\
2 & 2.0 & 1.0 & -- \\ \hline 
\end{tabular} 
\quad
\begin{tabular}{|c|c c c|}\hline
$c\nm-c\mn $ & $0$ & $1$ & $2$ \\\hline
0 & -- & -2.0 & -1.0\\
1 & 2.0 & -- & 0.0\\
2 & 1.0 & 0.0 & --\\\hline
\end{tabular}

\caption{{\small{Price differentiation parameters and matrix of differences.}}}
\label{tab:parameters}
\end{center}
\end{table}

\newcommand{\hh}{\hspace{-2pt}} 
\tikzstyle{line} = [draw,thick=2, color=green!50, -latex',text=black,sloped ]

\tikzstyle{linec} = [draw,thick=2, color=red!70, -latex',text=black,sloped ]

\begin{figure}[!htbp]
\vspace{-0.5cm}
\begin{center}
\subfloat[Centralized solution ($SW=378.3$)]{
\begin{tikzpicture}[scale=1.0, >=latex]
\node [draw,circle,blue,fill=white,text=black] (n0) at (0,4) {0};

\node [draw,thick,circle, orange,text=black] (n1) at (-2.5,0) {1};

\node [draw,thick,circle, green!80,text=black] (n2) at (2.5,0) {2};
 \node [ right= 0.3cm of n0] (n0p)  {\begin{tabular}{l} 
 $\lambda_0 = 9.34 $  \\
     $ D_0=5.07$ \\
  $ G_0=2.17$ \\
$ Q_0=2.9  $
\end{tabular} };
\node [below= (-0.cm) of n1] (n1p)  {\begin{tabular}{l} 
 $\lambda_1 = 11.34 $  \\
     $ D_1=2.62$ \\
  $ Q_1=-0.38  $
\end{tabular} };
\node [below = (-0.cm) of n2] (n2p)  {\begin{tabular}{l} 
 $\lambda_2 = 10.34 $  \\
     $ D_2=2.48$ \\
  $ Q_2=-2.52  $
\end{tabular} };
\path[line,line width=2.48pt](n0) -- node [] {$q_{02}=2.48$} node [above] { $\zeta_{02}\hh=\hh8.34 $}  node [below,blue ] {$\xi_{20} = 0.0 $} (n2) ;
\path[line,line width=5.38pt](n1) -- node [] {$q_{10}=5.38$} node [above] { $\zeta_{10}\hh=\hh8.34 $}  node [below,blue ] {$\xi_{01} = 0.0 $} (n0) ;
\path[linec,line width=5.0pt](n2) -- node [] {$q_{21}=5.0$} node [above] { $\zeta_{21}\hh=\hh9.34 $}  node [below,blue ] {$\xi_{12} = 1.0 $} (n1) ;
\end{tikzpicture}
}
\subfloat[One GNE ($SW=255.5$)]{ \label{fig:3NodesSolGNE}
\begin{tikzpicture}[scale=1.0, >=latex]
\node [draw,circle,blue,fill=white,text=black] (n0) at (0,4) {0};

\node [draw,thick,circle, orange,text=black] (n1) at (-2.5,0) {1};

\node [draw,thick,circle, green!80,text=black] (n2) at (2.5,0) {2};

 \node [ right= 0.5cm of n0] (n0p)  {\begin{tabular}{l} 
 $\lambda_0 = 1.0 $  \\
     $ D_0=5.9$ \\
  $ G_0=0$ \\
$ Q_0=5.9  $
\end{tabular} };
\node [below= (-0.cm) of n1] (n1p)  {\begin{tabular}{l} 
 $\lambda_1 = 90.0 $  \\
     $ D_1=0.0$ \\
  $ Q_1=-3  $
\end{tabular} };
\node [below = (-0.cm) of n2] (n2p)  {\begin{tabular}{l} 
 $\lambda_2 = 18.0 $  \\
     $ D_2=2.1$ \\
  $ Q_2=-2.9  $
\end{tabular} };
\path[line,line width=2pt](n0) -- node [] {$q_{01}=2$} node [above] { $\zeta_{01}\hh=\hh0,\zeta_{10}\hh=\hh87 $}  node [below,blue ] {$\xi_{10} = 0.0 $} (n1) ;
\path[linec,line width=5.0pt](n1) -- node [] {$q_{12}=5.0$} node [above] { $\zeta_{12}\hh=\hh89,\zeta_{21}\hh=\hh17 $}  node [below,blue ] {$\xi_{21} = 0.0 $} (n2) ;
\path[line,line width=7.9pt](n2) -- node [] {$q_{20}=7.9$} node [above] { $\zeta_{20}\hh=\hh16,\zeta_{02}\hh=\hh0 $}  node [below,blue ] {$\xi_{02} = 0.0 $} (n0) ;

\end{tikzpicture}
}
\caption{\small{Comparison of the optimal centralized solution (a) and a GNE solution with low social welfare (b).}}
\label{fig:3NodesSolCentral}
\end{center}
\end{figure}
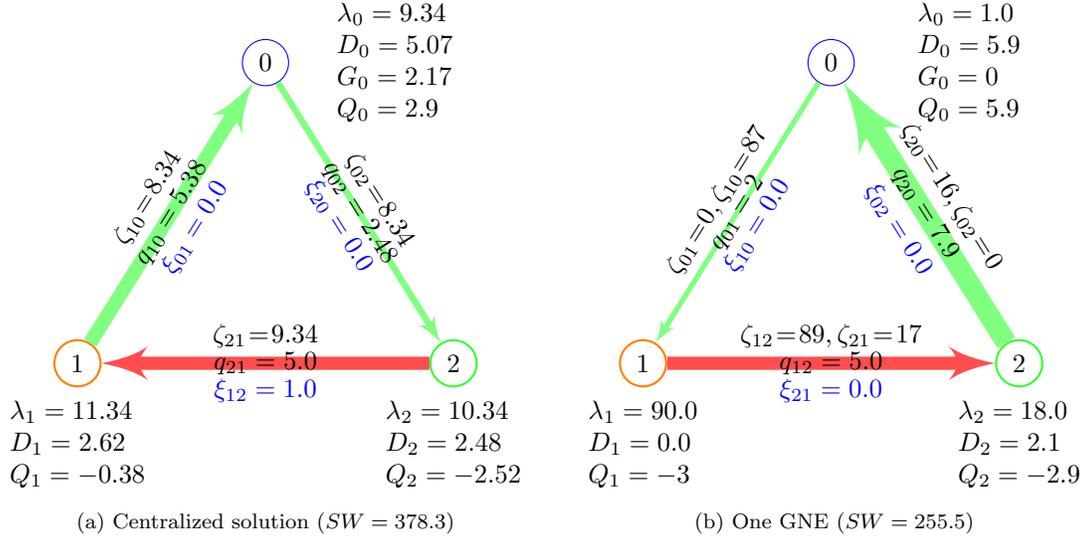

In \Cref{fig:3NodesSolCentral} (a), we illustrate the optimal solution of the centralized market design problem in which the global MO maximizes the social welfare under operational and power-flow constraints \eqref{eq:CM1}-\eqref{eq:CM5}.

We remark on this figure that the trade from node 1 to node 2 is at full capacity, which is explained by  \Cref{prop:fullCapaTrades}.
Indeed,  we see from \Cref{tab:parameters} that there is a ``cycle'' in preferences  $\tC_{01}+\tC_{12}+\tC_{20}=-1$  which explains why we obtain $q_{10}=\kappa_{10}$ and $q_{21}=\kappa_{21}$ in the centralized solution (\Cref{fig:3NodesSolCentral}).

On the contrary, we remark that, in the GNE solution depicted in \Cref{fig:3NodesSolGNE}, the same edge is congested in the reverse way: \Cref{prop:fullCapaTrades} only applies in the case of a centralized solution.

In the example above, the  cycle comes from the fact that it is easier for node 2 to buy from 0 than node 1 to buy from 0: thus, the social welfare can be increased if 1 buys from 2 who buys from 0. Changing the parameters to $c_{20}=c_{30}=3$ removes the cycle in the optimal solution of $(q\nm)\nm$.

\begin{figure}
    \centering
    \includegraphics[width=0.7\textwidth]{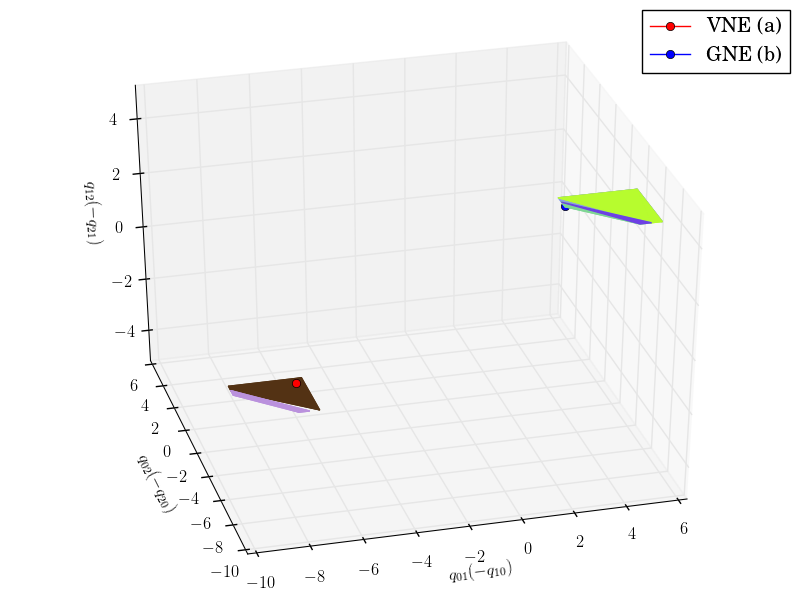}
    \caption{All existing GNEs in $q$-space. \textit{The set of GNEs is given as two connected components, corresponding to the edge (1,2) saturated in one way and the other.} }
    \label{fig:allGNEs3nodes}
\end{figure}
\bigskip

In \Cref{fig:allGNEs3nodes}, we show the different GNEs existing for this reduced problem in the three-dimensional space of transactions. As one can see on this figure, an interesting property is that, for any GNE, the edge from node 1 to node 2 is saturated in one way or the other. 

Evaluating the GNE with the lowest social welfare is difficult because this task does not correspond to a convex problem (in particular, the SW is a concave function). However, the GNE depicted in \Cref{fig:3NodesSolGNE} is the worst GNE that we found with the sampling method given by \Cref{prop:GNEparam}, using a sampling $(\omega\nm)_{n>m}  \in \{ 0,\dots, 100 \}^3$.
Therefore, we can have the following bound on the PoA:
\begin{equation}
    \text{PoA} =  \frac{\max_{\mathbf{D},\mathbf{G},\mathbf{q}} SW(\mathbf{D},\mathbf{G},\mathbf{q})}{\min_{\mathbf{D},\mathbf{G},\mathbf{q} \in \text{SOL}^{\text{GNEP}}} SW(\mathbf{D},\mathbf{G},\mathbf{q})} \geq \frac{378.3}{255.5} \simeq 1.48 \ ,
\end{equation}
which means that, in the peer-to-peer market, in the presence of market imperfections, the resulting social welfare can be more than 50$\%$ smaller than the optimal social welfare (or, the VE obtained in the absence of market imperfections).


\subsection{IEEE 14-bus Network}\label{ex:ex2}

In this example, we consider the IEEE 14-bus network system introduced in \cite{sousa}. Each bus of the network corresponds to a prosumer in our model as described on \Cref{fig:IEEE_network}. The buses 3, 4, 5 and 9 to 14 contain only consumers without any production. Nodes 2 and 3 are prosumers node (consumption and RES production) and also contain thermal production plants. The bus 6 is a prosumer with only intermittent solar energy production. Last, the bus 8 contains only production, renewable and thermal.

The bus 1 corresponding to the grid connection is also  able to provide power to the busses linked to it. 

Each pair of busses is able to trade with its neighboring busses, up to the capacity of the edge linking the pair of busses.

For simplicity, we compute the trades and optimal productions and consumptions for a particular unique time period. 
The renewable energy productions $(\Delta G_n)_n$ and the objective consumptions $(D^{\star}_n)_n $ for this time period are provided in \Cref{fig:IEEE_network}. Note that in this particular example, we have the inequality: 
\begin{equation*}
    28.39 = \sum_{n\in\N} \Delta G_n< \sum_{n\in\N} D^{\star}_n = 69.94 \; \textrm{[GWh]}\ ,
\end{equation*}
which explains partly why we do not have any energy waste in the solutions depicted on \Cref{fig:res_trades_IEEE}.

\begin{figure}
\begin{center}
\begin{tikzpicture}[scale=0.5, >=latex,every node/.style={inner sep=0pt,outer sep=0,font=\small}]
\pic[scale=0.5, >=latex,every node/.style={inner sep=0pt,outer sep=0,font=\small}]{subnodesIEEE};

\tikzstyle{line1v2} = [draw,line width=1pt, color=black,text=black,sloped ]
\tikzstyle{line1v5} = [draw,line width=1pt, color=black,text=black,sloped ]
\tikzstyle{line1v6} = [draw,line width=1pt, color=black,text=black,sloped ]
\tikzstyle{line1v7} = [draw,line width=1pt, color=black,text=black,sloped ]
\tikzstyle{line1v12} = [draw,line width=1pt, color=black,text=black,sloped ]
\tikzstyle{line2v3} = [draw,line width=1pt, color=black,text=black,sloped ]
\tikzstyle{line2v4} = [draw,line width=1pt, color=black,text=black,sloped ]
\tikzstyle{line2v5} = [draw,line width=1pt, color=black,text=black,sloped ]
\tikzstyle{line3v4} = [draw,line width=1pt, color=black,text=black,sloped ]
\tikzstyle{line4v5} = [draw,line width=1pt, color=black,text=black,sloped ]
\tikzstyle{line4v7} = [draw,line width=1pt, color=black,text=black,sloped ]
\tikzstyle{line4v9} = [draw,line width=1pt, color=black,text=black,sloped ]
\tikzstyle{line5v6} = [draw,line width=1pt, color=black,text=black,sloped ]
\tikzstyle{line6v11} = [draw,line width=1pt, color=black,text=black,sloped ]
\tikzstyle{line6v12} = [draw,line width=1pt, color=black,text=black,sloped ]
\tikzstyle{line6v13} = [draw,line width=1pt, color=black,text=black,sloped ]
\tikzstyle{line7v8} = [draw,line width=1pt, color=black,text=black,sloped ]
\tikzstyle{line7v9} = [draw,line width=1pt, color=black,text=black,sloped ]
\tikzstyle{line9v10} = [draw,line width=1pt, color=black,text=black,sloped ]
\tikzstyle{line9v14} = [draw,line width=1pt, color=black,text=black,sloped ]
\tikzstyle{line10v11} = [draw,line width=1pt, color=black,text=black,sloped ]
\tikzstyle{line12v13} = [draw,line width=1pt, color=black,text=black,sloped ]
\tikzstyle{line13v14} = [draw,line width=1pt, color=black,text=black,sloped ]

\node [below=10pt of anchor4v2] (anchor4v2part) {};
\renewcommand{\=}{\hspace{-2pt} = \hspace{-2pt}}

\draw [line1v2] (anchor1v2) -- node [below,yshift=-.1cm] {\tiny $1.0 \quad 1.5$}  (anchor2v1);
\draw [line1v5] (anchor5v1) --++ (0,-30pt) --++ (-30pt,0) -- node [below,yshift=-.1cm] {\tiny $1.0 \quad 1.58$} (anchor1v5) ;
\draw [line1v6] (anchor1v5) -- node [below,yshift=-.1cm] {\tiny $1.0 \quad 1.69$} ++ (6,.8) --++ (0,1.3) ;
\draw [line1v7] (anchor1v7) --  ++ (50pt,-38pt) node [below,yshift=-.1cm,xshift= .5cm] {\tiny $1.0 \quad 1.49$} -| (anchor7v1) ;
\draw [line1v12] (anchor1) -- node [below,yshift=-.1cm] {\tiny $1.0 \quad 1.4$} (anchor12);
\draw [line2v3] (anchor2v3) --++ (0,20pt)-- node [below,yshift=-.1cm] {\tiny $.76 \quad .52$}   ++(3.2,0) --++ (0,-20pt) ;
\draw [line2v4] (anchor2v4) --++ (0,20pt) -- node [below,yshift=-.1cm] {\tiny $.62 \quad .7$}  (anchor4v2part) -- (anchor4v2) ;
\draw [line2v5] (anchor2v6) -- node [below,yshift=-.1cm] {\tiny $.0 \quad .99$}   ++ (0,3.7);
\draw [line3v4] (anchor3v4) -- node [below,xshift=-0.2cm,yshift=-.1cm] {\tiny $.14 \quad .08$}   ++(0,3.7);
\draw [line4v5] (anchor4v5) --++ (0,-20pt) -- node [below,yshift=-.1cm] {\tiny $.78 \quad .02$}  ++(-4,0) --++ (0,20pt);
\draw [line4v7] (anchor4v7) --  node [below,yshift=-.1cm] {\tiny $.72 \quad .33$} ++ (0,3.3) ;
\draw [line4v9] (anchor4v5) -- node [above,yshift=0.1 cm,xshift=0.2cm,] {\tiny $.42 \quad \quad \quad .79$} ++ (0,5.5) ;
\draw [line5v6] (anchor5v2) -- node [below,yshift=-.1cm,xshift=-0.3cm,] {\tiny $.19 \quad \quad \quad .85$}  ++(0,5);
\draw [line6v11] (anchor6v11) -- node [above,yshift=.1cm] {\tiny $.92 \quad .25$} ++ (0,2.5 ) ;
\draw [line6v12] (anchor6v1) -- node [below,yshift=-.1cm] {\tiny $.47 \quad .03$}  (anchor12);
\draw [line6v13] (anchor6v13) -- node [above,yshift=.1cm] {\tiny $.8 \quad \quad \quad .66$} ++ (0,3.9) ;
\draw [line7v8] (anchor7v8) --++(0,35pt) -- node [below,yshift=-.1cm] {\tiny $.78 \quad .13$}   ++(1.4,0);
\draw [line7v9] (anchor7v1) -- node [below,yshift=-.1cm] {\tiny $.24 \quad .38$} ++ (0,2.2) ;
\draw [line9v10] (anchor9v4) -- node [above,yshift=.1cm] {\tiny $.48 \quad .97$} ++ (0,2.1) ;
\draw [line9v14] (anchor9v14) -- node [below,xshift=0.2cm,yshift=-.1cm] {\tiny $.91 \quad .02$} ++ (0,3.5) ;
\draw [line10v11] (anchor10v11) --++(0,20pt) -- node [below,yshift=-.1cm] {\tiny $.46 \quad .32$}  ++(-3.5,0)  --++(0,-20pt) ;
\draw [line12v13] (anchor12v13) -- node [above,yshift=.1cm] {\tiny $.35 \quad .76$}  (anchor13v12) ;
\draw [line13v14] (anchor13v14) --++(0,20pt) -- node [above,yshift=.1cm] {\tiny $.51 \quad .52$}   ++ (3.8,0) --++(0,-20pt);

\def\nd{0.3}
\def \idn{0.15} 
\node [above=0.5cm of G0] (tg1) {\tiny Grid connection}; 
\node [ below=0.8cm of g2,rotate=90] (tg4) {\tiny \begin{tabular}{c}
     wind, solar \\
     $\Delta G\=0.40$
\end{tabular}  }; 
\node [ below=0.5cm of g4,rotate=90] (tg4) {\tiny gas}; 
\node [ below=0.8cm of g6,rotate=90] (tg4) {\tiny \begin{tabular}{c}
     wind \\
     $\Delta G\=4.99$
\end{tabular}  }; 
\node [ below=0.5cm of g7,rotate=90] (tg7) {\tiny coal}; 
\node [ right=0.2cm of g10] (tg10) {\tiny \begin{tabular}{c}
     wind \\
     $\Delta G\=7.5$
\end{tabular}  }; 
\node [ right=0.5cm of g11] (tg11) {\tiny gas}; 
\node [ below=0.5cm of g15,rotate=90] (tg15) {\tiny \begin{tabular}{c}
      solar \\
     $\Delta G\=15.51$
\end{tabular}  }; 

\node [ below=0.8cm of c2,rotate=90] (tc2) {\tiny $D^*_{2} \=6.57$} ;
\node [ below=0.8cm of c3,rotate=90] (tc3) {\tiny $D^*_{3} \=12.55$} ;
\node [right= 0.2cm of c4] (tc4) {\tiny $D^*_{4} \=8.75$} ;
\node [left= 0.2cm of c5] (tc5) {\tiny $D^*_{5} \=6.37$} ;
\node [ below=0.55cm of c6,xshift=-0.1cm,rotate=90] (tc6) {\tiny $D^*_{6} \=4.33$} ;
\node [right= 0.4cm of c9] (tc9) {\tiny $D^*_{9} \=9.42$} ;
\node [ below=0.8cm of c10,rotate=90] (tc10) {\tiny $D^*_{10} \=3.27$} ;
\node [below right= 0.1cm of c11] (tc11) {\tiny $D^*_{11} \=4.51$} ;
\node [above= 0.8cm of c12,rotate=90] (tc12) {\tiny $D^*_{12} \=3.26$} ;
\node [above= 0.8cm of c13,rotate=90] (tc13) {\tiny $D^*_{13} \=5.63$} ;
\node [above= 0.8cm of c14,rotate=90] (tc14) {\tiny $D^*_{14} \=5.28$} ;


\node [below= 5cm of G0, xshift=-3cm ] (ln) {$n$} ;
\node [right= 2cm of ln] (lm) {$m$} ;
\draw (ln) -- node [above, yshift=.1cm] {\tiny $c_{nm} \quad c_{mn}$} (lm) ;

\end{tikzpicture}

\caption{\small{IEEE 14-bus network system }}
\label{fig:IEEE_network}
\end{center}
\end{figure}
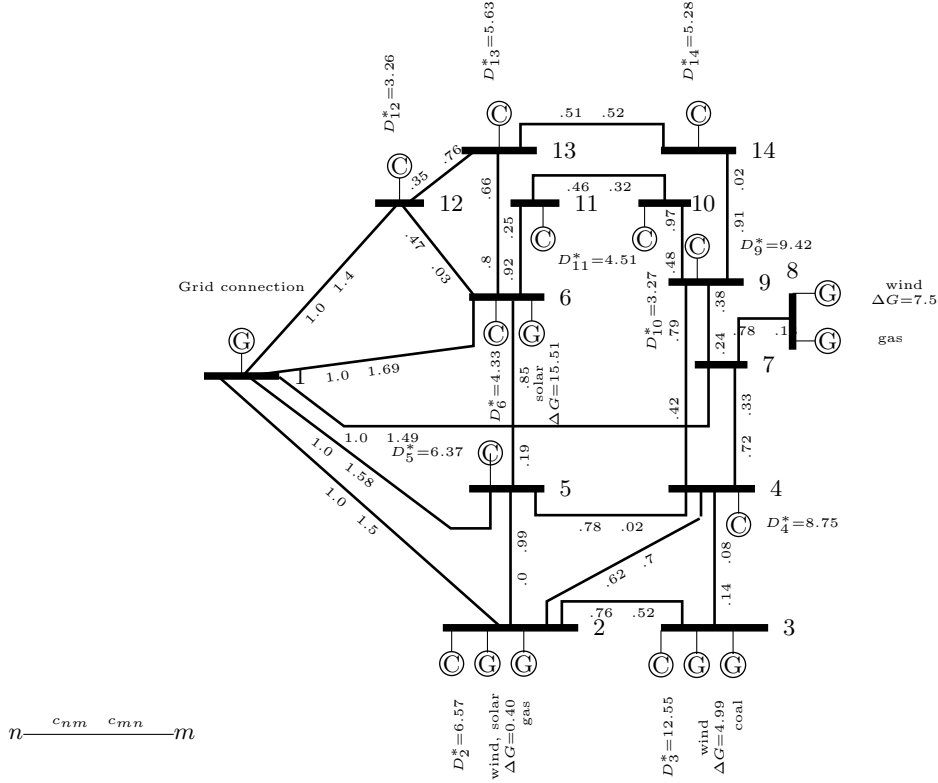


For the trade differentiation prices $(c_{nm})_{n,m}$, we consider four cases:
\begin{enumerate}[label=(\alph*)]
    \item uniform prices: $c_{nm}=1$ for each $n$ and $m$, so that we ensure that there does not exist any cycle in the matrix of price differences as described in \Cref{prop:fullCapaTrades};
    \item heterogeneous prices: for $n\neq 1$ and $m \neq 1$, $c_{nm}$ is chosen uniformly in $[0,1]$. We assume that agents have a preference for local trades so the price with the grid connection bus $c_{n1}$ is larger and chosen uniformly in $[1,2]$. The grid connection bus has no preferences so that $c_{1n}=1$ for each $n$ neighboring bus 1. 
    \item symmetric prices: $(c_{nm})_{nm}$ random and symmetric (for $n< m$, $c_{nm}$ is taken as in (b)).
    \item preferences for local trades with uniform prices: $(c_{nm})_{nm}=1$ if $m\neq 1$ and $c_{n1}=3$.
\end{enumerate}
For each of this case, we compute the centralized solution (also corresponding to the VNE). The solutions are illustrated in \Cref{fig:IEEE_network}: directions of trades are represented by arrows, the wideness of each arrow is proportional to the quantity traded. Trades made at full capacity ($q\nm=\kappa\nm$) are represented by red arrows, while the others are represented by green arrows. We observe that cases (c) and (d) give the same trade solutions $(q\nm)\nm $ at VNE as case (a).

\vspace{0cm}
\begin{figure}
\subfloat[with $(c\nm)\nm =(1)\nm$ uniform, $\SW=395.28$ M\$]{
\begin{tikzpicture}[scale=0.45, >=latex,every node/.style={inner sep=0pt,outer sep=0,font=\small}]

\pic[scale=0.45, >=latex,every node/.style={inner sep=0pt,outer sep=0,font=\small}]{subnodesIEEE};
\node [below=10pt of anchor4v2] (anchor4v2part) {};
\renewcommand{\=}{\hspace{-2pt} = \hspace{-2pt}}

\tikzstyle{line1v2} = [draw,line width=0.73pt, color=green!70, -latex',text=black,sloped ]
\tikzstyle{line1v5} = [draw,line width=0.34pt, color=green!70, -latex',text=black,sloped ]
\tikzstyle{line1v6} = [draw,line width=0.97pt, color=green!70, latex'-,text=black,sloped ]
\tikzstyle{line1v7} = [draw,line width=0.23pt, color=green!70, latex'-,text=black,sloped ]
\tikzstyle{line1v12} = [draw,line width=0.13pt, color=green!70, -latex',text=black,sloped ]
\tikzstyle{line2v3} = [draw,line width=0.11pt, color=green!70, -latex',text=black,sloped ]
\tikzstyle{line2v4} = [draw,line width=0.18pt, color=green!70, -latex',text=black,sloped ]
\tikzstyle{line2v5} = [draw,line width=0.04pt, color=green!70, latex'-,text=black,sloped ]
\tikzstyle{line3v4} = [draw,line width=0.01pt, color=green!70, -latex',text=black,sloped ]
\tikzstyle{line4v5} = [draw,line width=0.13pt, color=green!70, latex'-,text=black,sloped ]
\tikzstyle{line4v7} = [draw,line width=0.66pt, color=green!70, latex'-,text=black,sloped ]
\tikzstyle{line4v9} = [draw,line width=0.39pt, color=green!70, -latex',text=black,sloped ]
\tikzstyle{line5v6} = [draw,line width=0.35pt, color=green!70, latex'-,text=black,sloped ]
\tikzstyle{line6v11} = [draw,line width=0.35pt, color=green!70, -latex',text=black,sloped ]
\tikzstyle{line6v12} = [draw,line width=0.22pt, color=green!70, -latex',text=black,sloped ]
\tikzstyle{line6v13} = [draw,line width=0.53pt, color=green!70, -latex',text=black,sloped ]
\tikzstyle{line7v8} = [draw,line width=1.55pt, color=green!70, latex'-,text=black,sloped ]
\tikzstyle{line7v9} = [draw,line width=0.66pt, color=green!70, -latex',text=black,sloped ]
\tikzstyle{line9v10} = [draw,line width=0.29pt, color=green!70, -latex',text=black,sloped ]
\tikzstyle{line9v14} = [draw,line width=0.33pt, color=green!70, -latex',text=black,sloped ]
\tikzstyle{line10v11} = [draw,line width=0.0pt, color=green!70, latex'-,text=black,sloped ]
\tikzstyle{line12v13} = [draw,line width=0.05pt, color=green!70, -latex',text=black,sloped ]
\tikzstyle{line13v14} = [draw,line width=0.1pt, color=green!70, -latex',text=black,sloped ]
\draw [line1v2] (anchor1v2) -- node [below] {\tiny $\zeta\=2.16$}  (anchor2v1);
\draw [line1v5] (anchor5v1) --++ (0,-30pt) --++ (-30pt,0) -- node [below] {\tiny $\zeta\=2.16$} (anchor1v5) ;
\draw [line1v6] (anchor1v5) -- node [below] {\tiny $\zeta\=2.16$} ++ (6,0.8) --++ (0,1.3) ;
\draw [line1v7] (anchor1v7) -- node [below] {\tiny $\zeta\=2.16$} ++ (50pt,-38pt) -| (anchor7v1) ;
\draw [line1v12] (anchor1) -- node [below] {\tiny $\zeta\=2.16$} (anchor12);
\draw [line2v3] (anchor2v3) --++ (0,20pt)-- node [below] {\tiny $\zeta\=2.16$}   ++(3.2,0) --++ (0,-20pt) ;
\draw [line2v4] (anchor2v4) --++ (0,20pt) -- node [below] {\tiny $\zeta\=2.16$}  (anchor4v2part) -- (anchor4v2) ;
\draw [line2v5] (anchor2v6) -- node [below] {\tiny $\zeta\=2.16$}   ++ (0,3.7);
\draw [line3v4] (anchor3v4) -- node [below] {\tiny $\zeta\=2.16$}   ++(0,3.7);
\draw [line4v5] (anchor4v5) --++ (0,-20pt) -- node [below] {\tiny $\zeta\=2.16$}  ++(-4,0) --++ (0,20pt);
\draw [line4v7] (anchor4v7) --  node [below] {\tiny $\zeta\=2.16$} ++ (0,3.3) ;
\draw [line4v9] (anchor4v5) -- node [below] {\tiny $\zeta\=2.16$} ++ (0,5.5) ;
\draw [line5v6] (anchor5v2) -- node [below] {\tiny $\zeta\=2.16$}  ++(0,5);
\draw [line6v11] (anchor6v11) -- node [below] {\tiny $\zeta\=2.16$} ++ (0,2.5 ) ;
\draw [line6v12] (anchor6v1) -- node [below] {\tiny $\zeta\=2.16$}  (anchor12);
\draw [line6v13] (anchor6v13) -- node [below] {\tiny $\zeta\=2.16$} ++ (0,3.9) ;
\draw [line7v8] (anchor7v8) --++(0,35pt) -- node [below] {\tiny $\zeta\=2.16$}   ++(1.4,0);
\draw [line7v9] (anchor7v1) -- node [below] {\tiny $\zeta\=2.16$} ++ (0,2.2) ;
\draw [line9v10] (anchor9v4) -- node [above] {\tiny $\zeta\=2.16$} ++ (0,2.1) ;
\draw [line9v14] (anchor9v14) -- node [below] {\tiny $\zeta\=2.16$} ++ (0,3.5) ;
\draw [line10v11] (anchor10v11) --++(0,20pt) -- node [below] {\tiny $\zeta\=2.16$}  ++(-3.5,0)  --++(0,-20pt) ;
\draw [line12v13] (anchor12v13) -- node [below] {\tiny $\zeta\=2.16$}  (anchor13v12) ;
\draw [line13v14] (anchor13v14) --++(0,20pt) -- node [below] {\tiny $\zeta\=2.16$}   ++ (3.8,0) --++(0,-20pt);

\node [above= 0.2cm of G0] (tc1) {\tiny \textcolor{blue}{ $\lambda_{1} \= 3.16$} } ;
\node [ below=0.2cm of c2] (tc2) {\tiny \textcolor{blue}{ $\lambda_{2} \= 3.16$} } ;
\node [ below=0.2cm of c3] (tc3) {\tiny \textcolor{blue}{ $\lambda_{3} \= 3.16$} } ;
\node [right= 0.2cm of c4] (tc4) {\tiny \textcolor{blue}{ $\lambda_{4} \= 3.16$} } ;
\node [left= 0.2cm of c5] (tc5) {\tiny \textcolor{blue}{ $\lambda_{5} \= 3.16$} } ;
\node [left= 0.2cm of c6, yshift=0.3cm] (tc6) {\tiny \textcolor{blue}{ $\lambda_{6} \= 3.16$} } ;
\node [right= 0.2cm of g11, rotate=90] (tc8) {\tiny \textcolor{blue}{ $\lambda_{8} \= 3.16$} } ;
\node [right= 0.6cm of c9] (tc9) {\tiny \textcolor{blue}{ $\lambda_{9} \= 3.16$} } ;
\node [right= 1.2cm of c10, yshift=0.5cm] (tc10) {\tiny \textcolor{blue}{ $\lambda_{10} \= 3.16$} } ;
\node [below=0.1cm of c11,xshift=0.6cm] (tc11) {\tiny \textcolor{blue}{ $\lambda_{11} \= 3.16$} } ;
\node [above= 0.2cm of c12] (tc12) {\tiny \textcolor{blue}{ $\lambda_{12} \= 3.16$} } ;
\node [above= 0.2cm of c13] (tc13) {\tiny \textcolor{blue}{ $\lambda_{13} \= 3.16$} } ;
\node [above= 0.2cm of c14] (tc14) {\tiny \textcolor{blue}{ $\lambda_{14} \= 3.16$} } ;
\end{tikzpicture}
}
\subfloat[with $(c_{nm})_{nm}$ random, $\SW=560.51$ M\$]{
\begin{tikzpicture}[scale=0.45, >=latex,every node/.style={inner sep=0pt,outer sep=0,font=\small}]

\pic[scale=0.45, >=latex,every node/.style={inner sep=0pt,outer sep=0,font=\small}]{subnodesIEEE};

\node [below=10pt of anchor4v2] (anchor4v2part) {};
\renewcommand{\=}{\hspace{-2pt} = \hspace{-2pt}}

\tikzstyle{line1v2} = [draw,line width=4.12pt, color=green!70, -latex',text=black,sloped ]
\tikzstyle{line1v5} = [draw,line width=3.49pt, color=green!70, latex'-,text=black,sloped ]
\tikzstyle{line1v6} = [draw,line width=6.42pt, color=green!70, -latex',text=black,sloped ]
\tikzstyle{line1v7} = [draw,line width=3.85pt, color=green!70, latex'-,text=black,sloped ]
\tikzstyle{line1v12} = [draw,line width=3.2pt, color=red!70, latex'-,text=black,sloped ]
\tikzstyle{line2v3} = [draw,line width=3.6pt, color=red!70, -latex',text=black,sloped ]
\tikzstyle{line2v4} = [draw,line width=5.03pt, color=green!70, -latex',text=black,sloped ]
\tikzstyle{line2v5} = [draw,line width=5.0pt, color=red!70, latex'-,text=black,sloped ]
\tikzstyle{line3v4} = [draw,line width=3.49pt, color=green!70, -latex',text=black,sloped ]
\tikzstyle{line4v5} = [draw,line width=4.5pt, color=red!70, -latex',text=black,sloped ]
\tikzstyle{line4v7} = [draw,line width=5.5pt, color=red!70, -latex',text=black,sloped ]
\tikzstyle{line4v9} = [draw,line width=2.08pt, color=green!70, latex'-,text=black,sloped ]
\tikzstyle{line5v6} = [draw,line width=4.5pt, color=red!70, latex'-,text=black,sloped ]
\tikzstyle{line6v11} = [draw,line width=1.55pt, color=green!70, -latex',text=black,sloped ]
\tikzstyle{line6v12} = [draw,line width=2.3pt, color=green!70, -latex',text=black,sloped ]
\tikzstyle{line6v13} = [draw,line width=0.49pt, color=green!70, -latex',text=black,sloped ]
\tikzstyle{line7v8} = [draw,line width=1.55pt, color=green!70, latex'-,text=black,sloped ]
\tikzstyle{line7v9} = [draw,line width=3.2pt, color=red!70, -latex',text=black,sloped ]
\tikzstyle{line9v10} = [draw,line width=0.92pt, color=green!70, latex'-,text=black,sloped ]
\tikzstyle{line9v14} = [draw,line width=1.65pt, color=green!70, -latex',text=black,sloped ]
\tikzstyle{line10v11} = [draw,line width=1.2pt, color=red!70, latex'-,text=black,sloped ]
\tikzstyle{line12v13} = [draw,line width=1.2pt, color=red!70, latex'-,text=black,sloped ]
\tikzstyle{line13v14} = [draw,line width=1.2pt, color=red!70, latex'-,text=black,sloped ]
\draw [line1v2] (anchor1v2) -- node [below] {\tiny $\zeta\=1.49$}  (anchor2v1);
\draw [line1v5] (anchor5v1) --++ (0,-30pt) --++ (-30pt,0) -- node [below] {\tiny $\zeta\=1.49$} (anchor1v5) ;
\draw [line1v6] (anchor1v5) -- node [below] {\tiny $\zeta\=1.49$} ++ (6,0.8) --++ (0,1.3) ;
\draw [line1v7] (anchor1v7) -- node [below] {\tiny $\zeta\=1.49$} ++ (50pt,-38pt) -| (anchor7v1) ;
\draw [line1v12] (anchor1) -- node [below] {\tiny $\zeta\=1.34$} (anchor12);
\draw [line2v3] (anchor2v3) --++ (0,20pt)-- node [below] {\tiny $\zeta\=2.23$}   ++(3.2,0) --++ (0,-20pt) ;
\draw [line2v4] (anchor2v4) --++ (0,20pt) -- node [below] {\tiny $\zeta\=2.37$}  (anchor4v2part) -- (anchor4v2) ;
\draw [line2v5] (anchor2v6) -- node [below] {\tiny $\zeta\=2.08$}   ++ (0,3.7);
\draw [line3v4] (anchor3v4) -- node [below] {\tiny $\zeta\=2.99$}   ++(0,3.7);
\draw [line4v5] (anchor4v5) --++ (0,-20pt) -- node [below] {\tiny $\zeta\=2.29$}  ++(-4,0) --++ (0,20pt);
\draw [line4v7] (anchor4v7) --  node [below] {\tiny $\zeta\=2.35$} ++ (0,3.3) ;
\draw [line4v9] (anchor4v5) -- node [below] {\tiny $\zeta\=2.65$} ++ (0,5.5) ;
\draw [line5v6] (anchor5v2) -- node [below] {\tiny $\zeta\=2.33$}  ++(0,5);
\draw [line6v11] (anchor6v11) -- node [below] {\tiny $\zeta\=2.26$} ++ (0,2.5 ) ;
\draw [line6v12] (anchor6v1) -- node [below] {\tiny $\zeta\=2.71$}  (anchor12);
\draw [line6v13] (anchor6v13) -- node [below] {\tiny $\zeta\=2.38$} ++ (0,3.9) ;
\draw [line7v8] (anchor7v8) --++(0,35pt) -- node [below] {\tiny $\zeta\=2.2$}   ++(1.4,0);
\draw [line7v9] (anchor7v1) -- node [below] {\tiny $\zeta\=2.74$} ++ (0,2.2) ;
\draw [line9v10] (anchor9v4) -- node [above] {\tiny $\zeta\=2.96$} ++ (0,2.1) ;
\draw [line9v14] (anchor9v14) -- node [below] {\tiny $\zeta\=2.53$} ++ (0,3.5) ;
\draw [line10v11] (anchor10v11) --++(0,20pt) -- node [below] {\tiny $\zeta\=2.19$}  ++(-3.5,0)  --++(0,-20pt) ;
\draw [line12v13] (anchor12v13) -- node [below] {\tiny $\zeta\=2.28$}  (anchor13v12) ;
\draw [line13v14] (anchor13v14) --++(0,20pt) -- node [below] {\tiny $\zeta\=2.03$}   ++ (3.8,0) --++(0,-20pt);

\node [above= 0.2cm of G0] (tc1) {\tiny \textcolor{blue}{ $\lambda_{1} \= 2.55$} } ;
\node [ below=0.2cm of c2] (tc2) {\tiny \textcolor{blue}{ $\lambda_{2} \= 2.49$} } ;
\node [ below=0.2cm of c3] (tc3) {\tiny \textcolor{blue}{ $\lambda_{3} \= 2.99$} } ;
\node [right= 0.2cm of c4] (tc4) {\tiny \textcolor{blue}{ $\lambda_{4} \= 3.13$} } ;
\node [left= 0.2cm of c5] (tc5) {\tiny \textcolor{blue}{ $\lambda_{5} \= 3.07$} } ;
\node [left= 0.2cm of c6, yshift=0.3cm] (tc6) {\tiny \textcolor{blue}{ $\lambda_{6} \= 3.07$} } ;
\node [right= 0.2cm of g11, rotate=90] (tc8) {\tiny \textcolor{blue}{ $\lambda_{8} \= 2.98$} } ;
\node [right= 0.6cm of c9] (tc9) {\tiny \textcolor{blue}{ $\lambda_{9} \= 2.33$} } ;
\node [right= 1.2cm of c10, yshift=0.5cm] (tc10) {\tiny \textcolor{blue}{ $\lambda_{10} \= 3.44$} } ;
\node [below=0.1cm of c11,xshift=0.6cm] (tc11) {\tiny \textcolor{blue}{ $\lambda_{11} \= 3.93$} } ;
\node [above= 0.2cm of c12] (tc12) {\tiny \textcolor{blue}{ $\lambda_{12} \= 2.51$} } ;
\node [above= 0.2cm of c13] (tc13) {\tiny \textcolor{blue}{ $\lambda_{13} \= 2.74$} } ;
\node [above= 0.2cm of c14] (tc14) {\tiny \textcolor{blue}{ $\lambda_{14} \= 3.04$} } ;
\end{tikzpicture}

}
\caption{Trades [\$/MWh] at the VNE of the IEEE 14-bus network with homogeneous differentiation prices (left) and heterogeneous differentiation prices (right). \textit{With heterogeneous prices, the quantities traded are larger, and some links become congested. In the heterogeneous case, marginal trade prices $(\zeta\nm)_{n,m}$ are all equal. In the case of heterogeneous prices $(c\nm)\nm$, marginal prices $(\zeta\nm)_{n,m}$   are also heterogeneous.}}
\label{fig:res_trades_IEEE}
\end{figure}
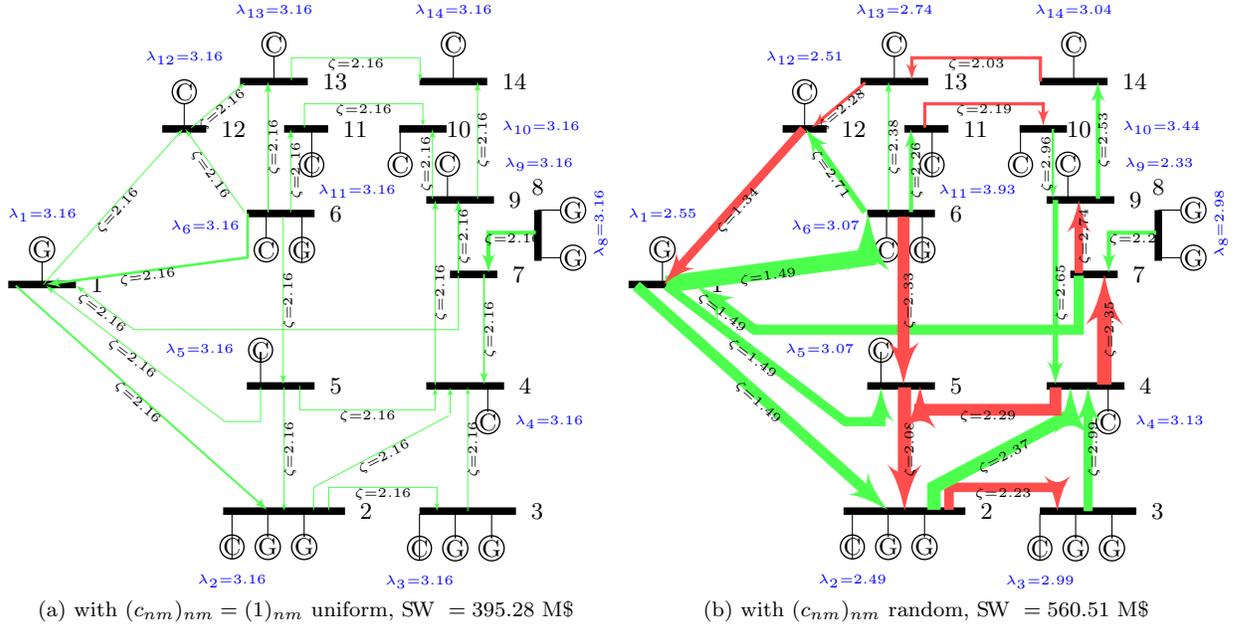
We see in \Cref{fig:res_trades_IEEE} that the differentiation prices $(c\nm)\nm$ modify completely the solution. We observe that the quantities traded in case $(b)$ are much larger. While some edges are almost unused in case (a) and no edge is congested, ten of the twenty-two edges become congested in case (b) with heterogeneous prices. This effect can be explained by Propositions \ref{prop:fullCapaTrades} and \ref{prop:preference_asymmetry}.

Also, we observe that  marginal prices $(\zeta\nm)_{n,m}$ are all equal to 2.16 \$/MWh in case (a), while they are heterogeneous in the case (b). In case (a), the equality is explained by both the absence of congestion ($\xi\nm=0$) and the equality of $(c\nm)\nm$ among users (absence of preferences).

As opposed to the reduced example with three nodes given in \Cref{subsec:3nodeEx}, it was not possible to compute a GNE different from the VE for this 14 nodes network. The approach of Nabetani et al. \cite{nabetani2009} that we used for the three node network is not possible here because of the  dimension: to search for another GNE, we have to look on a space of dimension 22, e.g., the number of lines in the network. This observation also calls for the development of algorithms not based on brute force approach, enabling an efficient approximation of the GNEs. This could be the topic of future research.

\section{Dealing with Privacy}\label{sec:privacy}

Privacy might limit the information released by the prosumers regarding their target demand and RES-based self-generation. In this section, the target demand and RES-based self-generation will be called prosumer's private information. When privacy applies, prosumers can decide to not share their private information with the other prosumers in their neighborhood. Two reasons justify this behavior: first, they might be reluctant to install intrusive and costly monitoring systems to keep track of their RES-based self-generation; second, they can  fear that other prosumers decide to sell it to aggregators that would use it for gaming on the wholesale market, impacting the prosumers' bills. In general, privacy has some impact on the utility functions of the prosumers. Indeed, based on the analytical expressions of the VNE and GNE we derived in the previous sections, each prosumer needs to know the target demands and RES-based self-generation of each prosumer in her neighborhood. In case privacy applies, these values will not be shared within the neighborhood and each prosumer would have to forecast the private information of the other prosumers to optimize her own decision variables, therefore introducing bias in her own expected utility function. In the present section, we provide an upper bound on the impact of the bias introduced by the distributed forecasts of the prosumers on their expected utility. This will give us a first quantification of the impact of privacy on the market outcome taking the point of view of the prosumers. This also constitutes a first step towards the implementation of learning methods or algorithms to approach an equilibrium (GNE)  with minimum information exchange between the prosumers. These algorithmic aspects leave an interesting direction for future research.     

\subsection{Quantifying the Loss Caused by Privacy}

We now explain in more details how each prosumer builds forecasts of target demands, RES-based generations and nodal prices, and use them to compute a biased-forecast equilibrium.

The expressions of the prosumers' demand, flexibility activation, and net imports are given in Proposition~\ref{prop:optimum}. From these expressions, each prosumer needs to compute her nodal price, which is itself based on the nodal price at the root node $\lambda_0$. 
In a centralized market clearing approach, it is the MO who determines all the nodal prices while having access to all the information of the prosumers on their target demand and RES generations. In the reality, privacy preservation rules might allow the prosumers not to share all their private information. In a peer-to-peer market design, nodal price expressions are detailed in \eqref{eq:lambdan_GNE} and \eqref{eq:lambda0_GNE}. Under \textbf{Assumptions~1, 2, 3}, to compute her nodal price, each prosumer needs to compute the nodal price at the root node $\lambda_0$, which requires to know the target demand $D_m^{\star}, m \neq n$ and RES-based generations $\Delta G_m, m \neq n$ of all the other prosumers in her neighborhood. Since this information is in general kept private by the prosumers, prosumer $n$ needs to build forecasts of her neighbors' target demand and RES-based generations. To that purpose, for each agent $n \in \mathcal{N}$, we introduce forecasts in the form of simple linear estimates:
\begin{align}
F_n(D_m^{\star}) =& D_m^{\star} + \epsilon_{nm}^D, \label{eq:biasD} \\
F_n\Big(\Delta G_m\Big) =& \Delta G_m + \epsilon_{nm}^G, \forall m \in \Omega_n, m \neq n, \label{eq:biasG}
\end{align}
where $\epsilon_{nm}^D$, $\epsilon_{nm}^G$ are the biases introduced by agent $n$ in the estimation of the demand and the RES-based generation of any agent $m$ in her neighborhood. We assume that $\epsilon_{nm}^D$, $\epsilon_{nm}^G$ are independent and identically distributed (iid) random variables that follows Gaussian density functions centered in $0$, with standard deviation $\sigma_{nm}^D$, $\sigma_{nm}^{G}$. We also set $\Delta \epsilon_{nm} := \epsilon_{nm}^D-\epsilon_{nm}^G$, as the difference between the biases introduced by agent $n$ in agent $m$ demand and RES-based generation estimations. To simplify the analytical expressions to come, let $\rho_n(\mathbf{r}) := \frac{r_n}{(\frac{1}{2\tilde{a}_0}+\frac{1}{a_0})+\sum_{m\in\Omega_0}(\frac{1}{2\tilde{a}_m}+\frac{1}{a_m})r_m}$.

Substituting Equations~\eqref{eq:biasD} and \eqref{eq:biasG} in \eqref{eq:lambda0_GNE} and \eqref{eq:lambdan_GNE}, agent $n$ obtains the following estimate for the nodal price at the root node and at her node (i.e., on her local market):
\begin{align*}
F_n(\lambda_0) =& \lambda_0 + \rho_n(\mathbf{r}) r_n^{-1} \sum_{m \in \Omega_n}\Delta \epsilon_{nm}, \\
 F_n(\lambda_n) =& \lambda_n + \rho_n(\mathbf{r}) \sum_{m \in \Omega_n} \Delta \epsilon_{nm}, \forall n \in \Omega_0.
\end{align*}

For any prosumer $n \in \mathcal{N}$, we observe that $\sum_{m \in \Omega_n}\Delta \epsilon_{nm} \rightarrow 0$ implies that $F_n(\lambda_0) \rightarrow \lambda_0$ and $F_n(\lambda_n) \rightarrow \lambda_n$. So, agent $n$ estimates of her nodal price is without bias if she makes no bias in the other agents' target demand and RES-based generation estimations, or biases in both estimates compensate each other. 

Then, by substitution of the nodal price estimates in Proposition~\ref{prop:optimum} output, we obtain the following expression for the biased-forecast equilibrium:
\begin{align}
F_n(D_n) =& D_n - \frac{1}{2\tilde{a}_n}\rho_n(\mathbf{r})\sum_{m\in\Omega_n}\Delta\epsilon_{nm}, \nonumber \\
F_n(G_n) =& G_n + \frac{1}{a_n} \rho_n(\mathbf{r}) \sum_{m \in \Omega_n} \Delta \epsilon_{nm}, \nonumber \\
F_n(Q_n) =& Q_n -(\frac{1}{2\tilde{a}_n}+\frac{1}{a_n})\rho_n(\mathbf{r}) \sum_{m\in \Omega_n}\Delta \epsilon_{nm}. \label{eq:estimatedGNE}
\end{align}

Note that in general $F_n(D_n) \neq D_n$, $F_n(G_n) \neq G_n$, and $F_n(Q_n) \neq Q_n$, i.e., the equilibrium obtained under privacy (that we called biased-forecast equilibrium) is different from the equilibrium computed under full information. In case where the sum of the error differences tends to zero, i.e., $\sum_{m\in\Omega_n}\Delta\epsilon_{nm} \rightarrow 0$, then $F_n(D_n) \rightarrow D_n$, $F_n(G_n) \rightarrow G_n$, and $F_n(Q_n) \rightarrow Q_n$.  

Then, by substitution of the biased-forecast equilibrium \eqref{eq:estimatedGNE} in agent $n$ utility function, we obtain the following bounds for her utility bias:
\begin{align}
F_n(\Pi_n) - \Pi_n \leq -\frac{1}{2}(\frac{1}{\tilde{a}_n}-\frac{1}{a_n})\rho_n(\mathbf{r})^2(\sum_{m \in \Omega_n}\Delta \epsilon_{nm})^2+\Big( \min_{m \neq n}\{c_{nm}\} (\frac{1}{2\tilde{a}_n}+\frac{1}{a_n})-\frac{b_n}{a_n}\Big)\rho_n(\mathbf{r}) \sum_{m\in \Omega_n}\Delta \epsilon_{nm}, \nonumber \\
F_n(\Pi_n) - \Pi_n \geq -\frac{1}{2}(\frac{1}{\tilde{a}_n}-\frac{1}{a_n})\rho_n(\mathbf{r})^2(\sum_{m \in \Omega_n}\Delta \epsilon_{nm})^2+\Big( \max_{m \in \Omega_n}\{c_{nm}\} (\frac{1}{2\tilde{a}_n}+\frac{1}{a_n})-\frac{b_n}{a_n}\Big)\rho_n(\mathbf{r}) \sum_{m\in \Omega_n}\Delta \epsilon_{nm}.\label{eq:boundPi}
\end{align}

Taking the expectation of $F_n(\Pi_n)-\Pi_n$ and since the expectation preserves the inequalities, we obtain the following relation from Equation~\eqref{eq:boundPi}:
\begin{equation}\label{eq:bias_gap}
\mathbb{E}\Big[F_n(\Pi_n)-\Pi_n\Big] = -\frac{1}{2}(\frac{1}{\tilde{a}_n}-\frac{1}{a_n})\rho_n(\mathbf{r})^2\sum_{m \in \Omega_n}\Big( (\sigma_{nm}^D)^2+(\sigma_{nm}^G)^2+2 Cov(\epsilon_{nm}^D,\epsilon_{nm}^G)\Big).
\end{equation}

If $\tilde{a}_n = a_n$, then $\mathbb{E}\Big[F_n(\Pi_n)-\Pi_n\Big]=0$, i.e., there is no bias in the estimation of the prosumer's utility.

To simplify the notation, we set $\beta_n := -(\frac{1}{\tilde{a}_n}-\frac{1}{a_n})\sum_{m\in \Omega_n}\Big( (\sigma_{nm}^D)^2+(\sigma_{nm}^G)^2+2Cov(\epsilon_{nm}^D,\epsilon_{nm}^G)\Big)$. 

\begin{proposition}\label{prop:biasUtility}
Assuming $\beta_n>0$ (resp. $\beta_n<0$), the bias in the prosumer $n$ estimated utility is increasing in $r_n$ (resp. decreasing in $r_n$). 
\end{proposition}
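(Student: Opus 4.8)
The plan is to reduce the statement to the monotonicity of the single scalar map $r_n \mapsto \rho_n(\mathbf{r})$, every other quantity entering the bias being constant in $r_n$. First I would rewrite Equation~\eqref{eq:bias_gap} using the definition of $\beta_n$: since
\[
\mathbb{E}\big[F_n(\Pi_n) - \Pi_n\big] = -\tfrac12\Big(\tfrac{1}{\tilde a_n} - \tfrac{1}{a_n}\Big)\rho_n(\mathbf{r})^2 \sum_{m\in\Omega_n}\big((\sigma_{nm}^D)^2 + (\sigma_{nm}^G)^2 + 2\,\mathrm{Cov}(\epsilon_{nm}^D,\epsilon_{nm}^G)\big) = \tfrac12\,\beta_n\,\rho_n(\mathbf{r})^2 ,
\]
and $\beta_n$ does not depend on $r_n$, the sign of $\partial_{r_n}\mathbb{E}\big[F_n(\Pi_n)-\Pi_n\big]$ is the sign of $\beta_n$ times the sign of $\partial_{r_n}\big(\rho_n(\mathbf{r})^2\big)$.

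Next I would show that $\rho_n(\mathbf{r})^2$ is strictly increasing in $r_n$. Because every $r_m>0$ and every $a_m,\tilde a_m>0$, the denominator of $\rho_n(\mathbf{r})$ is strictly positive, so $\rho_n(\mathbf{r})>0$ and it suffices to prove that $\rho_n(\mathbf{r})$ itself is strictly increasing in $r_n$. Isolating the $r_n$-dependence, write $\rho_n(\mathbf{r}) = r_n/(C + B\, r_n)$ with $B := \tfrac{1}{2\tilde a_n} + \tfrac{1}{a_n} > 0$ and $C := \big(\tfrac{1}{2\tilde a_0}+\tfrac{1}{a_0}\big) + \sum_{m\in\Omega_0,\, m\neq n}\big(\tfrac{1}{2\tilde a_m}+\tfrac{1}{a_m}\big) r_m > 0$, both independent of $r_n$. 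Then
\[
\frac{\partial \rho_n(\mathbf{r})}{\partial r_n} = \frac{(C + B r_n) - r_n B}{(C + B r_n)^2} = \frac{C}{(C+B r_n)^2} > 0 .
\]

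Finally I would conclude: $\rho_n(\mathbf{r})$, hence $\rho_n(\mathbf{r})^2$, is strictly increasing in $r_n$ on $(0,+\infty)$, so by the first step the expected utility bias $\tfrac12\beta_n\rho_n(\mathbf{r})^2$ is (strictly) increasing in $r_n$ when $\beta_n>0$ and (strictly) decreasing in $r_n$ when $\beta_n<0$, which is the claim. There is no genuinely hard step; the only point that requires care is the bookkeeping in the second paragraph --- correctly separating the term $\big(\tfrac{1}{2\tilde a_n}+\tfrac{1}{a_n}\big) r_n$ from the remainder of the sum in the denominator of $\rho_n$, and verifying that the residual constant $C$ is strictly positive, which follows from the standing positivity of the cost/utility parameters $a_m,\tilde a_m$ and from $r_m>0$ for all $m$.
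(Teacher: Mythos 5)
Your proof is correct and follows essentially the same route as the paper's: both reduce the claim to the sign of $\beta_n$ times the monotonicity of $\rho_n(\mathbf{r})$ in $r_n$, and both establish that monotonicity by isolating the $r_n$-term in the denominator (your quotient-rule expression $C/(C+Br_n)^2$ is exactly the paper's factorization $\tfrac{\rho_n(\mathbf{r})}{r_n}\bigl(1-\tfrac{r_n\alpha_n}{\alpha_0+\sum_{m}\alpha_m r_m}\bigr)$ written out). Your write-up is if anything cleaner, since it avoids the paper's detour through the variable $\delta_{n0}$ and makes the strict positivity of the residual constant $C$ explicit.
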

\begin{proof}
By derivation of node $n$ profit with respect to $\delta_{n0}$, we obtain:
$\frac{\partial \mathbb{E}\Big[F_n(\Pi_n)-\Pi_n\Big]}{\partial \delta_{n0}} = 
\beta_n\rho_n(\mathbf{r})\frac{\partial \rho_n(\mathbf{r})}{\partial r_n}$. 

Since $\frac{\partial \rho_n(\mathbf{r})}{\partial r_n} = \frac{\rho_n(\mathbf{r})}{r_n} \Big[ \frac{1}{\delta_{n0}}-\rho_n(\mathbf{r})(\frac{1}{2\tilde{a}_n}+\frac{1}{a_n})\Big]=\frac{\rho_n(\mathbf{r})}{r_n}   (1- \frac{r_n \alpha_n}{ \alpha_0 + \sum_{m\in\Omega_0} \alpha_m r_m }) $ with $\alpha_m := \frac{1}{2 \tilde{a}_m} + \frac{1}{ a_m}, \forall m \in \Omega_0$. By assumption $\zeta_{n0}\geq 0$ and $\zeta_{0n}\geq 0$, which imply that $r_n \geq 0$ and $\rho_n(\mathbf{r})\geq 0$. Furthermore, by definition of the usage benefit and production cost parameters $\alpha_0+\sum_{m\in\Omega_0\setminus n}\alpha_m r_m \geq 0 \Leftrightarrow 1-\frac{r_n\alpha_n}{\alpha_0+\sum_{m\in\Omega_0}\alpha_m r_m}\geq 0$. Then, depending on the sign of $\beta_n$, the conclusion follows.



\end{proof}

Proposition~\ref{prop:biasUtility} means that the prosumers may have incentives to play strategically with the valuations of the bilateral trading prices with the root node since it influence the bias in their expected utility. More precisely, $\zeta_{n0}$ smaller than $\zeta_{0n}$ will lead to small bias values; whereas $\zeta_{n0}$ larger than $\zeta_{0n}$ will lead to large bias values. In order to minimize her bias, the prosumer would choose smaller valuation for the trade with the root node than the root node would choose for similar trade. 

\begin{proposition}\label{prop:UB}
There exists an upper-bound $\Phi_n$ such that for any $\bm{D}, \bm{G}, \bm{q} \in \text{SOL}^{\text{GNEP}}$, $\left|\mathbb{E}\Big[  F_n(\Pi_n)-\Pi_n  \Big]\right| \leq \Phi_n, \forall n \in \mathcal{N}$.
\end{proposition}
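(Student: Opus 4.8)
\textbf{Proof plan for Proposition~\ref{prop:UB}.}
The plan is to reduce the statement to a uniform bound on the single equilibrium-dependent quantity appearing in Equation~\eqref{eq:bias_gap}. Write $\alpha_m \eqd \tfrac{1}{2\tilde a_m}+\tfrac{1}{a_m}>0$ for $m \in \mc{N}$, so that $\rho_n(\mathbf{r}) = \frac{r_n}{\alpha_0+\sum_{m\in\Omega_0}\alpha_m r_m}$, and recall $\beta_n \eqd -\big(\tfrac{1}{\tilde a_n}-\tfrac{1}{a_n}\big)\sum_{m\in\Omega_n}\big((\sigma_{nm}^D)^2+(\sigma_{nm}^G)^2+2\,\mathrm{Cov}(\epsilon_{nm}^D,\epsilon_{nm}^G)\big)$. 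With these notations Equation~\eqref{eq:bias_gap} reads
\[
\mathbb{E}\big[F_n(\Pi_n)-\Pi_n\big] \;=\; \tfrac12\,\beta_n\,\rho_n(\mathbf{r})^2 .
\]
The crucial observation is that $\beta_n$ is a constant of the model (it involves only the utility and production-cost parameters of node $n$ and the variances and covariance of its forecast errors) and carries no dependence on which GNE is played; the whole equilibrium dependence sits in the factor $\rho_n(\mathbf{r})^2$, where $\mathbf{r}=(r_n)_n$ is determined by the equilibrium multipliers through $\zeta_{0n}r_n=\zeta_{n0}$. Hence it suffices to bound $\rho_n(\mathbf{r})$ uniformly over $\text{SOL}^{\text{GNEP}}$.

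The second step bounds $\rho_n$. At any GNE the complementarity conditions give $\zeta_{n0}\ge 0$ and $\zeta_{0n}\ge 0$, hence $r_m\ge 0$ for every $m\in\Omega_0$, so all the terms $\alpha_0$ and $\alpha_m r_m$ in the denominator of $\rho_n$ are nonnegative and $\rho_n(\mathbf{r})\ge 0$. Keeping only the $m=n$ term in the denominator, for $r_n>0$ one gets
\[
0 \;\le\; \rho_n(\mathbf{r}) \;=\; \frac{r_n}{\alpha_0+\sum_{m\in\Omega_0}\alpha_m r_m} \;\le\; \frac{r_n}{\alpha_n r_n} \;=\; \frac{1}{\alpha_n},
\]
and $\rho_n(\mathbf{r})=0\le 1/\alpha_n$ when $r_n=0$; in the degenerate situation $\zeta_{0n}=0$ one factors $r_n^{-1}$ out of numerator and denominator of $\rho_n$ and obtains the same inequality $\rho_n\le 1/\alpha_n$ (attained in the limit $\zeta_{0n}\to 0$). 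This bound is independent of the other coordinates of $\mathbf{r}$, hence independent of the chosen equilibrium.

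Combining the two steps, for every $\bm D,\bm G,\bm q\in\text{SOL}^{\text{GNEP}}$ and every node $n\in\Omega_0$,
\[
\big|\mathbb{E}[F_n(\Pi_n)-\Pi_n]\big| \;=\; \tfrac12\,|\beta_n|\,\rho_n(\mathbf{r})^2 \;\le\; \frac{|\beta_n|}{2\,\alpha_n^{2}} \;=:\; \Phi_n ,
\]
which is the desired uniform upper bound, explicit in the model primitives; node $0$ is treated identically (its bias is governed by the analogous coefficient, which obeys the same self-normalizing inequality). The argument has essentially no hard part: the only subtlety is that the bias depends on the equilibrium only through $\mathbf{r}$, so the bound must not involve $\mathbf{r}$ — this is exactly what the self-normalizing structure $\rho_n\le 1/\alpha_n$ delivers — and that the covariance term in $\beta_n$ is sign-indefinite, which is why one works with $|\beta_n|$ throughout and states $\Phi_n$ with an absolute value.
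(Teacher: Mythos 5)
Your proof is correct and reaches the same intermediate reduction as the paper --- namely that $\left|\mathbb{E}\big[F_n(\Pi_n)-\Pi_n\big]\right| = \tfrac12\,|\beta_n|\,\rho_n(\mathbf{r})^2$, so that everything hinges on bounding $\rho_n(\mathbf{r})^2$ uniformly over $\text{SOL}^{\text{GNEP}}$ --- but the way you obtain that uniform bound is genuinely different. The paper differentiates $\rho_n(\mathbf{r})^2$ to show it is increasing in $r_n$ and decreasing in each $r_m$, $m\neq n$, then invokes the parameterized variational inequality description of the GNE set to assert that each $r_m$ ranges over a compact interval $[\underline{r}_m,\overline{r}_m]$, and sets $\Phi_n=\tfrac12|\beta_n|\,\rho_n\big(\overline{r}_n,(\underline{r}_m)_{m\neq n}\big)^2$. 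You instead exploit the self-normalizing structure of $\rho_n$: since all $r_m\geq 0$ (from the nonnegativity of the multipliers $\zeta_{m0},\zeta_{0m}$), discarding every denominator term except $\alpha_n r_n$ gives $\rho_n(\mathbf{r})\leq 1/\alpha_n$ outright, hence $\Phi_n=|\beta_n|/(2\alpha_n^2)$. Your route buys something real: it does not depend on the finiteness of $\overline{r}_n$, which the paper asserts by citation to Nabetani et al.\ but does not actually establish; note that if $\overline{r}_n=+\infty$ the paper's formula degenerates, by the very monotonicity it proves, precisely to your constant $1/\alpha_n^2$. Conversely, when the interval endpoints are known and finite, the paper's $\Phi_n$ is tighter than yours. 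Both arguments share the same residual loose ends --- the degenerate case $\zeta_{0n}=0$ where $r_n$ is ill-defined, and the root node $n=0$ for which $\rho_n$ is not literally defined --- which you at least flag and handle explicitly while the paper passes over them in silence.
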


\begin{proof}

Taking the absolute value of the expectation of the difference between the estimated and the true utilities, we observe that 
$\left|\mathbb{E}\Big[F_n(\Pi_n)-\Pi_n \Big] \right| \leq \mathbb{E}\Big[ \left|F_n(\Pi_n)-\Pi_n\right| \Big] \leq \frac{1}{2}\left|\beta_n\right|\rho_n(\mathbf{r})^2$. 

Derivating $\rho_n(\mathbf{r})^2$ with respect to $r_n$, we obtain $\frac{\partial \rho_n(\mathbf{r})^2}{\partial r_n} = 2\frac{\rho_n(\mathbf{r})^2}{r_n}(1-\frac{r_n\alpha_n}{\alpha_0+\sum_{m\in\Omega_0}\alpha_m r_m}) \geq 0 $, which implies that $\rho_n(\mathbf{r})^2$ is increasing in $r_n$. Derivating $\rho_n(\mathbf{r})^2$ with respect to $r_m, m \neq n$, we obtain $\frac{\partial \rho_n(\mathbf{r})^2}{\partial r_m} = 2\rho_n(\mathbf{r})\Big[ -\frac{\rho_n(\mathbf{r})}{r_n}(\frac{1}{2\tilde{a}_m}+\frac{1}{a_m})\Big] \leq 0 $, which implies that $\rho_n(\mathbf{r})^2$ is decreasing in $r_m, m \neq n$. 

According to the parameterized variational inequality approach of Nabetani et al. \cite{nabetani2009}, the GNE set $\bm{D}, \bm{G}, \bm{q} \in \text{SOL}^{\text{GNEP}}$ can be described by making the valuation ratio $r_n$ span values in a certain interval, i.e., $\underline{r}_n\leq r_n \leq \overline{r}_n$, for any $n \in \Omega_0$.

Based on the variational analysis of $\rho_n(\mathbf{r})^2$, we conclude that $\Phi_n := \frac{1}{2}\left|\beta_n\right|\rho_n\Big(\overline{r}_n,(\underline{r}_{m})_{m\neq n}\Big)^2$.

\end{proof}

\subsection{Dealing with Privacy in the Three Nodes Network}

We still consider the three nodes example introduced in Subsection~\ref{ex:ex1}. The demand and the RES-based generation errors in the estimations are generated according to Gaussian density functions centered in $0$, with standard deviations:
\begin{equation*}
\bm{\sigma}^D=\left({\begin{array}{ccc} 0.8 & 0.2 & 0.2 \\
0.3 & 0.8 & 0.8 \\
0.8 & 0.1 & 0.3
\end{array}}\right) ,\ \ \bm{\sigma}^G=\left({\begin{array}{ccc} 0.0 & 0.2 & 0.5 \\
0.0 & 0.3 & 0.5 \\
0.0 & 0.8 & 0.10
\end{array}}\right), \text{ and }  \bm{Cov}=\left({\begin{array}{ccc} 0.4 & -0.2 & -0.3 \\
-0.8 & -1.0 & 0.5 \\
1.0 & 0.0 & 1.0
\end{array}}\right) \ . 
\end{equation*}
In Figure~\ref{fig:Bias}, we have represented the sum of the upper bounds on the estimated utility bias $\Phi_1+\Phi_2$ as a function of the prosumer utility parameters $\tilde{a}_1, \tilde{a}_2$. We have assumed that the maximum usage benefit is the same on nodes $1$ and $2$, leading to $\tilde{b}_1=\tilde{b}_2=60$. Based on the prosumer utility definition, the targeted demand is inversely proportional to the usage benefit parameter, indeed $D_n^*=\sqrt{\frac{\tilde{b}_n}{\tilde{a}_n}}$. We observe that depending on the utility parameters, the social welfare bias varies between $1.2 \%$ and $3.6 \%$. Furthermore, the bias is minimal when both prosumers have identical utility parameters (or equivalently, same target demands), and maximized when the utility parameters are asymmetric (one having a large target demand and the other a small one).

\begin{figure}[ht!]
\begin{center}
\includegraphics[scale=0.65]{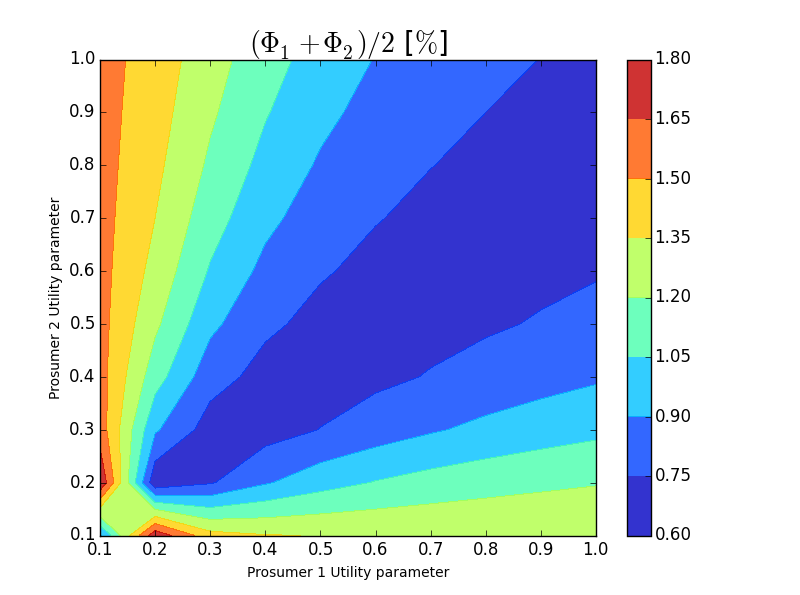}    
\caption{\small{Impact of the prosumer utility parameters $\tilde{a}_1, \tilde{a}_2$ on the social welfare bias.}}
\label{fig:Bias}
\end{center}
\end{figure}

\section{Conclusion}\label{sec:conclusion}

On a radial network where nodes are made of prosumers with differentiation price preferences, we formulate two market designs: \textbf{(i)} a centralized market design used as a benchmark, where a global market operator optimizes the flows (trades) and bilateral trading prices between the nodes to maximize the system overall social welfare; \textbf{(ii)} a fully distributed peer-to-peer market design where prosumers in local energy communities optimize selfishly the trades, demand, and flexibility activation in presence of private information. We characterize the solution of the peer-to-peer market as a Variational Equilibrium, without price arbitrage, and prove that the set of Variational Equilibria coincides with the set of social welfare optima solutions of market design \textbf{(i)}. 
In the presence of market imperfections, we propose a reformulation of the Generalized Nash Equilibrium Problem \textbf{(ii)} relying on the parameterized variational inequation approach of Nabetani et al. enabling the computation of the set of Generalized Nash Equilibria.
We also characterize formally the impact of preferences on the network line congestion and energy waste under both designs. The results are illustrated in two test cases (a three nodes network and the IEEE 14-bus network). We also provide a bound on the Price of Anarchy capturing the loss of efficiency caused by market imperfections in the three nodes example. We show that the impact of privacy on the social welfare bias is smaller than $3.6 \%$. 
Based on these performance analysis and numerical results, we conclude that peer-to-peer market design gives rise to similar performance than the classical centralized market design provided market imperfections (resulting from the lack of coordination, insufficient market liquidity, information asymmetry resulting from privacy) can be corrected, and constitutes a relevant evolution for power system operation as it promises more robustness and resilience. Indeed, as the information and decisions are not optimized by a  central single entity,  in case of failure or if one node is attacked, the power system can still rely on the other nodes. Besides, as all prosumers are  involved, they have the ability to adapt their actions to the state of grid. 

\newcommand{\url}[1]{\textcolor{blue}{#1}}

\end{document}